\newtheoremstyle{mystyle}
	{\baselineskip}
	{\baselineskip}
	{\itshape}
	{}
	{\bfseries}
	{.}
	{1em}
	{}
\theoremstyle{mystyle}
	\newtheorem{thm}{Theorem}[section]
	\newtheorem{lem}[thm]{Lemma}
	\newtheorem{prop}[thm]{Proposition}
\newtheoremstyle{mystyle2}
	{\baselineskip}
	{\baselineskip}
	{\normalfont}
	{}
	{\bfseries}
	{.}
	{1em}
	{}
\theoremstyle{mystyle2}
	\newtheorem{rem}[thm]{Remark}
	\newtheorem{definition}[thm]{Definition}
\newcommand{\C}{\mathbb{C}}
\newcommand{\R}{\mathbb{R}}
\newcommand{\Z}{\mathbb{Z}}
\renewcommand{\AA}{\mathcal{A}}
\newcommand{\MM}{\mathcal{M}}
\newcommand{\abs}[1]{\left\lvert #1 \right\rvert}
\newcommand{\norm}[1]{\left\lVert #1 \right\rVert}
\DeclareMathOperator{\re}{Re}
\DeclareMathOperator{\im}{Im}
\numberwithin{equation}{section}
\numberwithin{table}{section}
\numberwithin{figure}{section}
\begin{document}
\title{
	Asymptotic behavior of global solutions to the complex Ginzburg--Landau type equation in the super Fujita-critical case
}

\author{
	Ryunosuke Kusaba$^{*1}$ and Tohru Ozawa$^{2}$
\bigskip \\
	$^{1}$Department of Pure and Applied Physics, \\
	Graduate School of Advanced Science and Engineering, \\
	Waseda University, 3-4-1 Okubo, Shinjuku-ku, Tokyo 169-8555, JAPAN
\bigskip \\
	$^{2}$Department of Applied Physics, \\
	Waseda University, 3-4-1 Okubo, Shinjuku-ku, Tokyo 169-8555, JAPAN
}

\date{}

\footnotetext[1]{$^{*}$Corresponding author: Ryunosuke Kusaba (e-mail: \texttt{ryu2411501@akane.waseda.jp})}

\maketitle
\begin{abstract}
	We present weighted estimates and higher order asymptotic expansions of global solutions to the complex Ginzburg--Landau (CGL) type equation in the super Fujita-critical case.
	Our approach is based on commutation relations between the CGL semigroup and monomial weights in $\R^{n}$ for the weighted estimates and on the Taylor expansions with respect to the both space and time variables for the asymptotic expansions.
	We also characterize the optimal decay rate in time of the remainder for the asymptotic expansion from the viewpoint of the moments of the initial data in space and those of the nonlinear term in spacetime.
\end{abstract}

\bigskip\noindent
\textbf{Keywords}: Complex Ginzburg--Landau equation, power-type nonlinearity, weighted estimate, large time behavior, asymptotic expansion.

\bigskip\noindent
\textbf{2020 Mathematics Subject Classification}: 35Q56, 35B40, 35C20.


\newpage
\section{Introduction}

We consider the asymptotic behavior of global solutions to the following Cauchy problem for the complex Ginzburg--Landau (CGL) type equation of the form
\begin{align*}
	\tag{P} \label{P}
	\begin{cases}
		\partial_{t} u- \nu \Delta u=f \left( u \right), &\qquad \left( t, x \right) \in \left( 0, + \infty \right) \times \R^{n}, \\
		u \left( 0 \right) =u_{0}, &\qquad x \in \R^{n},
	\end{cases}
\end{align*}
where $u \colon \left[ 0, + \infty \right) \times \R^{n} \to \C$ is an unknown function, $u_{0} \colon \R^{n} \to \C$ is a given data at $t=0$, $\nu \in \C$ with $\re \nu >0$ is a given parameter, and $f \colon \C \to \C$ is a nonlinear term satisfying $f \left( 0 \right) =0$ and
\begin{align*}
	\abs{f \left( \xi \right) -f \left( \eta \right)} \leq K \left( \abs{\xi}^{p-1} + \abs{\eta}^{p-1} \right) \abs{\xi - \eta}, \qquad \forall \xi, \eta \in \C
\end{align*}
for some $K>0$ and $p \in \left( 1, + \infty \right)$.
This nonlinearity is a natural extension of a single power of order $p$ such as
\begin{align*}
	f \left( \xi \right) = \lambda \xi^{p}, {\ }\lambda \abs{\xi}^{p}, {\ } \lambda \abs{\xi}^{p-1} \xi
\end{align*}
for some $\lambda \in \C \setminus \left\{ 0 \right\}$.

The complex Ginzburg--Landau equation
\begin{align}
	\label{eq:CGL}
	\partial_{t} u- \nu \Delta u= \mu u+ \lambda \abs{u}^{p-1} u
\end{align}
with $\mu \geq 0$ and $\re \lambda <0$ describes a wide variety of natural phenomena such as superconductivity and pattern formation (cf. \cite{Aranson-Kramer}).
It is also derived from the non-relativistic limit of a scalar field equation in homogeneous and isotropic spacetimes such as the de Sitter spacetime \cite{Nakamura}.
Therefore, the complex Ginzburg--Landau equation is said to be one of the most important equations in Physics.
From the viewpoint of mathematics, on one hand, if $\im \nu = \im \lambda = \mu =0$ and $u$ is a real-valued unknown function, then \eqref{eq:CGL} turns into the nonlinear heat equation, called the Fujita-type equation.
On the other hand, if $\re \nu = \re \lambda = \mu =0$, then \eqref{eq:CGL} becomes the nonlinear Schr\"{o}dinger equation.
Thus, the complex Ginzburg--Landau equation can be regarded as an intermediate equation describing the interaction between dissipation and dispersion.
In terms of both parabolic and dispersive equations, it has been studied in a large literature.
We refer the readers to \cite{Yang, Ginibre-Velo1996, Ginibre-Velo1997, Okazawa-Yokota2002, Okazawa, Yokota-Okazawa2006, Matsumoto-Tanaka2008, Yokota-Okazawa2008, Matsumoto-Tanaka2010, Okazawa-Yokota2010, Clement-Okazawa-Sobajima-Yokota, Shimotsuma-Yokota-Yoshii2014, Shimotsuma-Yokota-Yoshii2016} for the local and global well-posedness, \cite{Cazenave-Dickstein-Weissler, Cazenave-Dias-Figueira, Cazenave-Snoussi, Ikeda-Sobajima, Tomidokoro-Yokota} for the blowing-up of solutions in finite time, \cite{Machihara-Nakamura, Ogawa-Yokota} for the vanishing viscosity limit $\left( \re \nu \searrow 0 \right)$, and the references therein.
We notice that there has been a recent attention on complex-valued solutions to the Fujita-type equation \cite{Guo-Ninomiya-Shimojo-Yanagida, Chouichi-Otsmane-Tayachi, Nouaili-Zaag, Harada2016, Harada2017, Chouichi-Majdoub-Tayachi, Duong2019_1, Duong2019_2, Chen-Wang-Wang}.

In this paper, we are concerned with the complex Ginzburg--Landau ``type'' equation which is a special case of the parameter $\mu$ and a generalized case of the nonlinear term in \eqref{eq:CGL}.
In particular, we focus on the large time behavior of global solutions to \eqref{P}.
Similar to the case of the Fujita-type equation, the large time behavior of solutions to \eqref{P} changes depending on the size of the exponent $p$ relative to the exponent $1+2/n$, called the Fujita exponent.
In fact, the authors in \cite{Hayashi-Kaikina-Naumkin2003_1, Hayashi-Kaikina-Naumkin2003_2} showed that small amplitude solutions to \eqref{P} with $f \left( \xi \right) = \lambda \abs{\xi}^{p-1} \xi$ decay faster than the solution to the free case, namely, \eqref{P} with $f \equiv 0$, in the case where $p \leq 1+2/n$ (precisely, $p$ is equal to or sufficiently close to $1+2/n$) and
\begin{align}
	\label{eq:absorption}
	\re \left( \frac{\lambda \abs{\nu}^{n- \frac{n}{2} \left( p-1 \right)}}{\left( \left( p+1 \right) \lvert \nu \rvert^{2} + \left( p-1 \right) \nu^{2} \right)^{n/2}} \right) <0.
\end{align}
They also derived asymptotic profiles for the small amplitude solutions to \eqref{P} under the same condition.
Since \eqref{eq:absorption} is equivalent to $\lambda <0$ if $\im \nu = \im \lambda =0$, we can regard these results as extensions of those for the Fujita-type equation with absorption (see \cite{Gmira-Veron, Cazenave-Dickstein-Escobedo-Weissler} and the references therein).
However, due to the complex scalar field and the dispersion in \eqref{P}, the comparison principle, which is one of the most basic tools to study parabolic equations, is not available.
Therefore, we need to find the asymptotic profiles more directly.
For the case where $p>1+2/n$, the authors in \cite{Nakamura-Sato} showed that small amplitude solutions to \eqref{P} decay as fast as the free solution and asymptotically approach a constant multiple of the fundamental solution $G_{t \nu}$ to \eqref{P} with $f \equiv 0$.
However, they did not obtain the decay rate of the remainder and higher order asymptotic expansions.

The higher order asymptotic expansions for the semilinear heat equations were established in \cite{Ishige-Ishiwata-Kawakami, Ishige-Kawakami2012, Ishige-Kawakami2013, Ishige-Kawakami-Kobayashi}.
The authors effectively used the fact that the heat semigroup obtains $L^{1}$-decay if the initial data has vanishing moments up to some order.
Based on this fact, they decomposed the initial data and the nonlinear term according to the vanishing order of their moments, and then derived the higher order asymptotic expansions regarding the term with vanishing moments as the remainder and the other as the asymptotic profile.
This method is also available for other parabolic equations \cite{Kawakami, Ishige-Kawakami-Michihisa} and for the semilinear damped wave equation \cite{Kawakami-Ueda, Kawakami-Takeda}.
However, due to the decomposition, the asymptotic profiles seem to be too complicated to deduce the optimality for the decay rates of the remainders.
In particular, it may be impossible to derive estimates of the remainders from below.

Based on these points of view, we are naturally led to the purpose in two directions.
The first is to establish higher order asymptotic expansions of global solutions to \eqref{P} in the super Fujita-critical case: $p>1+2/n$.
The second is to derive the decay estimates in time of the remainders and their optimality.

Before we state our main results, we introduce some notation.
For each $q \in \left[ 1, + \infty \right]$, let $L^{q} \left( \R^{n} \right) =L^{q} \left( \R^{n}; \C \right)$ denote the standard Lebesgue space with the norm denoted by $\norm{\,\cdot\,}_{q}$.
We also employ the weighted $L^{1}$-space defined by
\begin{align*}
	L_{m}^{1} \left( \R^{n} \right) \coloneqq \left\{ \varphi \in L^{1} \left( \R^{n} \right); {\,} \text{$x^{\alpha} \varphi \in L^{1} \left( \R^{n} \right)$ for all $\alpha \in \Z_{\geq 0}^{n}$ with $\abs{\alpha} \leq m$} \right\}
\end{align*}
for each $m \in \Z_{>0}$, where $x^{\alpha} \varphi$ means the function $\R^{n} \ni x \mapsto x^{\alpha} \varphi \left( x \right) \in \C$.
We define the CGL semigroup $\left( e^{t \nu \Delta}; t \geq 0 \right)$ by
\begin{align*}
	e^{t \nu \Delta} \varphi \coloneqq \begin{cases}
		G_{t \nu} \ast \varphi, &\qquad t>0, \\
		\varphi, &\qquad t=0
	\end{cases}
\end{align*}
for $\varphi \in L^{q} \left( \R^{n} \right)$ with $q \in \left[ 1, + \infty \right]$, where $G_{t \nu} \colon \R^{n} \to \C$ is the fundamental solution to \eqref{P} with $f \equiv 0$ given by
\begin{align*}
	G_{t \nu} \left( x \right) = \left( 4 \pi t \nu \right)^{- \frac{n}{2}} \exp \left( - \frac{\abs{x}^{2}}{4 t \nu} \right), \qquad x \in \R^{n}
\end{align*}
and $\ast$ is the convolution in $\R^{n}$.

We first give the definition of global solutions to \eqref{P}.

\begin{definition}
	Let $u_{0} \in \left( L^{1} \cap L^{\infty} \right) \left( \R^{n} \right)$.
	A function
	\begin{align*}
		u \in X \coloneqq \left( C \cap L^{\infty} \right) \left( \left[ 0, + \infty \right); L^{1} \left( \R^{n} \right) \right) \cap \left( C \cap L^{\infty} \right) \left( \left( 0, + \infty \right); L^{\infty} \left( \R^{n} \right) \right)
	\end{align*}
	is said to be a global (mild) solution to \eqref{P} if the integral equation
	\begin{align*}
		\tag{I} \label{I}
		u \left( t \right) =e^{t \nu \Delta} u_{0} + \int_{0}^{t} e^{\left( t-s \right) \nu \Delta} f \left( u \left( s \right) \right) ds
	\end{align*}
	holds in $\left( L^{1} \cap L^{\infty} \right) \left( \R^{n} \right)$ for all $t>0$.
\end{definition}

In what follows, we basically assume that $p>1+2/n$ and $u_{0} \in \left( L^{1} \cap L^{\infty} \right) \left( \R^{n} \right)$.
We also suppose that there exists a global solution $u \in X$ to \eqref{P} satisfying
\begin{align}
	\label{eq:P_decay}
	\sup_{q \in \left[ 1, + \infty \right]} \sup_{t>0} t^{\frac{n}{2} \left( 1- \frac{1}{q} \right)} \norm{u \left( t \right)}_{q} <+ \infty.
\end{align}
The above estimate implies that the global solution decays as fast as the free solution.
Such a global solution actually exists at least for small initial data, which can be shown by the standard contraction argument for \eqref{I} (cf. \cite[Appendix A]{Kusaba-Ozawa}).
We emphasize that we assume not the smallness of the initial data but the decay in time of the global solution.
We also remark that in the case of the Fujita-type equation with absorption, we can construct a real-valued global solution satisfying \eqref{eq:P_decay} even for large initial data by virtue of the comparison principle (see \cite{Ishige-Kawakami2012} and the references therein).
In this paper, we often use the estimate
\begin{align}
	\label{eq:P_decay2}
	\sup_{q \in \left[ 1, + \infty \right]} \sup_{t>0} \left( 1+t \right)^{\frac{n}{2} \left( 1- \frac{1}{q} \right)} \norm{u \left( t \right)}_{q} <+ \infty,
\end{align}
which follows from $u \in L^{\infty} \left( 0, + \infty; \left( L^{1} \cap L^{\infty} \right) \left( \R^{n} \right) \right)$ and \eqref{eq:P_decay}.

The following weighted estimates of the global solution to \eqref{P} are important to derive asymptotic expansions.

\begin{thm} \label{th:P_weight}
	Let $p>1+2/n$ and let $m \in \Z_{>0}$.
	Let $u_{0} \in \left( L_{m}^{1} \cap L^{\infty} \right) \left( \R^{n} \right)$ and let $u \in X$ be a global solution to \eqref{P} satisfying \eqref{eq:P_decay}.
	Then, $u \in C \left( \left[ 0, + \infty \right); L_{m}^{1} \left( \R^{n} \right) \right)$, and moreover there exists $C>0$ such that the estimate
	\begin{align*}
		\sum_{\abs{\alpha} =m} \norm{x^{\alpha} u \left( t \right)}_{1} \leq C \left( 1+t^{\frac{m}{2}} \right)
	\end{align*}
	holds for all $t>0$.
\end{thm}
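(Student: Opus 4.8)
The plan is to apply the monomial weight $x^{\alpha}$ with $\abs{\alpha}=m$ to the integral equation \eqref{I} and to transfer the weight onto the Gaussian kernel rather than onto $u_{0}$ or $f(u)$, so that no spatial regularity of the data or of the nonlinearity is required. The two ingredients are: (i) the identity obtained by inserting $x^{\alpha}=((x-y)+y)^{\alpha}$ into $\int G_{t\nu}(x-y)\varphi(y)\,dy$ and expanding by the binomial formula,
\begin{align*}
	x^{\alpha}\left(e^{t\nu\Delta}\varphi\right)=\sum_{\beta\leq\alpha}\binom{\alpha}{\beta}\left(\left(\,\cdot\,\right)^{\alpha-\beta}G_{t\nu}\right)\ast\left(x^{\beta}\varphi\right),
\end{align*}
where $(\,\cdot\,)^{\gamma}G_{t\nu}$ denotes the function $z\mapsto z^{\gamma}G_{t\nu}(z)$; and (ii) the moment bound $\norm{\left(\,\cdot\,\right)^{\gamma}G_{t\nu}}_{1}\leq C\,t^{\abs{\gamma}/2}$, which follows immediately from the scaling $x=\sqrt{t}\,\xi$ in the defining Gaussian integral. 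Throughout I write $M_{k}(t):=\sum_{\abs{\gamma}=k}\norm{x^{\gamma}u(t)}_{1}$ and argue by induction on $m$, the case $m=0$ being $M_{0}(t)=\norm{u(t)}_{1}\leq C$ from \eqref{eq:P_decay2} together with $u\in C([0,+\infty);L^{1}(\R^{n}))$.

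For the linear part, Young's inequality and the kernel moment bound give
\begin{align*}
	\norm{x^{\alpha}e^{t\nu\Delta}u_{0}}_{1}\leq\sum_{\beta\leq\alpha}\binom{\alpha}{\beta}\norm{\left(\,\cdot\,\right)^{\alpha-\beta}G_{t\nu}}_{1}\norm{x^{\beta}u_{0}}_{1}\leq C\sum_{\beta\leq\alpha}t^{(m-\abs{\beta})/2}\norm{x^{\beta}u_{0}}_{1},
\end{align*}
which is bounded by $C(1+t^{m/2})$ precisely because $u_{0}\in L_{m}^{1}(\R^{n})$, the top power $t^{m/2}$ arising from $\beta=0$. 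This already produces the claimed shape of the estimate and clarifies the role of the hypothesis on $u_{0}$.

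For the nonlinear part I apply the same expansion to $e^{(t-s)\nu\Delta}f(u(s))$. Since $f(0)=0$, the hypothesis on $f$ yields $\abs{f(\xi)}\leq K\abs{\xi}^{p}$, whence, using $\abs{x^{\beta}}\leq\abs{x}^{\abs{\beta}}\leq C\sum_{\abs{\gamma}=\abs{\beta}}\abs{x^{\gamma}}$ and the decay $\norm{u(s)}_{\infty}\leq C(1+s)^{-n/2}$ from \eqref{eq:P_decay2},
\begin{align*}
	\norm{x^{\beta}f(u(s))}_{1}\leq K\norm{u(s)}_{\infty}^{p-1}\norm{\,\abs{x}^{\abs{\beta}}u(s)}_{1}\leq C(1+s)^{-\frac{n}{2}(p-1)}M_{\abs{\beta}}(s).
\end{align*}
Writing $a:=\tfrac{n}{2}(p-1)>1$ (this is exactly the super Fujita-critical assumption) and combining with the kernel moment bound, the nonlinear contribution to $M_{m}(t)$ is dominated by
\begin{align*}
	C\sum_{j=0}^{m}\int_{0}^{t}(t-s)^{(m-j)/2}(1+s)^{-a}M_{j}(s)\,ds.
\end{align*}
For $j<m$ the induction hypothesis $M_{j}(s)\leq C(1+s^{j/2})$, the bound $(t-s)^{(m-j)/2}\leq t^{(m-j)/2}$, and an elementary case distinction on the convergence of $\int_{0}^{t}(1+s)^{-a}s^{j/2}\,ds$ show that each such term is $\leq C(1+t^{m/2})$; the worst power $t^{m/2+1-a}$ that can occur is harmless exactly because $a>1$. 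The remaining term $j=m$ carries the factor $(t-s)^{0}=1$ and reproduces $M_{m}$ itself.

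Collecting the estimates gives the closed integral inequality
\begin{align*}
	M_{m}(t)\leq C(1+t^{m/2})+C\int_{0}^{t}(1+s)^{-a}M_{m}(s)\,ds,
\end{align*}
and, since $\int_{0}^{+\infty}(1+s)^{-a}\,ds<+\infty$ by $a>1$, Gronwall's inequality yields the desired bound $M_{m}(t)\leq C(1+t^{m/2})$. I expect the main obstacle to lie not in this formal computation but in its rigorous justification: Gronwall's lemma requires $M_{m}(t)$ to be finite and locally bounded \emph{a priori}, and the continuity statement $u\in C([0,+\infty);L_{m}^{1}(\R^{n}))$ must be established separately. I would obtain the qualitative finiteness by running the entire argument with $x^{\alpha}$ replaced by a bounded, $R$-truncated weight (for which all weighted norms are trivially finite), checking that the resulting bounds are uniform in $R$, and then letting $R\to+\infty$ by monotone convergence; the time continuity then follows from \eqref{I} and dominated convergence once the moments are known to be finite. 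Controlling the extra commutator terms generated by the cutoff, and verifying that they are of lower order, is where the genuine technical care is needed.
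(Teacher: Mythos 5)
Your formal computation is correct, and it takes a genuinely different route from the paper. You transfer the weight onto the kernel via $x^{\alpha}=\left(\left(x-y\right)+y\right)^{\alpha}$ and the moment bound $\norm{\left(\,\cdot\,\right)^{\gamma}G_{t\nu}}_{1}\leq Ct^{\abs{\gamma}/2}$; this is essentially the elementary estimate that the paper records as \eqref{eq:CGL_weight} and attributes to ``a simple calculation'', and, combined with \eqref{eq:P_decay2} and Gr\"{o}nwall, it does reproduce the stated bound, with the super Fujita-critical hypothesis entering only through $a=\frac{n}{2}\left(p-1\right)>1$. The paper instead builds the proof on the commutation relations $\left[x^{\alpha},e^{t\nu\Delta}\right]\varphi=R_{\alpha}\left(t\right)\varphi$ of Theorems \ref{th:CGL_commutator} and \ref{th:CGL_commutator_esti}, whose point is that $R_{\alpha}\left(t\right)\varphi$ contains no weight of top order $m$, together with the Gaussian-damped approximations $w_{j,\varepsilon}\left(x\right)=x_{j}e^{-\varepsilon\abs{x}^{2}}$ and Lemma \ref{lem:CGL_weight_appro}. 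What that machinery buys is a sharper estimate (only $\norm{\lvert x\rvert^{m-1}\varphi}_{1}$ on the right-hand side rather than $\norm{\abs{x}^{m}\varphi}_{1}$), a representation of $x^{\alpha}u\left(t\right)$ from which continuity in $L^{1}_{m}\left(\R^{n}\right)$ follows directly, and a truncation scheme that demonstrably closes; what your route buys is brevity, and, if set up as indicated below, it even avoids the induction on $m$.

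The one point where your proposal as written would fail is the step you yourself flag: legitimizing Gr\"{o}nwall. The failure mode is not ``commutator terms generated by the cutoff''. If you multiply by a cutoff $\phi_{R}$, the dangerous Duhamel contribution is the term $\beta=\alpha$: you would need
\begin{align*}
	\norm{\phi_{R}\left(G_{\left(t-s\right)\nu}\ast\left(y^{\alpha}f\left(u\left(s\right)\right)\right)\right)}_{1}\leq C\norm{\phi_{R}\,y^{\alpha}f\left(u\left(s\right)\right)}_{1}+\text{lower order},
\end{align*}
but convolution transports mass from the region where $\phi_{R}$ vanishes into the region where $\phi_{R}=1$, so the only available bound involves the \emph{untruncated} moment $\norm{y^{\alpha}f\left(u\left(s\right)\right)}_{1}$, which is exactly what is not yet known to be finite; the inequality does not close in the truncated quantity. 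The repair inside your own framework is to truncate the weight itself rather than cut it off: take $w_{R}\left(x\right)=\min\left(\abs{x},R\right)^{m}$, which satisfies the $R$-uniform subadditivity $w_{R}\left(x\right)\leq2^{m-1}\left(w_{R}\left(x-y\right)+w_{R}\left(y\right)\right)$, so that
\begin{align*}
	\norm{w_{R}\left(G_{t\nu}\ast g\right)}_{1}\leq2^{m-1}\left(\norm{w_{R}G_{t\nu}}_{1}\norm{g}_{1}+\norm{G_{t\nu}}_{1}\norm{w_{R}g}_{1}\right),\qquad\norm{w_{R}G_{t\nu}}_{1}\leq Ct^{\frac{m}{2}},
\end{align*}
with $C$ independent of $R$. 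Then $M^{R}\left(t\right)\coloneqq\norm{w_{R}u\left(t\right)}_{1}\leq R^{m}\norm{u\left(t\right)}_{1}$ is finite and continuous in $t$, the closed inequality $M^{R}\left(t\right)\leq C\left(1+t^{m/2}\right)+C\int_{0}^{t}\left(1+s\right)^{-a}M^{R}\left(s\right)ds$ holds with constants independent of $R$, and Gr\"{o}nwall followed by monotone convergence gives $\norm{\abs{x}^{m}u\left(t\right)}_{1}\leq C\left(1+t^{m/2}\right)$, hence the theorem, since $\abs{x}^{m}$ and $\sum_{\abs{\alpha}=m}\abs{x^{\alpha}}$ are comparable. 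Finally, the continuity statement $u\in C\left(\left[0,+\infty\right);L^{1}_{m}\left(\R^{n}\right)\right)$ does not follow from ``\eqref{I} and dominated convergence'' alone: once the moments are finite you should write out the binomial representation of $x^{\alpha}u\left(t\right)$ and use the $L^{1}$-continuity of $t\mapsto\left(\,\cdot\,\right)^{\gamma}G_{t\nu}$, Young's inequality, and the approximate-identity property of $G_{t\nu}$ at $t=0$; this is routine but is a genuine argument, and it is the analogue of what the paper extracts from its commutator representation at the end of its proof.
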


As we mentioned above, the comparison principle for parabolic equations is not available to show Theorem \ref{th:P_weight} due to the complex scalar field and the dispersion in \eqref{P}.
To overcome this difficulty, we establish commutator estimates between the CGL semigroup and monomial weights in $\R^{n}$ introduced in \cite{Kusaba-Ozawa} for the heat semigroup.
For details, see Theorems \ref{th:CGL_commutator} and \ref{th:CGL_commutator_esti} in Section \ref{sec:CGL_weight}.
This method enables us to show Theorem \ref{th:P_weight} by direct and explicit computations in the framework of the integral equation \eqref{I} without any approximation of the global solution.
Hence, we can discuss the well-posedness and a priori estimates for \eqref{P} independently.
Taking into account the fact that some papers constructed global solutions to \eqref{P} by some compactness argument and proved the uniqueness of solutions separately (see \cite{Ginibre-Velo1996, Clement-Okazawa-Sobajima-Yokota} and the references therein), we can ignore the uniqueness problem in the asymptotic analysis.

The next theorem provides asymptotic expansions of the global solution to \eqref{P}.

\begin{thm} \label{th:P_asymptotics_lim}
	Let $m \in \Z_{\geq 0}$ and let $p>1+ \left( m+2 \right) /n$.
	Let $u_{0} \in \left( L^{1}_{m} \cap L^{\infty} \right) \left( \R^{n} \right)$ and let $u \in X$ be a global solution to \eqref{P} satisfying \eqref{eq:P_decay}.
	Then,
	\begin{align*}
		\lim_{t \to + \infty} t^{\frac{n}{2} \left( 1- \frac{1}{q} \right) + \frac{m}{2}} \norm{u \left( t \right) - \AA_{m} \left( t \right)}_{q} =0
	\end{align*}
	holds for any $q \in \left[ 1, + \infty \right]$, where
	\begin{align*}
		\AA_{m} \left( t \right) &\coloneqq \Lambda_{0, m} \left( t; u_{0} \right) + \sum_{\abs{\gamma} \leq m/2} \frac{\left( - \nu \right)^{\abs{\gamma}}}{\gamma !} \Lambda_{2 \gamma, m-2 \abs{\gamma}} \left( t; \psi_{\abs{\gamma}} \right), \\
		\Lambda_{\alpha, N} \left( t; \varphi \right) &\coloneqq \left( -2 \right)^{- \abs{\alpha}} t^{- \frac{\abs{\alpha}}{2}} \sum_{k=0}^{N} 2^{-k} t^{- \frac{k}{2}} \sum_{\abs{\beta} =k} \MM_{\beta} \left( \varphi \right) \delta_{t} \left( \bm{h}_{\nu, \alpha + \beta} G_{\nu} \right), \\
		\MM_{\beta} \left( \varphi \right) &\coloneqq \frac{1}{\beta !} \int_{\R^{n}} y^{\beta} \varphi \left( y \right) dy, \\
		\bm{h}_{\nu, \alpha} \left( x \right) &\coloneqq \sum_{2 \beta \leq \alpha} \frac{\left( -1 \right)^{\abs{\beta}} \alpha !}{\beta ! \left( \alpha -2 \beta \right) !} \nu^{- \abs{\alpha - \beta}} x^{\alpha -2 \beta}, \\
		\psi_{k} &\coloneqq \int_{0}^{+ \infty} s^{k} f \left( u \left( s \right) \right) ds,
	\end{align*}
	and $\delta_{t}$ is the dilation acting on functions $\varphi$ on $\R^{n}$ as
	\begin{align*}
		\left( \delta_{t} \varphi \right) \left( x \right) =t^{- \frac{n}{2}} \varphi \left( t^{- \frac{1}{2}} x \right), \qquad x \in \R^{n}.
	\end{align*}
\end{thm}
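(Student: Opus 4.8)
The proof rests on the integral equation \eqref{I}, which decomposes $u(t)$ into the linear evolution $e^{t\nu\Delta}u_{0}$ and the Duhamel term $\int_{0}^{t}e^{(t-s)\nu\Delta}f(u(s))\,ds$. The plan is to match these two pieces separately against $\Lambda_{0,m}(t;u_{0})$ and $\sum_{\abs{\gamma}\le m/2}\frac{(-\nu)^{\abs{\gamma}}}{\gamma!}\Lambda_{2\gamma,m-2\abs{\gamma}}(t;\psi_{\abs{\gamma}})$, showing for each that the $L^{q}$ difference is $o(t^{-\frac{n}{2}(1-1/q)-m/2})$ uniformly in $q\in[1,+\infty]$; the theorem then follows by the triangle inequality. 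The conceptual content is that the whole profile $\AA_{m}(t)$ is the Taylor expansion of the kernel $G_{(t-s)\nu}(x-y)$ to parabolic order $m$, where a spatial monomial $y^{\beta}$ is counted with weight $\abs{\beta}$ and a temporal monomial $s^{j}$ with weight $2j$. Accordingly I would first record the algebraic identities $\partial^{\alpha}G_{\nu}=(-2)^{-\abs{\alpha}}\bm{h}_{\nu,\alpha}G_{\nu}$, hence $\partial^{\alpha}G_{t\nu}=(-2)^{-\abs{\alpha}}t^{-\abs{\alpha}/2}\delta_{t}(\bm{h}_{\nu,\alpha}G_{\nu})$, together with $(-\nu\Delta)^{j}\partial^{\beta}=(-\nu)^{j}\sum_{\abs{\gamma}=j}\frac{j!}{\gamma!}\partial^{2\gamma+\beta}$. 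With these in hand the combinatorial constants hidden in $\Lambda_{\alpha,N}$ and $\bm{h}_{\nu,\alpha}$ are reproduced by inspection, so that the only genuine work lies in the remainder estimates.

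For the linear part I would use the standard asymptotic expansion of the CGL semigroup on $L^{1}_{m}$ data: Taylor-expanding $G_{t\nu}(x-y)$ in $y$ to order $m$ and inserting the moments $\MM_{\beta}(u_{0})$, $\abs{\beta}\le m$, produces exactly $\Lambda_{0,m}(t;u_{0})$. Since $u_{0}\in L^{1}_{m}$ the top-order moments are only integrable (not one order better), a dominated-convergence argument on the remainder upgrades the natural $O(t^{-\frac{n}{2}(1-1/q)-m/2})$ bound to the required $o$, which is precisely why the hypothesis is $u_{0}\in L^{1}_{m}$ rather than $L^{1}_{m+1}$.

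The nonlinear part is the core. First I would show that $\psi_{k}=\int_{0}^{+\infty}s^{k}f(u(s))\,ds$ converges in $L^{1}_{m-2k}(\R^{n})$ for every $0\le k\le m/2$. From $\abs{f(u)}\le K\abs{u}^{p}$, the pointwise bound $\abs{x^{\alpha}}\abs{u}^{p}\le\abs{x^{\alpha}u}\,\norm{u(s)}_{\infty}^{p-1}$, the $L^{\infty}$-decay in \eqref{eq:P_decay2}, and the weighted estimate of Theorem \ref{th:P_weight} (with interpolation for lower weights), one obtains $\norm{x^{\alpha}f(u(s))}_{1}\lesssim(1+s)^{-\frac{n}{2}(p-1)+\abs{\alpha}/2}$; hence $\int^{+\infty}s^{k}\norm{x^{\alpha}f(u(s))}_{1}\,ds<+\infty$ whenever $2k+\abs{\alpha}\le m$, which is exactly the content of $p>1+(m+2)/n$. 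Next I would split $\int_{0}^{t}=\int_{0}^{t/2}+\int_{t/2}^{t}$. The near-diagonal piece $\int_{t/2}^{t}$ is absorbed into the remainder and estimated directly from the decay of $\norm{f(u(s))}_{q}$. On $\int_{0}^{t/2}$, where $t-s$ is comparable to $t$, I would expand $G_{(t-s)\nu}(x-y)$ jointly in $(s,y)$ to parabolic order $m$; after extending the time integral from $[0,t/2]$ to $[0,+\infty)$ so as to form $\MM_{\beta}(\psi_{j})$, the main term reproduces the profile through the identities above. The errors consist of the tail $\int_{t/2}^{+\infty}$, the temporal Taylor remainder, and the top-order spatial remainder, each bounded by combining $\norm{(-\nu\Delta)^{j}\partial^{\beta}G_{t\nu}}_{q}\lesssim t^{-\frac{n}{2}(1-1/q)-j-\abs{\beta}/2}$ with the weighted-in-time bounds above; in every case the decisive surplus exponent equals $1+\frac{m}{2}-\frac{n}{2}(p-1)$, negative precisely under $p>1+(m+2)/n$.

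The main obstacle I anticipate is twofold. First, a naive parabolic Taylor expansion to order $m$ produces a purely spatial remainder of order $m+1$, which would demand moments of $f(u(s))$ of order $m+1$—one more than Theorem \ref{th:P_weight} supplies; the resolution is to treat the top spatial order $\abs{\beta}=m$ (with no time derivative) by the same dominated-convergence device as in the linear step, so that only weights up to order $m$ are ever used, while all remainder terms carrying at least one time derivative automatically have spatial order at most $m-1$ and cause no difficulty. Second, the estimates must be kept uniform in $q\in[1,+\infty]$, retaining both the $L^{1}$ and the $L^{\infty}$ endpoints simultaneously, which requires distributing the $L^{q}$ norm onto the kernel for the spread-out part and onto the source for the near-diagonal part. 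Because every one of the four error contributions becomes critical exactly at $p=1+(m+2)/n$, there is no slack: the estimates of $\psi_{k}$ and of the tail integral must be carried out sharply rather than with room to spare.
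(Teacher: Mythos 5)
Your proof is correct, and your key technical observation --- that a naive order-$m$ spatial expansion of the Duhamel term would demand order-$(m+1)$ moments of $f(u(s))$, one more than Theorem \ref{th:P_weight} supplies, so the top spatial order must instead be handled by a translation-continuity/dominated-convergence argument yielding $o(1)$ rather than a rate --- is precisely the mechanism that makes the $L^{1}_{m}$ hypothesis sufficient. The organization, however, differs from the paper's. The paper argues in two steps: Lemma \ref{lem:P_appro} first shows, using only the Taylor expansion in the time variable (no weights, no spatial expansion), that $u(t)$ is approximated by $e^{t\nu\Delta}u_{0}+\sum_{k=0}^{N}\frac{1}{k!}(-\nu\Delta)^{k}e^{t\nu\Delta}\psi_{k}$ with $N=\left[m/2\right]$, to within $o(t^{-\frac{n}{2}(1-\frac{1}{q})-\frac{m}{2}})$; the $\psi_{k}$ are kept as functions, and only afterwards is the spatial expansion performed, by applying Proposition \ref{pro:CGL_asymptotics_lim} to the finitely many \emph{fixed} functions $u_{0}$ and $\psi_{\abs{\gamma}}$, once Theorem \ref{th:P_weight} and \eqref{eq:P_decay2} give $\psi_{\abs{\gamma}}\in L^{1}_{m-2\abs{\gamma}}\left(\R^{n}\right)$. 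You instead expand the kernel jointly in $(s,y)$ inside the Duhamel integral and form the moments $\MM_{\beta}(\psi_{\abs{\gamma}})$ directly; this is the five-part decomposition the paper reserves for the quantitative Theorem \ref{th:P_asymptotics} in Section \ref{sec:proof_asymptotics}, transplanted here with dominated convergence patched in at top order. The trade-off: in the paper's arrangement the dominated-convergence step is applied only to fixed $L^{1}$-weighted functions, so no limit is ever interchanged with the $s$-integration; in yours, the $\gamma=0$, $\abs{\beta}=m$ remainder forces you to pass $t\to+\infty$ through $\int_{0}^{t/2}\cdots\,ds$, i.e.\ to dominate jointly in $(s,\theta,y)$. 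This does go through --- the relevant integrand is bounded by $2\norm{\bm{h}_{\nu,\beta}G_{\nu}}_{q}\abs{y^{\beta}f\left(u\left(s\right)\right)\left(y\right)}$, which is integrable on $\left(0,+\infty\right)\times\left[0,1\right]\times\R^{n}$ precisely because $p>1+(m+2)/n$, and tends to zero pointwise --- but it is the one step your sketch leaves implicit, and the one place where your route is more delicate than the paper's. What your route buys is a single decomposition serving both Theorem \ref{th:P_asymptotics_lim} and Theorem \ref{th:P_asymptotics}; what the paper's buys is that Lemma \ref{lem:P_appro} needs no weighted spaces at all, cleanly isolating where the weights of Theorem \ref{th:P_weight} actually enter.
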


Although the profile $\Lambda_{\alpha, N} \left( t; \varphi \right)$ seems to be complicated, it is just the $N$-th order asymptotic profile of $\partial^{\alpha} e^{t \nu \Delta} \varphi$.
In fact, under suitable assumptions, we have
\begin{align*}
	\lim_{t \to + \infty} t^{\frac{n}{2} \left( 1- \frac{1}{q} \right) + \frac{\abs{\alpha} +N}{2}} \norm{\partial^{\alpha} e^{t \nu \Delta} \varphi - \Lambda_{\alpha, N} \left( t; \varphi \right)}_{q} =0
\end{align*}
for any $q \in \left[ 1, + \infty \right]$.
This fact follows from the Taylor expansion of the fundamental solution $G_{t \nu}$ appearing in the integral representation of the CGL semigroup with respect to the space variable:
\begin{align*}
	G_{t \nu} \left( x-y \right) = \sum_{\abs{\alpha} \leq N} \frac{1}{\alpha !} \left( -y \right)^{\alpha} \left( \partial^{\alpha} G_{t \nu} \right) \left( x \right) + \text{Remainder}.
\end{align*}
For details, see Propositions \ref{pro:CGL_asymptotics} and \ref{pro:CGL_asymptotics_lim} in Section \ref{sec:CGL_asymptotics}.
The finiteness of the moments appearing in the definition of $\Lambda_{2 \gamma, m-2 \abs{\gamma}} \left( t; \psi_{\abs{\gamma}} \right)$ is guaranteed by the decay estimate \eqref{eq:P_decay2} and the weighted estimates given in Theorem \ref{th:P_weight}.
We note that $\bm{h}_{\nu, \alpha}$ is the Hermite polynomial on $\R^{n}$ with a complex coefficient arising naturally from $\partial^{\alpha} G_{t \nu}$.

The proof of Theorem \ref{th:P_asymptotics_lim} is based on the asymptotic expansions of the CGL semigroup and the Taylor expansion of the Duhamel term in the integral equation \eqref{I} with respect to the time variable:
\begin{align*}
	e^{\left( t-s \right) \nu \Delta} f \left( u \left( s \right) \right) &= \sum_{k=0}^{N} \frac{1}{k!} \left( -s \nu \Delta \right)^{k} e^{t \nu \Delta} f \left( u \left( s \right) \right) + \mathrm{Remainder} \\
	&= \sum_{\abs{\gamma} \leq N} \frac{\left( - \nu \right)^{\abs{\gamma}}}{\gamma !} s^{\abs{\gamma}} \partial^{2 \gamma} e^{t \nu \Delta} f \left( u \left( s \right) \right) + \mathrm{Remainder}.
\end{align*}
In summary, we derive the asymptotic expansions of the global solution to \eqref{P} satisfying \eqref{eq:P_decay} by only using the Taylor expansions with respect to the both space and time variables.

If we suppose $u_{0} \in L^{1}_{m+1} \left( \R^{n} \right)$ instead of $u_{0} \in L^{1}_{m} \left( \R^{n} \right)$ in Theorem \ref{th:P_asymptotics_lim}, then we can derive decay estimates of the remainder for the asymptotic expansion given in Theorem \ref{th:P_asymptotics_lim}.

\begin{thm} \label{th:P_asymptotics}
	Let $m \in \Z_{\geq 0}$ and let $p>1+ \left( m+2 \right) /n$.
	Let $u_{0} \in \left( L^{1}_{m+1} \cap L^{\infty} \right) \left( \R^{n} \right)$ and let $u \in X$ be a global solution to \eqref{P} satisfying \eqref{eq:P_decay}.
	Then, for any $q \in \left[ 1, + \infty \right]$, there exists $C>0$ such that the estimates
	\begin{align*}
		t^{\frac{n}{2} \left( 1- \frac{1}{q} \right) + \frac{m}{2}} \norm{u \left( t \right) - \AA_{m} \left( t \right)}_{q} &\leq \begin{dcases}
			Ct^{- \left( \sigma - \frac{m}{2} \right)} &\quad \text{if} \quad 1+ \frac{m+2}{n} <p<1+ \frac{m+3}{n}, \\
			Ct^{- \frac{1}{2}} \log \left( 2+t \right) &\quad \text{if} \quad p=1+ \frac{m+3}{n}, \\
			Ct^{- \frac{1}{2}} &\quad \text{if} \quad p>1+ \frac{m+3}{n}
		\end{dcases}
	\end{align*}
	hold for all $t>0$, where
	\begin{align*}
		\sigma \coloneqq \frac{n}{2} \left( p-1 \right) -1> \frac{m}{2}
	\end{align*}
	and $\AA_{m} \left( t \right)$ is the same as in Theorem \ref{th:P_asymptotics_lim}.
\end{thm}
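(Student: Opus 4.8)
The plan is to subtract the integral equation \eqref{I} from the profile $\AA_{m}(t)$ and to estimate the two resulting pieces separately: the linear remainder $e^{t\nu\Delta}u_{0}-\Lambda_{0,m}(t;u_{0})$ and the Duhamel remainder $\int_{0}^{t}e^{(t-s)\nu\Delta}f(u(s))\,ds-\sum_{\abs{\gamma}\le m/2}\frac{(-\nu)^{\abs{\gamma}}}{\gamma!}\Lambda_{2\gamma,m-2\abs{\gamma}}(t;\psi_{\abs{\gamma}})$. For the linear part I would invoke the quantitative asymptotic expansion of the CGL semigroup (Proposition \ref{pro:CGL_asymptotics}) with $\alpha=0$ and expansion order $m$: since $u_{0}\in L^{1}_{m+1}(\R^{n})$ --- one moment more than in Theorem \ref{th:P_asymptotics_lim} --- this yields $\norm{e^{t\nu\Delta}u_{0}-\Lambda_{0,m}(t;u_{0})}_{q}\le Ct^{-\frac{n}{2}(1-1/q)-\frac{m+1}{2}}$, i.e. a contribution bounded by $Ct^{-1/2}$ after multiplication by $t^{\frac{n}{2}(1-1/q)+\frac{m}{2}}$, which is admissible in all three regimes.

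The heart of the matter is the Duhamel remainder. First I would record the decay of the nonlinearity: from $f(0)=0$ and the hypothesis one has $\abs{f(\xi)}\le K\abs{\xi}^{p}$, so \eqref{eq:P_decay2} gives $\norm{f(u(s))}_{1}\le C(1+s)^{-(\sigma+1)}$, while distributing the weight onto a single factor (writing $\abs{x^{\beta}}\abs{u}^{p}=\abs{x^{\beta}u}\,\abs{u}^{p-1}$) and combining Theorem \ref{th:P_weight}, applied with $m$ replaced by $m+1$, with \eqref{eq:P_decay2} gives $\norm{x^{\beta}f(u(s))}_{1}\le C(1+s^{\abs{\beta}/2})(1+s)^{-(\sigma+1)}$ for $\abs{\beta}\le m+1$. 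These bounds make $\psi_{\abs{\gamma}}$ and its moments $\MM_{\beta}(\psi_{\abs{\gamma}})$ finite for $\abs{\beta}\le m-2\abs{\gamma}$, because the worst exponent is $\abs{\gamma}+\abs{\beta}/2\le m/2<\sigma$, so that $\AA_{m}(t)$ is well defined. Next I would Taylor-expand the semigroup in time, writing $e^{(t-s)\nu\Delta}=\sum_{\abs{\gamma}\le N}\frac{(-\nu)^{\abs{\gamma}}}{\gamma!}s^{\abs{\gamma}}\partial^{2\gamma}e^{t\nu\Delta}+(\text{integral remainder})$ with $N=\lfloor m/2\rfloor$, and integrate in $s$. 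This splits the Duhamel remainder into a tail term $\sum_{\gamma}\frac{(-\nu)^{\abs{\gamma}}}{\gamma!}\partial^{2\gamma}e^{t\nu\Delta}\int_{t}^{\infty}s^{\abs{\gamma}}f(u(s))\,ds$ arising from completing the moment integral to $(0,\infty)$, a time-Taylor remainder $\int_{0}^{t}$ of the integral remainder, and a spatial-expansion error $\sum_{\gamma}\frac{(-\nu)^{\abs{\gamma}}}{\gamma!}[\partial^{2\gamma}e^{t\nu\Delta}\psi_{\abs{\gamma}}-\Lambda_{2\gamma,m-2\abs{\gamma}}(t;\psi_{\abs{\gamma}})]$ from replacing each $\partial^{2\gamma}e^{t\nu\Delta}\psi_{\abs{\gamma}}$ by its order-$(m-2\abs{\gamma})$ spatial profile.

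Each piece would then be estimated with the smoothing bound $\norm{\partial^{\beta}e^{\tau\nu\Delta}\varphi}_{q}\le C\tau^{-\frac{n}{2}(1-1/q)-\abs{\beta}/2}\norm{\varphi}_{1}$, the uniform $L^{q}$-boundedness of $e^{\tau\nu\Delta}$, and the decay bounds above. The tail term reduces to $\int_{t}^{\infty}s^{\abs{\gamma}}(1+s)^{-(\sigma+1)}\,ds\sim t^{\abs{\gamma}-\sigma}$ and contributes exactly $t^{-(\sigma-m/2)}$ after weighting; on $s\in(t/2,t)$ the Duhamel integral is controlled directly by $L^{q}$-boundedness and yields the same power. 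The time-Taylor remainder on $s\in(0,t/2)$ is governed by $\int_{0}^{t/2}s^{N+1}(1+s)^{-(\sigma+1)}\,ds$, whose behaviour (convergent, logarithmic, or $\sim t^{N+1-\sigma}$) is dictated by the position of $\sigma$ relative to $N+1$; the spatial-expansion error is controlled by the $(m-2\abs{\gamma}+1)$-st moments of $\psi_{\abs{\gamma}}$, whose worst exponent $\abs{\gamma}+\abs{\beta}/2=\tfrac{m+1}{2}$ makes them finite, logarithmically divergent, or divergent according to whether $\sigma>\tfrac{m+1}{2}$, $\sigma=\tfrac{m+1}{2}$, or $\sigma<\tfrac{m+1}{2}$. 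Collecting the worst of these rates produces the trichotomy, the final bound being $t^{-\min(\sigma-m/2,\,1/2)}$ with a logarithmic correction precisely at the balance point $\sigma=\tfrac{m+1}{2}$, i.e. $p=1+\tfrac{m+3}{n}$.

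I expect the main obstacle to be this critical case $p=1+\tfrac{m+3}{n}$, where the relevant moment integrals --- the time integral $\int_{0}^{t/2}s^{N+1}(1+s)^{-(\sigma+1)}\,ds$ when $m$ is odd, and the top spatial moment $\int_{0}^{\infty}s^{\abs{\gamma}}\norm{x^{\beta}f(u(s))}_{1}\,ds$ with $\abs{\gamma}+\abs{\beta}/2=\sigma$ when $m$ is even --- are only logarithmically divergent. Extracting the sharp $t^{-1/2}\log(2+t)$ factor there, rather than a genuine power loss, requires splitting the spatial integration at $\abs{y}\sim t^{1/2}$ and the time integration at $s\sim t/2$ and $s\sim t$ and tracking the borderline-integrable contributions by hand, since the clean quantitative form of Proposition \ref{pro:CGL_asymptotics} does not apply: the top moment of $\psi_{\abs{\gamma}}$ just fails to be finite.
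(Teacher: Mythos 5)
Your skeleton (linear remainder, late-time Duhamel on $\left( t/2, t \right)$, time-Taylor remainder on $\left( 0, t/2 \right)$, tail of the moment integrals, spatial-expansion error) is essentially the paper's five-part decomposition, and your treatment of the first four pieces is correct. The gap is in the fifth piece, which is exactly the one that generates the trichotomy. You perform the spatial expansion on the time-integrated profiles, i.e.\ you must estimate $\partial^{2 \gamma} e^{t \nu \Delta} \psi_{\abs{\gamma}} - \Lambda_{2 \gamma, m-2 \abs{\gamma}} \left( t; \psi_{\abs{\gamma}} \right)$ via Proposition \ref{pro:CGL_asymptotics}, and this requires $\psi_{\abs{\gamma}} \in L^{1}_{m-2 \abs{\gamma} +1} \left( \R^{n} \right)$. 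As you yourself compute, the top moments $\MM_{\beta} \left( \psi_{\abs{\gamma}} \right)$ with $\abs{\beta} =m-2 \abs{\gamma} +1$ behave like $\int_{0}^{+ \infty} \left( 1+s \right)^{- \sigma -1+ \frac{m+1}{2}} ds$ and are finite only when $p>1+ \left( m+3 \right) /n$. So your argument proves the theorem only in the third regime: in the critical case the moment diverges logarithmically, and in the whole subcritical range $1+ \left( m+2 \right) /n<p<1+ \left( m+3 \right) /n$ it diverges polynomially, so the term you wrote down is simply not defined and ``collecting the worst of these rates'' is not a valid step --- a divergent moment yields no rate at all. Your proposed repair (splitting the spatial integration at $\abs{y} \sim t^{1/2}$ and tracking borderline contributions by hand) is left as a sketch, and it sits precisely where the actual work of the theorem lies.

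The fix used in the paper is a reordering that never forms the divergent moments: do not commute the spatial expansion past the time integration. Apply Proposition \ref{pro:CGL_asymptotics} to $\varphi =f \left( u \left( s \right) \right)$ for each fixed $s$ and keep the error inside the time integral truncated at $t/2$, i.e.\ estimate
\begin{align*}
	J_{3} \left( t \right) \coloneqq \sum_{\abs{\gamma} \leq N} \frac{\left( - \nu \right)^{\abs{\gamma}}}{\gamma !} \int_{0}^{t/2} s^{\abs{\gamma}} \left( \partial^{2 \gamma} e^{t \nu \Delta} f \left( u \left( s \right) \right) - \Lambda_{2 \gamma, m-2 \abs{\gamma}} \left( t; f \left( u \left( s \right) \right) \right) \right) ds.
\end{align*}
For each fixed $s$ the moments of $f \left( u \left( s \right) \right)$ of order $m-2 \abs{\gamma} +1$ are finite by Theorem \ref{th:P_weight} (applied with $m+1$, which is where $u_{0} \in L^{1}_{m+1}$ enters) together with \eqref{eq:P_decay2}, and the integrand is of size $\left( 1+s \right)^{- \frac{n}{2} \left( p-1 \right) + \frac{m+1}{2}}$; integrating over $\left( 0, t/2 \right)$ produces exactly $t^{\frac{m+1}{2} - \sigma}$, $\log \left( 2+t \right)$, or $O \left( 1 \right)$ according to the three regimes, and the prefactor $t^{- \frac{n}{2} \left( 1- \frac{1}{q} \right) - \frac{m+1}{2}}$ from Proposition \ref{pro:CGL_asymptotics} then gives all three rates of the statement at once. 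In particular no splitting of the spatial integration at $\abs{y} \sim t^{1/2}$ is needed: the logarithm at $p=1+ \left( m+3 \right) /n$ comes purely from $\int_{0}^{t/2} \left( 1+s \right)^{-1} ds$. Correcting your fifth term in this way (and replacing your tail $\int_{t}^{+ \infty}$ by $\int_{t/2}^{+ \infty}$, consistent with truncating the time-Taylor expansion at $t/2$ --- which is also forced on you, since the $\theta$-integral in the Taylor remainder diverges if $s$ is allowed to run up to $t$) turns your outline into the paper's proof.
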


By a simple computation, we see that the $m$-th order asymptotic profile $\AA_{m} \left( t \right)$ has the following stratification structure.
First of all, we have
\begin{align*}
	\AA_{k} \left( t \right) - \AA_{k-1} \left( t \right) =t^{- \frac{k}{2}} \delta_{t} \left( \AA_{k} \left( 1 \right) - \AA_{k-1} \left( 1 \right) \right)
\end{align*}
for any $t>0$ and $k \in \left\{ 0, \ldots, m \right\}$, where $\AA_{-1} \left( t \right) \equiv 0$ (see Lemma \ref{lem:P_Lambda} in Section \ref{sec:proof_optimal}).
In the above identity, the negative power $t^{-k/2}$ means the decay of the asymptotic amplitude, the dilation $\delta_{t}$ means the parabolic self-similarity of the asymptotic profile, and the function $\AA_{k} \left( 1 \right) - \AA_{k-1} \left( 1 \right)$ on $\R^{n}$ means the shape of the asymptotic profile, which is represented explicitly as
\begin{align*}
	\AA_{k} \left( 1 \right) - \AA_{k-1} \left( 1 \right) =2^{-k} \sum_{\abs{\alpha} =k} {\,} \Biggl( \MM_{\alpha} \left( u_{0} \right) + \sum_{\substack{\beta +2 \gamma = \alpha \\ \abs{\gamma} \leq k/2}} \frac{\left( - \nu \right)^{\abs{\gamma}}}{\gamma !} \MM_{\beta} \left( \psi_{\abs{\gamma}} \right) \Biggr) {\,} \bm{h}_{\nu, \alpha} G_{\nu}.
\end{align*}
We emphasize that the coefficients in the above formula are completely determined from the moments of the initial data in space and those of the nonlinear term in spacetime.

By virtue of this stratification structure, we have the following theorem.

\begin{thm} \label{th:P_asymptotics_optimal}
	Let $m \in \Z_{\geq 0}$ and let $p>1+ \left( m+3 \right) /n$.
	Let $u_{0} \in \left( L^{1}_{m+1} \cap L^{\infty} \right) \left( \R^{n} \right)$ and let $u \in X$ be a global solution to \eqref{P} satisfying \eqref{eq:P_decay}.
	Then,
	\begin{align}
		\label{eq:P_asymptotics_optimal}
		\lim_{t \to + \infty} t^{\frac{n}{2} \left( 1- \frac{1}{q} \right) + \frac{m}{2} + \frac{1}{2}} \norm{u \left( t \right) - \AA_{m} \left( t \right)}_{q} = \norm{\AA_{m+1} \left( 1 \right) - \AA_{m} \left( 1 \right)}_{q}
	\end{align}
	holds for any $q \in \left[ 1, + \infty \right]$.
	In particular, the following assertions are equivalent:
	\begin{itemize}
		\item[\rm (i)]
			$\AA_{m+1} \left( 1 \right) - \AA_{m} \left( 1 \right) \not\equiv 0$.
		\item[\rm (ii)]
			There exists $\alpha \in \Z_{\geq 0}^{n}$ with $\abs{\alpha} =m+1$ such that
			\begin{align*}
				\MM_{\alpha} \left( u_{0} \right) + \sum_{\substack{\beta +2 \gamma = \alpha \\ \abs{\gamma} \leq \left( m+1 \right) /2}} \frac{\left( - \nu \right)^{\abs{\gamma}}}{\gamma !} \MM_{\beta} \left( \psi_{\abs{\gamma}} \right) \neq 0.
			\end{align*}
		\item[\rm (iii)]
			For any $q \in \left[ 1, + \infty \right]$ and $\varepsilon \in \left( 0, 1 \right)$, there exists $t_{0} >0$ such that the estimates
			\begin{align*}
				&0<t^{- \frac{1}{2}} \left( 1- \varepsilon \right) \norm{\AA_{m+1} \left( 1 \right) - \AA_{m} \left( 1 \right)}_{q} \\
				&\hspace{2cm} \leq t^{\frac{n}{2} \left( 1- \frac{1}{q} \right) + \frac{m}{2}} \norm{u \left( t \right) - \AA_{m} \left( t \right)}_{q} \\
				&\hspace{4cm} \leq t^{- \frac{1}{2}} \left( 1+ \varepsilon \right) \norm{\AA_{m+1} \left( 1 \right) - \AA_{m} \left( 1 \right)}_{q}
			\end{align*}
			hold for all $t>t_{0}$.
	\end{itemize}
\end{thm}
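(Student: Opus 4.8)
The plan is to pin down the sharp rate by comparing the $m$-th order profile $\AA_{m}$ with the next one $\AA_{m+1}$, and then to read off the three equivalences from the resulting limit. First I would decompose
\[
	u(t) - \AA_{m}(t) = \bigl( u(t) - \AA_{m+1}(t) \bigr) + \bigl( \AA_{m+1}(t) - \AA_{m}(t) \bigr).
\]
Under the present hypotheses we have $p>1+(m+3)/n = 1+((m+1)+2)/n$ and $u_{0} \in (L^{1}_{m+1} \cap L^{\infty})(\R^{n})$, which are precisely the assumptions of Theorem \ref{th:P_asymptotics_lim} with $m$ replaced by $m+1$. Applying that theorem therefore gives
\[
	\lim_{t \to +\infty} t^{\frac{n}{2}(1 - \frac{1}{q}) + \frac{m+1}{2}} \norm{u(t) - \AA_{m+1}(t)}_{q} = 0,
\]
and since $\frac{m+1}{2} = \frac{m}{2} + \frac{1}{2}$, the first term of the decomposition, weighted by $t^{\frac{n}{2}(1 - \frac{1}{q}) + \frac{m}{2} + \frac{1}{2}}$, tends to $0$.

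For the second term I would invoke the stratification identity of Lemma \ref{lem:P_Lambda} with $k=m+1$,
\[
	\AA_{m+1}(t) - \AA_{m}(t) = t^{-\frac{m+1}{2}} \delta_{t}\bigl( \AA_{m+1}(1) - \AA_{m}(1) \bigr),
\]
together with the elementary scaling $\norm{\delta_{t}\varphi}_{q} = t^{-\frac{n}{2}(1 - \frac{1}{q})} \norm{\varphi}_{q}$. A direct count of exponents then shows that the weight $t^{\frac{n}{2}(1 - \frac{1}{q}) + \frac{m}{2} + \frac{1}{2}}$ cancels the $t$-dependence exactly, so that the weighted norm of the second term equals $\norm{\AA_{m+1}(1) - \AA_{m}(1)}_{q}$ for \emph{every} $t>0$. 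The limit \eqref{eq:P_asymptotics_optimal} then follows from the forward and reverse triangle inequalities applied to the decomposition (a squeeze), the $u - \AA_{m+1}$ contribution vanishing and the $\AA_{m+1} - \AA_{m}$ contribution being constant.

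Next I would settle the equivalences. Writing $c_{\alpha}$ for the coefficient appearing in (ii), the explicit formula recorded before the statement reads $\AA_{m+1}(1) - \AA_{m}(1) = 2^{-(m+1)} \sum_{\abs{\alpha}=m+1} c_{\alpha}\, \bm{h}_{\nu,\alpha} G_{\nu}$. The implication ``not (ii) $\Rightarrow$ not (i)'' is immediate, since vanishing of all $c_{\alpha}$ kills this sum. For ``(ii) $\Rightarrow$ (i)'' I would prove that the family $\{\bm{h}_{\nu,\alpha} G_{\nu}; \abs{\alpha}=m+1\}$ is linearly independent: because $G_{\nu}$ is nowhere vanishing, this reduces to the independence of the polynomials $\bm{h}_{\nu,\alpha}$, and from their definition the only degree-$(m+1)$ contribution is the $\beta=0$ term $\nu^{-(m+1)} x^{\alpha}$, so the top-degree part of any vanishing combination $\sum_{\abs{\alpha}=m+1} c_{\alpha}\bm{h}_{\nu,\alpha}$ equals $\nu^{-(m+1)} \sum_{\abs{\alpha}=m+1} c_{\alpha} x^{\alpha}$, forcing every $c_{\alpha}=0$ by independence of distinct monomials. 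For the equivalence with (iii), set $L \coloneqq \norm{\AA_{m+1}(1) - \AA_{m}(1)}_{q}$; the strict lower bound in (iii) forces $L>0$, giving ``(iii) $\Rightarrow$ (i)'', while conversely, if $L>0$ then for each $\varepsilon \in (0,1)$ the limit \eqref{eq:P_asymptotics_optimal} yields $t_{0}>0$ with $(1-\varepsilon)L \leq t^{\frac{n}{2}(1-\frac{1}{q})+\frac{m}{2}+\frac{1}{2}} \norm{u(t) - \AA_{m}(t)}_{q} \leq (1+\varepsilon)L$ for $t>t_{0}$, and multiplying through by $t^{-1/2}$ reproduces exactly the chain displayed in (iii).

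The genuinely delicate point is the very first step: one must ensure that the profile $\AA_{m+1}$ is well defined after the index shift $m \mapsto m+1$, i.e.\ that the moments $\MM_{\beta}(\psi_{\abs{\gamma}})$ with $\abs{\gamma} \leq (m+1)/2$ entering $\AA_{m+1}$ are finite. This is exactly where the strict hypothesis $p>1+(m+3)/n$ — equivalently $\sigma>(m+1)/2$ — is used, guaranteeing the time-integrability of $s^{\abs{\gamma}} f(u(s))$ through the decay estimate \eqref{eq:P_decay2} and the weighted bound of Theorem \ref{th:P_weight}. Once $\AA_{m+1}$ is legitimate and Theorem \ref{th:P_asymptotics_lim} may be applied to it, the remainder is bookkeeping with the self-similar structure and elementary polynomial algebra.
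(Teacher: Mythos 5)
Your proposal is correct and takes essentially the same approach as the paper: the identical decomposition $u(t)-\AA_{m}(t) = \left( u(t)-\AA_{m+1}(t) \right) + \left( \AA_{m+1}(t)-\AA_{m}(t) \right)$, Theorem \ref{th:P_asymptotics_lim} applied at order $m+1$ (whose hypotheses hold precisely because $p>1+(m+3)/n = 1+((m+1)+2)/n$ and $u_{0} \in L^{1}_{m+1}$), Lemma \ref{lem:P_Lambda} together with the dilation scaling to make the weighted norm of the second term identically $\norm{\AA_{m+1}(1)-\AA_{m}(1)}_{q}$, the same two-sided triangle-inequality squeeze for \eqref{eq:P_asymptotics_optimal}, and the same reading of (iii) from the limit. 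The only minor deviation is in (ii) $\Rightarrow$ (i): you prove linear independence of $\left\{ \bm{h}_{\nu, \alpha} G_{\nu}; {\,} \abs{\alpha} =m+1 \right\}$ by isolating the top-degree monomials $\nu^{-(m+1)}x^{\alpha}$, whereas the paper invokes the orthogonality relation of the complex Hermite polynomials against $G_{\nu}$; both are valid, and yours is if anything more self-contained since it does not require verifying that orthogonality identity for complex $\nu$.
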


Since the assertion (i) holds if the initial data is sufficiently small in some sense (see Proposition \ref{pro:P_small_optimal} in Appendix \ref{sec:appendix} for details), the equivalence $\text{(i)} \Leftrightarrow \text{(iii)}$ in the above theorem implies that the decay rate of the remainder for the $m$-th order asymptotic expansion given in Theorem \ref{th:P_asymptotics} is optimal in the case where $p>1+ \left( m+3 \right) /n$.
In particular, we give a characterization of this optimality from the viewpoint of the moments of the initial data and the nonlinear term.
We remark that the optimality of the decay rates given in Theorem \ref{th:P_asymptotics} with $1+ \left( m+2 \right) /n<p \leq 1+ \left( m+3 \right) /n$ is still open (see also \cite[Remark 1.7]{Ishige-Ishiwata-Kawakami}).

\begin{rem}
	After we obtained these results, we found two papers \cite{Nakamura-Takeda, Ishige-Kawakami2022} which discuss the asymptotic behavior of global solutions to the semilinear parabolic equations in a similar manner.
	We have some advantages over them as follows.
	\begin{itemize}
		\item
			In \cite{Nakamura-Takeda}, the authors derived weighted estimates and higher order asymptotic expansions of small amplitude solutions to the semilinear heat equation in the de Sitter spacetime.
			In their approach to the weighted estimates, we need to assume the smallness of $\norm{\left( 1+ \abs{x}^{m} \right) u_{0}}_{1} + \norm{u_{0}}_{\infty}$ since we employ the contraction argument in $L^{\infty} \left( 0, + \infty; \left( L^{1}_{m} \cap L^{\infty} \right) \left( \R^{n} \right) \right)$ equipped with a certain time-weighted norm (see \cite[Theorem 1.1]{Nakamura-Takeda}).
			By contrast, in our approach, we assume only the smallness of $\norm{u_{0}}_{1} + \norm{u_{0}}_{\infty}$ even if we construct a global solution to \eqref{P} satisfying \eqref{eq:P_decay} by the contraction argument.
			For the asymptotic expansions, although their computations are almost the same as ours, they did not derive the explicit formulae of the asymptotic profiles with the Hermite polynomials and the decay estimates of the remainders (see \cite[Theorem 1.2]{Nakamura-Takeda}).
			Moreover, they had no discussion on the optimality for the decay rates of the remainders in any case.
		\item
			In \cite{Ishige-Kawakami2022}, the authors obtained weighted estimates and higher order asymptotic expansions of global solutions to the semilinear fractional diffusion equation.
			Their method is also valid for the Fujita-type equation.
			However, since they used the comparison principle to derive the weighted estimates, it is not available for \eqref{P} (see \cite[Theorem 5.1 (a)]{Ishige-Kawakami2022}).
			For the asymptotic expansions, similarly to our approach, they employed the Taylor expansions of the fundamental solution with respect to the both space and time variables.
			In order not to impose the regularity on the nonlinear term, the coefficients in their asymptotic profiles depend on time and the parabolic self-similarity of the profiles is broken.
			Hence, it seems difficult to deduce the optimality for the decay rates of the remainders.
			Although they also obtained a result of the case where the coefficients are constant, which corresponds to Theorem \ref{th:P_asymptotics_lim}, they assume a stronger condition for the exponent $p$ than that in Theorem \ref{th:P_asymptotics_lim} for some reason (see \cite[Corollary 5.1]{Ishige-Kawakami2022}).
	\end{itemize}
\end{rem}

This paper is organized as follows.
In the next section, we introduce basic estimates and asymptotic expansions of the CGL semigroup.
In Sections \ref{sec:proof_weight}, \ref{sec:proof_asymptotics_lim}, \ref{sec:proof_asymptotics}, and \ref{sec:proof_optimal}, we give the proofs of Theorems \ref{th:P_weight}, \ref{th:P_asymptotics_lim}, \ref{th:P_asymptotics}, and \ref{th:P_asymptotics_optimal}, respectively.
\section{Basic properties of the CGL semigroup}

In this section, we introduce basic properties of the CGL semigroup which play crucial roles in the proofs of our main results.
First of all, we prepare some notation.
Let $\Z_{>0}$ be the set of positive integers and let $\Z_{\geq 0} \coloneqq \Z_{>0} \cup \left\{ 0 \right\}$.
For $\alpha = \left( \alpha_{1}, \ldots, \alpha_{n} \right) \in \Z_{\geq 0}^{n}$ and $x= \left( x_{1}, \ldots, x_{n} \right) \in \R^{n}$, we define
\begin{align*}
	\abs{\alpha} \coloneqq \sum_{j=1}^{n} \alpha_{j}, \qquad \alpha ! \coloneqq \prod_{j=1}^{n} \alpha_{j} !, \qquad x^{\alpha} \coloneqq \prod_{j=1}^{n} x_{j}^{\alpha_{j}}, \qquad \partial^{\alpha} = \partial_{x}^{\alpha} \coloneqq \prod_{j=1}^{n} \partial_{j}^{\alpha_{j}}, \qquad \partial_{j} \coloneqq \frac{\partial}{\partial x_{j}}.
\end{align*}
For $\alpha = \left( \alpha_{1}, \ldots, \alpha_{n} \right)$, $\beta = \left( \beta_{1}, \ldots, \beta_{n} \right) \in \Z_{\geq 0}^{n}$, $\alpha \leq \beta$ means that $\alpha_{j} \leq \beta_{j}$ holds for any $j \in \left\{ 1, \ldots, n \right\}$.
In addition, we define
\begin{align*}
	\binom{\alpha}{\beta} \coloneqq \begin{dcases}
		\frac{\alpha !}{\beta ! \left( \alpha - \beta \right) !} = \prod_{j=1}^{n} \frac{\alpha_{j} !}{\beta_{j} ! \left( \alpha_{j} - \beta_{j} \right) !}, &\qquad \beta \leq \alpha, \\
		0, &\qquad \text{otherwise}.
	\end{dcases}
\end{align*}

\subsection{Decay and smoothing estimates} \label{sec:CGL_esti}
To consider basic properties of the CGL semigroup, we start with the self-similarity of the fundamental solution $G_{t \nu}$ described as
\begin{align}
	\label{eq:Gauss_self}
	G_{t \nu} \left( x \right) =t^{- \frac{n}{2}} G_{\nu} \left( t^{- \frac{1}{2}} x \right).
\end{align}
Based on this structure, we define the dilation $\delta_{t}$ by
\begin{align*}
	\left( \delta_{t} \varphi \right) \left( x \right) =t^{- \frac{n}{2}} \varphi \left( t^{- \frac{1}{2}} x \right), \qquad \varphi \in L_{\mathrm{loc}}^{1} \left( \R^{n} \right), {\ } x \in \R^{n}
\end{align*}
for each $t>0$.
Then, the family of the dilations $\left( \delta_{t}; t>0 \right)$ has the following properties:
\begin{itemize}
	\item[(i)]
		$\delta_{t} \delta_{s} = \delta_{ts}$ for any $t, s>0$.
	\item[(ii)]
		$\norm{\delta_{t} \varphi}_{q} =t^{- \frac{n}{2} \left( 1- \frac{1}{q} \right)} \norm{\varphi}_{q}$ for any $t>0$, $q \in \left[ 1, + \infty \right]$, and $\varphi \in L^{q} \left( \R^{n} \right)$.
\end{itemize}
By using the dilation $\delta_{t}$, we can rewrite \eqref{eq:Gauss_self} as
\begin{align}
	\label{eq:Gauss_self_dilation}
	G_{t \nu} = \delta_{t} G_{\nu},
\end{align}
whence follows
\begin{align}
	\label{eq:CGL_derivative}
	\partial^{\alpha} e^{t \nu \Delta} \varphi = \left( \partial^{\alpha} G_{t \nu} \right) \ast \varphi = \left( \partial^{\alpha} \left( \delta_{t} G_{\nu} \right) \right) \ast \varphi =t^{- \frac{\abs{\alpha}}{2}} \left( \delta_{t} \left( \partial^{\alpha} G_{\nu} \right) \right) \ast \varphi
\end{align}
for any $t>0$ and $\alpha \in \Z_{\geq 0}^{n}$.
Applying Young's inequality to the above identity yields the following estimate for the CGL semigroup.

\begin{lem} \label{lem:CGL_Lp-Lq}
	Let $1 \leq q \leq p \leq + \infty$, $\alpha \in \Z_{\geq 0}^{n}$, and $\varphi \in L^{q} \left( \R^{n} \right)$.
	Then, $\partial^{\alpha} e^{t \nu \Delta} \varphi \in L^{p} \left( \R^{n} \right)$ and the estimate
	\begin{align*}
		\norm{\partial^{\alpha} e^{t \nu \Delta} \varphi}_{p} \leq t^{- \frac{n}{2} \left( \frac{1}{q} - \frac{1}{p} \right) - \frac{\abs{\alpha}}{2}} \norm{\partial^{\alpha} G_{\nu}}_{r} \norm{\varphi}_{q}
	\end{align*}
	holds for any $t>0$, where $r \in \left[ 1, + \infty \right]$ with $1/p+1=1/r+1/q$.
\end{lem}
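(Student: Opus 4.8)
The plan is to read the estimate off directly from the convolution identity \eqref{eq:CGL_derivative}, combined with Young's inequality and the scaling property (ii) of the dilation. First I would invoke \eqref{eq:CGL_derivative} to write
\[
	\partial^{\alpha} e^{t \nu \Delta} \varphi = t^{- \frac{\abs{\alpha}}{2}} \left( \delta_{t} \left( \partial^{\alpha} G_{\nu} \right) \right) \ast \varphi,
\]
which reduces the whole claim to controlling a single convolution of $\varphi \in L^{q} \left( \R^{n} \right)$ against the dilated kernel $\delta_{t} \left( \partial^{\alpha} G_{\nu} \right)$.

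Next I would apply Young's convolution inequality with the exponents $p, q, r$ tied by $1/p+1=1/r+1/q$, obtaining
\[
	\norm{\left( \delta_{t} \left( \partial^{\alpha} G_{\nu} \right) \right) \ast \varphi}_{p} \leq \norm{\delta_{t} \left( \partial^{\alpha} G_{\nu} \right)}_{r} \norm{\varphi}_{q},
\]
which in particular yields the membership $\partial^{\alpha} e^{t \nu \Delta} \varphi \in L^{p} \left( \R^{n} \right)$ once the right-hand side is seen to be finite. For finiteness I would note that $\re \nu >0$ forces $\re \left( 1/\nu \right) = \re \nu / \abs{\nu}^{2} >0$, so $G_{\nu}$ decays like a genuine Gaussian and hence $G_{\nu}$ together with all of its derivatives $\partial^{\alpha} G_{\nu}$ is a Schwartz function; in particular $\partial^{\alpha} G_{\nu} \in L^{r} \left( \R^{n} \right)$ and $\norm{\partial^{\alpha} G_{\nu}}_{r} <+ \infty$ for every $r \in \left[ 1, + \infty \right]$. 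To extract the power of $t$, I would use property (ii) to compute
\[
	\norm{\delta_{t} \left( \partial^{\alpha} G_{\nu} \right)}_{r} = t^{- \frac{n}{2} \left( 1- \frac{1}{r} \right)} \norm{\partial^{\alpha} G_{\nu}}_{r},
\]
and then rewrite the exponent via the Young relation, which gives $1-1/r=1/q-1/p$. Multiplying by the prefactor $t^{- \abs{\alpha} /2}$ from \eqref{eq:CGL_derivative} collects the total power $t^{- \frac{n}{2} \left( 1/q-1/p \right) - \abs{\alpha} /2}$ and produces exactly the asserted inequality.

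There is essentially no obstacle in this argument: all of its content has been front-loaded into the algebraic identity \eqref{eq:CGL_derivative}, and what remains is a bookkeeping step matching the Young exponent relation against the dilation scaling. The only points requiring mild care are the elementary verification that $1-1/r=1/q-1/p$ under the constraint $1/p+1=1/r+1/q$, and the observation that the complex parameter $\nu$ does not destroy the Schwartz character of the Gaussian kernel, which is guaranteed precisely by the assumption $\re \nu >0$.
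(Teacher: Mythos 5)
Your proposal is correct and follows essentially the same route as the paper's proof: rewrite $\partial^{\alpha} e^{t \nu \Delta} \varphi$ via \eqref{eq:CGL_derivative}, apply Young's inequality with $1/p+1=1/r+1/q$, and use the scaling property (ii) of $\delta_{t}$ together with $1-1/r=1/q-1/p$ to collect the power of $t$. Your additional observation that $\re \nu >0$ makes $\partial^{\alpha} G_{\nu}$ integrable in every $L^{r}$ is exactly the content of the paper's remark following the lemma, so nothing is missing.
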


\begin{rem}
Since $\re \nu >0$, we see that $\partial^{\alpha} G_{\nu} \in L^{r} \left( \R^{n} \right)$ for any $r \in \left[ 1, + \infty \right]$ and $\alpha \in \Z_{\geq 0}^{n}$.
See also \eqref{eq:Gauss_derivative} in the next subsection.
\end{rem}

\begin{proof}[Proof of Lemma \ref{lem:CGL_Lp-Lq}]
	By \eqref{eq:CGL_derivative} and Young's inequality, we have
	\begin{align*}
		\norm{\partial^{\alpha} e^{t \nu \Delta} \varphi}_{p}
		&=t^{- \frac{\abs{\alpha}}{2}} \norm{\left( \delta_{t} \left( \partial^{\alpha} G_{\nu} \right) \right) \ast \varphi}_{p} \\
		&\leq t^{- \frac{\abs{\alpha}}{2}} \norm{\delta_{t} \left( \partial^{\alpha} G_{\nu} \right)}_{r} \norm{\varphi}_{q} \\
		&=t^{- \frac{n}{2} \left( 1- \frac{1}{r} \right) - \frac{\abs{\alpha}}{2}} \norm{\partial^{\alpha} G_{\nu}}_{r} \norm{\varphi}_{q} \\
		&=t^{- \frac{n}{2} \left( \frac{1}{q} - \frac{1}{p} \right) - \frac{\abs{\alpha}}{2}} \norm{\partial^{\alpha} G_{\nu}}_{r} \norm{\varphi}_{q},
	\end{align*}
	which in turn implies $\partial^{\alpha} e^{t \nu \Delta} \varphi \in L^{p} \left( \R^{n} \right)$.
\end{proof}

\subsection{Asymptotic expansions} \label{sec:CGL_asymptotics}

We next consider asymptotic expansions of the CGL semigroup.
To begin with, we introduce the Hermite polynomials with a complex coefficient \cite{Ozawa, Kusaba-Ozawa}.
For each $k \in \Z_{\geq 0}$, we define the Hermite polynomial of order $k$ by
\begin{align*}
	H_{\nu, k} \left( x \right) \coloneqq \left( -1 \right)^{k} \exp \left( \frac{x^{2}}{\nu} \right) \left( \frac{d}{dx} \right)^{k} \exp \left( - \frac{x^{2}}{\nu} \right), \qquad x \in \R.
\end{align*}
We remark that if $\nu =1$, then $H_{1, k}$ is the usual Hermite polynomial of order $k$.
By a simple calculation, we obtain the following representation of $H_{\nu, k}$:
\begin{align}
	\label{eq:Hermite_sum}
	H_{\nu, k} \left( x \right) = \sum_{j=0}^{\left[ k /2 \right]} \frac{\left( -1 \right)^{j} k !}{j! \left( k -2j \right) !} \nu^{- \left( k - j \right)} \left( 2x \right)^{k -2j},
\end{align}
where $\left[ k /2 \right] \coloneqq \max \left\{ j \in \Z_{\geq 0}; {\,} j \leq k /2 \right\}$.
In addition, for each $\alpha = \left( \alpha_{1}, \cdots, \alpha_{n} \right) \in \Z_{\geq 0}^{n}$, we define the multi-variable Hermite polynomial of order $\alpha$ by $\bm{H}_{\nu, \alpha} =H_{\nu, \alpha_{1}} \otimes \cdots \otimes H_{\nu, \alpha_{n}}$, namely,
\begin{align*}
	\bm{H}_{\nu, \alpha} \left( x \right) \coloneqq \prod_{j=1}^{n} H_{\nu, \alpha_{j}} \left( x_{j} \right) = \left( -1 \right)^{\abs{\alpha}} \exp \left( \frac{\abs{x}^{2}}{\nu} \right) \partial^{\alpha} \exp \left( - \frac{\abs{x}^{2}}{\nu} \right), \qquad x= \left( x_{1}, \ldots, x_{n} \right) \in \R^{n}.
\end{align*}
Then, it follows from \eqref{eq:Hermite_sum} that
\begin{align*}
	\bm{H}_{\nu, \alpha} \left( x \right)
	&= \prod_{j=1}^{n} \sum_{\beta_{j} =0}^{\left[ \alpha_{j} /2 \right]} \frac{\left( -1 \right)^{\beta_{j}} \alpha_{j} !}{\beta_{j} ! \left( \alpha_{j} -2 \beta_{j} \right) !} \nu^{- \left( \alpha_{j} - \beta_{j} \right)} \left( 2x_{j} \right)^{\alpha_{j} -2 \beta_{j}} \nonumber \\
	&= \sum_{2 \beta \leq \alpha} \frac{\left( -1 \right)^{\abs{\beta}} \alpha !}{\beta ! \left( \alpha -2 \beta \right) !} \nu^{- \abs{\alpha - \beta}} \left( 2x \right)^{\alpha -2 \beta}.
\end{align*}
Using the multi-variable Hermite polynomial $\bm{H}_{\nu, \alpha}$, we have
\begin{align*}
	\partial^{\alpha} G_{\nu} \left( x \right)
	&= \left( 4 \pi \nu \right)^{- \frac{n}{2}} \partial^{\alpha} \exp \left( - \frac{1}{\nu} \abs{\frac{x}{2}}^{2} \right) \\
	&=2^{- \abs{\alpha}} \left( 4 \pi \nu \right)^{- \frac{n}{2}} \left[ \partial_{y}^{\alpha} \exp \left( - \frac{\abs{y}^{2}}{\nu} \right) \right]_{y= \frac{x}{2}} \\
	&=2^{- \abs{\alpha}} \left( -1 \right)^{- \abs{\alpha}} \left( 4 \pi \nu \right)^{- \frac{n}{2}} \exp \left( - \frac{1}{\nu} \abs{\frac{x}{2}}^{2} \right) \left[ \left( -1 \right)^{\abs{\alpha}} \exp \left( \frac{\abs{y}^{2}}{\nu} \right) \partial_{y}^{\alpha} \exp \left( - \frac{\abs{y}^{2}}{\nu} \right) \right]_{y= \frac{x}{2}} \\
	&= \left( -2 \right)^{- \abs{\alpha}} G_{\nu} \left( x \right) \bm{H}_{\nu, \alpha} \left( \frac{x}{2} \right).
\end{align*}
Thus, by setting
\begin{align*}
	\bm{h}_{\nu, \alpha} \left( x \right) \coloneqq \bm{H}_{\nu, \alpha} \left( \frac{x}{2} \right) = \sum_{2 \beta \leq \alpha} \frac{\left( -1 \right)^{\abs{\beta}} \alpha !}{\beta ! \left( \alpha -2 \beta \right) !} \nu^{- \abs{\alpha - \beta}} x^{\alpha -2 \beta}, \qquad x \in \R^{n},
\end{align*}
we can rewrite $\partial^{\alpha} G_{\nu}$ as
\begin{align}
	\label{eq:Gauss_derivative}
	\partial^{\alpha} G_{\nu} = \left( -2 \right)^{- \abs{\alpha}} \bm{h}_{\nu, \alpha} G_{\nu}.
\end{align}
Combining \eqref{eq:Gauss_self_dilation} and \eqref{eq:Gauss_derivative}, we obtain
\begin{align*}
	\partial^{\alpha} G_{t \nu} = \partial^{\alpha} \left( \delta_{t} G_{\nu} \right) =t^{- \frac{\abs{\alpha}}{2}} \delta_{t} \left( \partial^{\alpha} G_{\nu} \right) = \left( -2 \right)^{- \abs{\alpha}} t^{- \frac{\abs{\alpha}}{2}} \delta_{t} \left( \bm{h}_{\nu, \alpha} G_{\nu} \right)
\end{align*}
for any $t>0$.

We also use the translation $\tau_{h}$ by $h \in \R^{n}$ defined as
\begin{align*}
	\left( \tau_{h} \varphi \right) \left( x \right) \coloneqq \varphi \left( x-h \right), \qquad \varphi \in L^{1}_{\text{loc}} \left( \R^{n} \right), {\ } x \in \R^{n}.
\end{align*}
We easily see that $\norm{\tau_{h} \varphi}_{q} = \norm{\varphi}_{q}$ for any $q \in \left[ 1, + \infty \right]$ and $\varphi \in L^{q} \left( \R^{n} \right)$.

Now, we are ready to state asymptotic expansions of the CGL semigroup.

\begin{prop} \label{pro:CGL_asymptotics}
	Let $m \in \Z_{\geq 0}$, $\varphi \in L^{1}_{m+1} \left( \R^{n} \right)$, $q \in \left[ 1, + \infty \right]$, and $\alpha \in \Z_{\geq 0}^{n}$.
	Then, the estimate
	\begin{align*}
		t^{\frac{n}{2} \left( 1- \frac{1}{q} \right) + \frac{\abs{\alpha} +m}{2}} \norm{\partial^{\alpha} e^{t \nu \Delta} \varphi - \Lambda_{\alpha, m} \left( t; \varphi \right)}_{q} \leq 2^{- \left( \abs{\alpha} +m+1 \right)} t^{- \frac{1}{2}} \sum_{\abs{\beta} =m+1} \frac{1}{\beta !} \norm{\bm{h}_{\nu, \alpha + \beta} G_{\nu}}_{q} \lVert x^{\beta} \varphi \rVert_{1}
	\end{align*}
	holds for any $t>0$, where
	\begin{align*}
		\Lambda_{\alpha, m} \left( t; \varphi \right) &\coloneqq \left( -2 \right)^{- \abs{\alpha}} t^{- \frac{\abs{\alpha}}{2}} \sum_{k=0}^{m} 2^{-k} t^{- \frac{k}{2}} \sum_{\abs{\beta} =k} \MM_{\beta} \left( \varphi \right) \delta_{t} \left( \bm{h}_{\nu, \alpha + \beta} G_{\nu} \right), \\
		\MM_{\beta} \left( \varphi \right) &\coloneqq \frac{1}{\beta !} \int_{\R^{n}} y^{\beta} \varphi \left( y \right) dy.
	\end{align*}
\end{prop}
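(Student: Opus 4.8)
The plan is to start from the convolution representation
\begin{align*}
	\partial^{\alpha} e^{t \nu \Delta} \varphi \left( x \right) = \int_{\R^{n}} \left( \partial^{\alpha} G_{t \nu} \right) \left( x-y \right) \varphi \left( y \right) dy
\end{align*}
and to Taylor-expand the kernel $\left( \partial^{\alpha} G_{t \nu} \right) \left( x-y \right)$ in the space variable $y$ about $y=0$ up to order $m$. The cleanest way to organize the remainder is to apply the one-dimensional Taylor formula with integral remainder to the auxiliary function $\theta \mapsto \left( \partial^{\alpha} G_{t \nu} \right) \left( x- \theta y \right)$ on $\left[ 0, 1 \right]$, whose $j$-th derivative equals $\sum_{\abs{\beta} =j} \frac{j!}{\beta !} \left( -y \right)^{\beta} \left( \partial^{\alpha + \beta} G_{t \nu} \right) \left( x- \theta y \right)$. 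Evaluating at $\theta =1$ yields
\begin{align*}
	\left( \partial^{\alpha} G_{t \nu} \right) \left( x-y \right) = \sum_{\abs{\beta} \leq m} \frac{\left( -y \right)^{\beta}}{\beta !} \left( \partial^{\alpha + \beta} G_{t \nu} \right) \left( x \right) + \sum_{\abs{\beta} =m+1} \frac{\left( m+1 \right) \left( -y \right)^{\beta}}{\beta !} \int_{0}^{1} \left( 1- \theta \right)^{m} \left( \partial^{\alpha + \beta} G_{t \nu} \right) \left( x- \theta y \right) d \theta.
\end{align*}

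Next I would multiply by $\varphi \left( y \right)$ and integrate over $y$. The contribution of the polynomial part is $\sum_{\abs{\beta} \leq m} \left( -1 \right)^{\abs{\beta}} \MM_{\beta} \left( \varphi \right) \partial^{\alpha + \beta} G_{t \nu}$. Substituting the identity $\partial^{\alpha + \beta} G_{t \nu} = \left( -2 \right)^{- \abs{\alpha} - \abs{\beta}} t^{- \left( \abs{\alpha} + \abs{\beta} \right) /2} \delta_{t} \left( \bm{h}_{\nu, \alpha + \beta} G_{\nu} \right)$ established in this subsection, together with the elementary identity $\left( -1 \right)^{\abs{\beta}} \left( -2 \right)^{- \abs{\alpha} - \abs{\beta}} = \left( -2 \right)^{- \abs{\alpha}} 2^{- \abs{\beta}}$, collapses this part exactly into $\Lambda_{\alpha, m} \left( t; \varphi \right)$ once the multi-indices $\beta$ are grouped by their length $k= \abs{\beta}$. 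Hence the difference $\partial^{\alpha} e^{t \nu \Delta} \varphi - \Lambda_{\alpha, m} \left( t; \varphi \right)$ is precisely the $y$-integral of the Taylor remainder, and it remains only to bound its $L^{q}$ norm.

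For the remainder estimate, I would take the $L^{q} \left( \R^{n} \right)$ norm in $x$ and move it inside the $y$- and $\theta$-integrals by Minkowski's integral inequality. The crucial simplification is that translation is an $L^{q}$-isometry, so $\norm{\left( \partial^{\alpha + \beta} G_{t \nu} \right) \left( \,\cdot\, - \theta y \right)}_{q} = \norm{\partial^{\alpha + \beta} G_{t \nu}}_{q}$ is independent of $\theta$ and $y$; the $\theta$-integral then reduces to $\int_{0}^{1} \left( 1- \theta \right)^{m} d \theta = \left( m+1 \right)^{-1}$, cancelling the prefactor $m+1$, while the $y$-integral of $\abs{y^{\beta} \varphi \left( y \right)}$ is exactly $\norm{x^{\beta} \varphi}_{1}$. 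Finally, computing $\norm{\partial^{\alpha + \beta} G_{t \nu}}_{q} = 2^{- \abs{\alpha} - \left( m+1 \right)} t^{- \left( \abs{\alpha} +m+1 \right) /2} t^{- \frac{n}{2} \left( 1- \frac{1}{q} \right)} \norm{\bm{h}_{\nu, \alpha + \beta} G_{\nu}}_{q}$ from the dilation identity and property (ii) of $\delta_{t}$, and multiplying through by the announced weight $t^{\frac{n}{2} \left( 1- \frac{1}{q} \right) + \left( \abs{\alpha} +m \right) /2}$, produces the stated bound with the residual factor $t^{-1/2}$.

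The computations are essentially bookkeeping of multi-index factorials and powers of $t$; the only points requiring genuine care are the justification of the Fubini-type interchanges and the finiteness of all moments, both of which follow from $\varphi \in L^{1}_{m+1} \left( \R^{n} \right)$ (note $L^{1}_{m+1} \subseteq L^{1}_{m}$, so the lower-order moments $\MM_{\beta} \left( \varphi \right)$ with $\abs{\beta} \leq m$ are finite) and from the fact that $\re \nu >0$ makes every $\bm{h}_{\nu, \gamma} G_{\nu}$ lie in $L^{q} \left( \R^{n} \right)$ for all $q$. I expect the main, though still modest, obstacle to be verifying that the polynomial part reassembles precisely into $\Lambda_{\alpha, m} \left( t; \varphi \right)$, that is, tracking the sign and power-of-two factors correctly through the Hermite-polynomial representation of $\partial^{\alpha + \beta} G_{t \nu}$.
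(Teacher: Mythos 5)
Your proposal is correct and follows essentially the same route as the paper's proof: Taylor expansion of the kernel $\left( \partial^{\alpha} G_{t \nu} \right) \left( x-y \right)$ in $y$ with integral remainder, identification of the polynomial part with $\Lambda_{\alpha, m} \left( t; \varphi \right)$ via the identity $\partial^{\alpha + \beta} G_{t \nu} = \left( -2 \right)^{- \abs{\alpha + \beta}} t^{- \abs{\alpha + \beta} /2} \delta_{t} \left( \bm{h}_{\nu, \alpha + \beta} G_{\nu} \right)$, and estimation of the remainder using translation invariance of the $L^{q}$-norm, the dilation scaling, and the cancellation $\left( m+1 \right) \int_{0}^{1} \left( 1- \theta \right)^{m} d \theta =1$. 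The sign and factorial bookkeeping you flag as the main point of care indeed works out exactly as you describe.
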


\begin{proof}
	Let $t>0$.
	By Taylor's theorem, we have
	\begin{align*}
		&\left( \partial^{\alpha} G_{t \nu} \right) \left( x-y \right) \\
		&\hspace{0.5cm} = \sum_{\abs{\beta} \leq m} \frac{1}{\beta !} \left( -y \right)^{\beta} ( \partial^{\alpha + \beta} G_{t \nu} ) \left( x \right) + \sum_{\abs{\beta} =m+1} \frac{m+1}{\beta !} \int_{0}^{1} \left( 1- \theta \right)^{m} \left( -y \right)^{\beta} ( \partial^{\alpha + \beta} G_{t \nu} ) \left( x- \theta y \right) d \theta \\
		&\hspace{0.5cm} = \left( -2 \right)^{- \abs{\alpha}} t^{- \frac{\abs{\alpha}}{2}} \sum_{k=0}^{m} 2^{-k} t^{- \frac{k}{2}} \sum_{\abs{\beta} =k} \frac{1}{\beta !} y^{\beta} \left( \delta_{t} \left( \bm{h}_{\nu, \alpha + \beta} G_{\nu} \right) \right) \left( x \right) \\
		&\hspace{1.5cm} + \left( -1 \right)^{- \abs{\alpha}} 2^{- \left( \abs{\alpha} +m+1 \right)} t^{- \frac{\abs{\alpha} +m+1}{2}} \sum_{\abs{\beta} =m+1} \frac{m+1}{\beta !} \int_{0}^{1} \left( 1- \theta \right)^{m} y^{\beta} \left( \tau_{\theta y} \left( \delta_{t} \left( \bm{h}_{\nu, \alpha + \beta} G_{\nu} \right) \right) \right) \left( x \right) d \theta.
	\end{align*}
	Hence, the remainder is represented as
	\begin{align*}
		&\left( \partial^{\alpha} e^{t \nu \Delta} \varphi - \Lambda_{\alpha, m} \left( t; \varphi \right) \right) \left( x \right) \\
		&\hspace{1cm} = \int_{\R^{n}} \left( \left( \partial^{\alpha} G_{t \nu} \right) \left( x-y \right) - \left( -2 \right)^{- \abs{\alpha}} t^{- \frac{\abs{\alpha}}{2}} \sum_{k=0}^{m} 2^{-k} t^{- \frac{k}{2}} \sum_{\abs{\beta} =k} \frac{1}{\beta !} y^{\beta} \left( \delta_{t} \left( \bm{h}_{\nu, \alpha + \beta} G_{\nu} \right) \right) \left( x \right) \right) \varphi \left( y \right) dy \\
		&\hspace{1cm} = \left( -1 \right)^{- \abs{\alpha}} 2^{- \left( \abs{\alpha} +m+1 \right)} t^{- \frac{\abs{\alpha} +m+1}{2}} \\
		&\hspace{2cm} \times \sum_{\abs{\beta} =m+1} \frac{m+1}{\beta !} \int_{\R^{n}} \int_{0}^{1} \left( 1- \theta \right)^{m} \left( \tau_{\theta y} \left( \delta_{t} \left( \bm{h}_{\nu, \alpha + \beta} G_{\nu} \right) \right) \right) \left( x \right) y^{\beta} \varphi \left( y \right) d \theta dy.
	\end{align*}
	Taking $L^{q}$-norm of the above identity yields
	\begin{align*}
		&\norm{\partial^{\alpha} e^{t \nu \Delta} \varphi - \Lambda_{\alpha, m} \left( t; \varphi \right)}_{q} \\
		&\hspace{1cm} \leq 2^{- \left( \abs{\alpha} +m+1 \right)} t^{- \frac{\abs{\alpha} +m+1}{2}} \sum_{\abs{\beta} =m+1} \frac{m+1}{\beta !} \int_{\R^{n}} \int_{0}^{1} \left( 1- \theta \right)^{m} \norm{\tau_{\theta y} \left( \delta_{t} \left( \bm{h}_{\nu, \alpha + \beta} G_{\nu} \right) \right)}_{q} \lvert y^{\beta} \varphi \left( y \right) \rvert {\,} d \theta dy \\
		&\hspace{1cm} =2^{- \left( \abs{\alpha} +m+1 \right)} t^{- \frac{\abs{\alpha} +m+1}{2}} \sum_{\abs{\beta} =m+1} \frac{m+1}{\beta !} \int_{\R^{n}} \int_{0}^{1} \left( 1- \theta \right)^{m} \norm{\delta_{t} \left( \bm{h}_{\nu, \alpha + \beta} G_{\nu} \right)}_{q} \lvert y^{\beta} \varphi \left( y \right) \rvert {\,} d \theta dy \\
		&\hspace{1cm} =2^{- \left( \abs{\alpha} +m+1 \right)} t^{- \frac{n}{2} \left( 1- \frac{1}{q} \right) - \frac{\abs{\alpha} +m+1}{2}} \sum_{\abs{\beta} =m+1} \frac{1}{\beta !} \int_{\R^{n}} \norm{\bm{h}_{\nu, \alpha + \beta} G_{\nu}}_{q} \lvert y^{\beta} \varphi \left( y \right) \rvert {\,} dy \\
		&\hspace{1cm} =2^{- \left( \abs{\alpha} +m+1 \right)} t^{- \frac{n}{2} \left( 1- \frac{1}{q} \right) - \frac{\abs{\alpha} +m+1}{2}} \sum_{\abs{\beta} =m+1} \frac{1}{\beta !} \norm{\bm{h}_{\nu, \alpha + \beta} G_{\nu}}_{q} \lVert x^{\beta} \varphi \rVert_{1}.
	\end{align*}
\end{proof}

In this paper, the profile $\Lambda_{\alpha, m} \left( t; \varphi \right)$, which comes from the $m$-th order Taylor expansion of $\partial^{\alpha} G_{t \nu}$, is called the $m$-th order asymptotic profile of $\partial^{\alpha} e^{t \nu \Delta} \varphi$.
For $\varphi \in L_{m}^{1} \left( \R^{n} \right)$, although we cannot apply Proposition \ref{pro:CGL_asymptotics} to obtain a decay estimate of the remainder $\partial^{\alpha} e^{t \nu \Delta} \varphi - \Lambda_{\alpha, m} \left( t; \varphi \right)$, we can deduce that the remainder vanishes faster than $t^{- \frac{n}{2} \left( 1- \frac{1}{q} \right) - \frac{\abs{\alpha} +m}{2}}$ in $L^{q} \left( \R^{n} \right)$ as $t \to + \infty$.

\begin{prop} \label{pro:CGL_asymptotics_lim}
	Let $m \in \Z_{\geq 0}$, $\varphi \in L_{m}^{1} \left( \R^{n} \right)$, $q \in \left[ 1, + \infty \right]$, and $\alpha \in \Z_{\geq 0}^{n}$.
	Then,
	\begin{align*}
		\lim_{t \to + \infty} t^{\frac{n}{2} \left( 1- \frac{1}{q} \right) + \frac{\abs{\alpha} +m}{2}} \norm{\partial^{\alpha} e^{t \nu \Delta} \varphi - \Lambda_{\alpha, m} \left( t; \varphi \right)}_{q} =0,
	\end{align*}
	where $\Lambda_{\alpha, m} \left( t; \varphi \right)$ is defined in Proposition \ref{pro:CGL_asymptotics}.
\end{prop}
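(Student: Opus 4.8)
The plan is to reduce the whole statement to a single rescaled remainder and then to extract the decay from an order-$m$ Taylor estimate combined with the continuity of translations in $L^{q}$. The essential point, and the main obstacle, is that $\varphi$ lies only in $L^{1}_{m} \left( \R^{n} \right)$ and not in $L^{1}_{m+1} \left( \R^{n} \right)$, so the Lagrange-type remainder of Proposition \ref{pro:CGL_asymptotics}, which consumes one extra moment, is unavailable; the decay must instead be wrung out of the \emph{merely} $m$-th moment.

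First I would exploit the self-similarity. Setting $\Phi \coloneqq \partial^{\alpha} G_{\nu}$, which is a Schwartz function because $\re \nu >0$, the scaling relation behind \eqref{eq:CGL_derivative} gives $\partial^{\alpha + \beta} G_{t \nu} =t^{- \left( \abs{\alpha} + \abs{\beta} \right) /2} \delta_{t} \left( \partial^{\beta} \Phi \right)$, so that $\partial^{\alpha} e^{t \nu \Delta} \varphi - \Lambda_{\alpha, m} \left( t; \varphi \right)$ is exactly the convolution of $\varphi$ against the $m$-th order Taylor remainder of $\partial^{\alpha} G_{t \nu}$ about the origin. Carrying out the parabolic change of variables $x=t^{1/2} \xi$ and $y=t^{1/2} z$, I expect every power of $t$ to cancel except one, leaving the clean identity
\begin{align*}
	t^{\frac{n}{2} \left( 1- \frac{1}{q} \right) + \frac{\abs{\alpha} +m}{2}} \norm{\partial^{\alpha} e^{t \nu \Delta} \varphi - \Lambda_{\alpha, m} \left( t; \varphi \right)}_{q} =t^{\frac{m}{2}} \norm{\int_{\R^{n}} \mathcal{R}_{m} \left[ \Phi \right] \left( \,\cdot\,, t^{-1/2} y \right) \varphi \left( y \right) dy}_{q},
\end{align*}
where $\mathcal{R}_{m} \left[ \Phi \right] \left( \xi, z \right) \coloneqq \Phi \left( \xi -z \right) - \sum_{\abs{\beta} \leq m} \frac{\left( -z \right)^{\beta}}{\beta !} \left( \partial^{\beta} \Phi \right) \left( \xi \right)$ is the order-$m$ Taylor remainder.

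The crux is to bound $\mathcal{R}_{m} \left[ \Phi \right] \left( \,\cdot\,, z \right)$ in $L^{q}_{\xi}$ using only $m$-th derivatives. Rather than the Lagrange form, I would write the order-$m$ remainder as a difference of $m$-th derivatives, namely for $m \geq 1$
\begin{align*}
	\mathcal{R}_{m} \left[ \Phi \right] \left( \xi, z \right) = \frac{1}{\left( m-1 \right) !} \int_{0}^{1} \left( 1- \theta \right)^{m-1} \sum_{\abs{\beta} =m} \frac{m!}{\beta !} \left( -z \right)^{\beta} \left[ \left( \partial^{\beta} \Phi \right) \left( \xi - \theta z \right) - \left( \partial^{\beta} \Phi \right) \left( \xi \right) \right] d \theta
\end{align*}
and $\mathcal{R}_{0} \left[ \Phi \right] \left( \xi, z \right) = \Phi \left( \xi -z \right) - \Phi \left( \xi \right)$ when $m=0$. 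Taking $L^{q}_{\xi}$-norms, using $\abs{z^{\beta}} \leq \abs{z}^{\abs{\beta}}$ and the monotonicity of the modulus of continuity, this should yield a bound of the form $\norm{\mathcal{R}_{m} \left[ \Phi \right] \left( \,\cdot\,, z \right)}_{q} \leq \abs{z}^{m} \Omega \left( \abs{z} \right)$ with $\Omega \left( r \right) \coloneqq C \sum_{\abs{\beta} =m} \sup_{\abs{h} \leq r} \norm{\tau_{h} \partial^{\beta} \Phi - \partial^{\beta} \Phi}_{q}$, which is bounded and satisfies $\Omega \left( r \right) \to 0$ as $r \to 0^{+}$; the latter is the continuity of translation in $L^{q} \left( \R^{n} \right)$ for $q \in \left[ 1, + \infty \right)$ and the uniform continuity of the Schwartz function $\partial^{\beta} \Phi$ for $q=+ \infty$.

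Finally I would conclude by Minkowski's integral inequality followed by dominated convergence. The two estimates combine to
\begin{align*}
	t^{\frac{m}{2}} \norm{\int_{\R^{n}} \mathcal{R}_{m} \left[ \Phi \right] \left( \,\cdot\,, t^{-1/2} y \right) \varphi \left( y \right) dy}_{q} \leq \int_{\R^{n}} \abs{y}^{m} \Omega \left( t^{-1/2} \abs{y} \right) \abs{\varphi \left( y \right)} dy,
\end{align*}
and since $\varphi \in L^{1}_{m} \left( \R^{n} \right)$ the integrand is dominated by $\bigl( \sup_{r} \Omega \left( r \right) \bigr) \abs{y}^{m} \abs{\varphi \left( y \right)} \in L^{1} \left( \R^{n} \right)$ while $\Omega \left( t^{-1/2} \abs{y} \right) \to 0$ pointwise as $t \to + \infty$, so the right-hand side tends to $0$, which is the claim. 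The decisive ingredient is precisely the order-$m$ remainder estimate with the vanishing modulus $\Omega$: it is what replaces the extra moment of Proposition \ref{pro:CGL_asymptotics} and simultaneously delivers the \emph{strict} improvement over the rate $t^{- \frac{n}{2} \left( 1- 1/q \right) - \frac{\abs{\alpha} +m}{2}}$. As a remark, a softer $\varepsilon /3$ route is also available, approximating $\varphi$ in $L^{1}_{m} \left( \R^{n} \right)$ by functions in $L^{1}_{m+1} \left( \R^{n} \right)$, applying Proposition \ref{pro:CGL_asymptotics} to the approximation and a uniform-in-$t$ version of the bound above to the tail; the direct scaling argument is cleaner and avoids the density step.
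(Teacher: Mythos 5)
Your proof is correct and is essentially the paper's own argument: for $m \geq 1$ the paper likewise rewrites the order-$m$ Taylor remainder as an integral of translation differences $\tau_{t^{-1/2} \theta y} \left( \bm{h}_{\nu, \alpha + \beta} G_{\nu} \right) - \bm{h}_{\nu, \alpha + \beta} G_{\nu}$ over $\abs{\beta} =m$ (which, up to the constant $\left( -2 \right)^{- \abs{\alpha} -m}$, are exactly your differences $\left( \partial^{\beta} \Phi \right) \left( \xi - \theta z \right) - \left( \partial^{\beta} \Phi \right) \left( \xi \right)$ after the parabolic rescaling), bounds them by the same translation-continuity modulus, and concludes by dominated convergence using only the $m$-th moments of $\varphi$. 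The one minor difference is that the paper handles $m=0$ separately, by density of $C_{c} \left( \R^{n} \right)$ in $L^{1} \left( \R^{n} \right)$ together with Proposition \ref{pro:CGL_asymptotics} applied to the approximants, whereas your unified translation-modulus argument covers $m=0$ directly; this is a slight streamlining rather than a different method.
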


\begin{proof}
	We first show the case where $m=0$.
	Let $C_{c} \left( \R^{n} \right)$ denote the set of continuous functions on $\R^{n}$ with compact support.
	Since $C_{c} \left( \R^{n} \right)$ is dense in $L^{1} \left( \R^{n} \right)$, for given $\varphi \in L^{1} \left( \R^{n} \right)$, there exists a sequence $\left( \varphi_{j}; j \in \Z_{>0} \right)$ in $C_{c} \left( \R^{n} \right)$ such that $\norm{\varphi_{j} - \varphi}_{1} \to 0$ as $j \to + \infty$.
	In particular, $\varphi_{j} \in L_{1}^{1} \left( \R^{n} \right)$ for any $j \in \Z_{>0}$.
	By Lemma \ref{lem:CGL_Lp-Lq} and Proposition \ref{pro:CGL_asymptotics}, we have
	\begin{align*}
		&t^{\frac{n}{2} \left( 1- \frac{1}{q} \right) + \frac{\abs{\alpha}}{2}} \norm{\partial^{\alpha} e^{t \nu \Delta} \varphi - \Lambda_{\alpha, 0} \left( t; \varphi \right)}_{q} \\
		&\hspace{1cm} \leq t^{\frac{n}{2} \left( 1- \frac{1}{q} \right) + \frac{\abs{\alpha}}{2}} \norm{\partial^{\alpha} e^{t \nu \Delta} \left( \varphi - \varphi_{j} \right)}_{q} +t^{\frac{n}{2} \left( 1- \frac{1}{q} \right) + \frac{\abs{\alpha}}{2}} \norm{\partial^{\alpha} e^{t \nu \Delta} \varphi_{j} - \Lambda_{\alpha, 0} \left( t; \varphi_{j} \right)}_{q} \\
		&\hspace{2cm} +t^{\frac{n}{2} \left( 1- \frac{1}{q} \right) + \frac{\abs{\alpha}}{2}} \norm{\Lambda_{\alpha, 0} \left( t; \varphi_{j} \right) - \Lambda_{\alpha, 0} \left( t; \varphi \right)}_{q} \\
		&\hspace{1cm} \leq \norm{\partial^{\alpha} G_{\nu}}_{q} \norm{\varphi - \varphi_{j}}_{1} +2^{- \left( \abs{\alpha} +1 \right)} t^{- \frac{1}{2}} \sum_{\abs{\beta} =1} \frac{1}{\beta !} \norm{\bm{h}_{\nu, \alpha + \beta} G_{\nu}}_{q} \lVert x^{\beta} \varphi_{j} \rVert_{1} \\
		&\hspace{2cm} +2^{- \abs{\alpha}} t^{\frac{n}{2} \left( 1- \frac{1}{q} \right)} \abs{\MM_{0} \left( \varphi_{j} \right) - \MM_{0} \left( \varphi \right)} \norm{\delta_{t} \left( \bm{h}_{\nu, \alpha} G_{\nu} \right)}_{q} \\
		&\hspace{1cm} \leq 2^{1- \abs{\alpha}} \norm{\bm{h}_{\nu, \alpha} G_{\nu}}_{q} \norm{\varphi - \varphi_{j}}_{1} +2^{- \left( \abs{\alpha} +1 \right)} t^{- \frac{1}{2}} \sum_{\abs{\beta} =1} \frac{1}{\beta !} \norm{\bm{h}_{\nu, \alpha + \beta} G_{\nu}}_{q} \lVert x^{\beta} \varphi_{j} \rVert_{1}
	\end{align*}
	for any $t>0$, whence follows
	\begin{align*}
		\limsup_{t \to + \infty} t^{\frac{n}{2} \left( 1- \frac{1}{q} \right) + \frac{\abs{\alpha}}{2}} \norm{\partial^{\alpha} e^{t \nu \Delta} \varphi - \Lambda_{\alpha, 0} \left( t; \varphi \right)}_{q} \leq 2^{1- \abs{\alpha}} \norm{\bm{h}_{\nu, \alpha} G_{\nu}}_{q} \norm{\varphi - \varphi_{j}}_{1}.
	\end{align*}
	Since $\norm{\varphi - \varphi_{j}}_{1} \to 0$ as $j \to + \infty$, we obtain
	\begin{align*}
		\limsup_{t \to + \infty} t^{\frac{n}{2} \left( 1- \frac{1}{q} \right) + \frac{\abs{\alpha}}{2}} \norm{\partial^{\alpha} e^{t \nu \Delta} \varphi - \Lambda_{\alpha, 0} \left( t; \varphi \right)}_{q} =0,
	\end{align*}
	which implies the desired result.

	We next consider the case where $m \geq 1$.
	Let $t>0$.
	Similarly to the proof of Proposition \ref{pro:CGL_asymptotics}, applying Taylor's theorem yields
	\begin{align*}
		&\left( \partial^{\alpha} G_{t \nu} \right) \left( x-y \right) \\
		&\hspace{1cm} = \left( -2 \right)^{- \abs{\alpha}} t^{- \frac{\abs{\alpha}}{2}} \sum_{k=0}^{m-1} 2^{-k} t^{- \frac{k}{2}} \sum_{\abs{\beta} =k} \frac{1}{\beta !} y^{\beta} \left( \delta_{t} \left( \bm{h}_{\nu, \alpha + \beta} G_{\nu} \right) \right) \left( x \right) \\
		&\hspace{2cm} + \left( -1 \right)^{- \abs{\alpha}} 2^{- \left( \abs{\alpha} +m \right)} t^{- \frac{\abs{\alpha} +m}{2}} \sum_{\abs{\beta} =m} \frac{m}{\beta !} \int_{0}^{1} \left( 1- \theta \right)^{m-1} y^{\beta} \left( \tau_{\theta y} \left( \delta_{t} \left( \bm{h}_{\nu, \alpha + \beta} G_{\nu} \right) \right) \right) \left( x \right) d \theta \\
		&\hspace{1cm} = \left( -2 \right)^{- \abs{\alpha}} t^{- \frac{\abs{\alpha}}{2}} \sum_{k=0}^{m-1} 2^{-k} t^{- \frac{k}{2}} \sum_{\abs{\beta} =k} \frac{1}{\beta !} y^{\beta} \left( \delta_{t} \left( \bm{h}_{\nu, \alpha + \beta} G_{\nu} \right) \right) \left( x \right) \\
		&\hspace{2cm} + \left( -1 \right)^{- \abs{\alpha}} 2^{- \left( \abs{\alpha} +m \right)} t^{- \frac{\abs{\alpha} +m}{2}} \sum_{\abs{\beta} =m} \frac{m}{\beta !} \int_{0}^{1} \left( 1- \theta \right)^{m-1} y^{\beta} \left( \delta_{t} \left( \bm{h}_{\nu, \alpha + \beta} G_{\nu} \right) \right) \left( x \right) d \theta \\
		&\hspace{2cm} + \left( -1 \right)^{- \abs{\alpha}} 2^{- \left( \abs{\alpha} +m \right)} t^{- \frac{\abs{\alpha} +m}{2}} \\
		&\hspace{4cm} \times \sum_{\abs{\beta} =m} \frac{m}{\beta !} \int_{0}^{1} \left( 1- \theta \right)^{m-1} y^{\beta} \left( \tau_{\theta y} \left( \delta_{t} \left( \bm{h}_{\nu, \alpha + \beta} G_{\nu} \right) \right) - \delta_{t} \left( \bm{h}_{\nu, \alpha + \beta} G_{\nu} \right) \right) \left( x \right) d \theta \\
		&\hspace{1cm} = \left( -2 \right)^{- \abs{\alpha}} t^{- \frac{\abs{\alpha}}{2}} \sum_{k=0}^{m} 2^{-k} t^{- \frac{k}{2}} \sum_{\abs{\beta} =k} \frac{1}{\beta !} y^{\beta} \left( \delta_{t} \left( \bm{h}_{\nu, \alpha + \beta} G_{\nu} \right) \right) \left( x \right) \\
		&\hspace{2cm} + \left( -1 \right)^{- \abs{\alpha}} 2^{- \left( \abs{\alpha} +m \right)} t^{- \frac{\abs{\alpha} +m}{2}} \\
		&\hspace{4cm} \times \sum_{\abs{\beta} =m} \frac{m}{\beta !} \int_{0}^{1} \left( 1- \theta \right)^{m-1} y^{\beta} \left( \delta_{t} \left( \tau_{t^{-1/2} \theta y} \left( \bm{h}_{\nu, \alpha + \beta} G_{\nu} \right) - \bm{h}_{\nu, \alpha + \beta} G_{\nu} \right) \right) \left( x \right) d \theta.
	\end{align*}
	Thus, the remainder is represented as
	\begin{align*}
		&\left( \partial^{\alpha} e^{t \nu \Delta} \varphi - \Lambda_{\alpha, m} \left( t; \varphi \right) \right) \left( x \right) \\
		&\hspace{1cm} = \int_{\R^{n}} \left( \left( \partial^{\alpha} G_{t \nu} \right) \left( x-y \right) - \left( -2 \right)^{- \abs{\alpha}} t^{- \frac{\abs{\alpha}}{2}} \sum_{k=0}^{m} 2^{-k} t^{- \frac{k}{2}} \sum_{\abs{\beta} =k} \frac{1}{\beta !} y^{\beta} \left( \delta_{t} \left( \bm{h}_{\nu, \alpha + \beta} G_{\nu} \right) \right) \left( x \right) \right) \varphi \left( y \right) dy \\
		&\hspace{1cm} = \left( -1 \right)^{- \abs{\alpha}} 2^{- \left( \abs{\alpha} +m \right)} t^{- \frac{\abs{\alpha} +m}{2}} \\
		&\hspace{2cm} \times \sum_{\abs{\beta} =m} \frac{m}{\beta !} \int_{\R^{n}} \int_{0}^{1} \left( 1- \theta \right)^{m-1} \left( \delta_{t} \left( \tau_{t^{-1/2} \theta y} \left( \bm{h}_{\nu, \alpha + \beta} G_{\nu} \right) - \bm{h}_{\nu, \alpha + \beta} G_{\nu} \right) \right) \left( x \right) y^{\beta} \varphi \left( y \right) d \theta dy,
		\end{align*}
		whence follows
		\begin{align*}
		&t^{\frac{n}{2} \left( 1- \frac{1}{q} \right) + \frac{\abs{\alpha} +m}{2}} \norm{\partial^{\alpha} e^{t \nu \Delta} \varphi - \Lambda_{\alpha, m} \left( t; \varphi \right)}_{q} \\
		&\hspace{1cm} \leq 2^{- \left( \abs{\alpha} +m \right)} t^{\frac{n}{2} \left( 1- \frac{1}{q} \right)} \\
		&\hspace{2cm} \times \sum_{\abs{\beta} =m} \frac{m}{\beta !} \int_{\R^{n}} \int_{0}^{1} \left( 1- \theta \right)^{m-1} \norm{\delta_{t} \left( \tau_{t^{-1/2} \theta y} \left( \bm{h}_{\nu, \alpha + \beta} G_{\nu} \right) - \bm{h}_{\nu, \alpha + \beta} G_{\nu} \right)}_{q} \lvert y^{\beta} \varphi \left( y \right) \rvert {\,} d \theta dy \\
		&\hspace{1cm} \leq 2^{- \left( \abs{\alpha} +m \right)} \sum_{\abs{\beta} =m} \frac{m}{\beta !} \int_{\R^{n}} \int_{0}^{1} \norm{\tau_{t^{-1/2} \theta y} \left( \bm{h}_{\nu, \alpha + \beta} G_{\nu} \right) - \bm{h}_{\nu, \alpha + \beta} G_{\nu}}_{q} \lvert y^{\beta} \varphi \left( y \right) \rvert {\,} d \theta dy.
	\end{align*}
	Now, we define a function $\zeta_{q, \beta} \left( {\,\cdot\,}; t \right) \colon \left[ 0, 1 \right] \times \R^{n} \to \R$ by
	\begin{align*}
		\zeta_{q, \beta} \left( \theta, y; t \right) \coloneqq \norm{\tau_{t^{-1/2} \theta y} \left( \bm{h}_{\nu, \alpha + \beta} G_{\nu} \right) - \bm{h}_{\nu, \alpha + \beta} G_{\nu}}_{q}, \qquad \left( \theta, y \right) \in \left[ 0, 1 \right] \times \R^{n}
	\end{align*}
	for each $t>0$.
	Then, we have
	\begin{align*}
		\sup_{t>0} \sup_{0 \leq \theta \leq 1} \sup_{y \in \R^{n}} \zeta_{q, \beta} \left( \theta, y; t \right) &\leq 2 \norm{\bm{h}_{\nu, \alpha + \beta} G_{\nu}}_{q} <+ \infty.
	\end{align*}
	In addition, from the facts that the translation $\tau_{t^{-1/2} \theta y}$ is continuous in $L^{q} \left( \R^{n} \right)$ if $1 \leq q<+ \infty$ or that the function $\bm{h}_{\nu, \alpha + \beta} G_{\nu}$ is uniformly continuous on $\R^{n}$ if $q=+ \infty$, it follows that
	\begin{align*}
		\lim_{t \to + \infty} \zeta_{q, \beta} \left( \theta, y; t \right) =0
	\end{align*}
	for any $\left( \theta, y \right) \in \left[ 0, 1 \right] \times \R^{n}$.
	By taking into account the assumption that $\varphi \in L_{m}^{1} \left( \R^{n} \right)$, the dominated convergence theorem implies
	\begin{align*}
		\lim_{t \to + \infty} 2^{- \left( \abs{\alpha} +m \right)} \sum_{\abs{\beta} =m} \frac{m}{\beta !} \int_{\R^{n}} \int_{0}^{1} \zeta_{q, \beta} \left( \theta, y; t \right) \lvert y^{\beta} \varphi \left( y \right) \rvert {\,} d \theta dy=0.
	\end{align*}
	As a consequence, we obtain
	\begin{align*}
		\lim_{t \to + \infty} t^{\frac{n}{2} \left( 1- \frac{1}{q} \right) + \frac{\abs{\alpha} +m}{2}} \norm{\partial^{\alpha} e^{t \nu \Delta} \varphi - \Lambda_{\alpha, m} \left( t; \varphi \right)}_{q} =0.
	\end{align*}
\end{proof}

\subsection{Commutator estimates with respect to monomial weights} \label{sec:CGL_weight}

As we state in the introduction, we need commutator estimates between the CGL semigroup and monomial weights in $\R^{n}$ to overcome the difficulty arising from the complex scalar field and the dispersion in \eqref{P} for the proof of Theorem \ref{th:P_weight}.
To derive the estimates, we give explicit formulae of the commutation relations.

\begin{thm} \label{th:CGL_commutator}
	Let $m \in \Z_{>0}$, $\varphi \in L_{m}^{1} \left( \R^{n} \right)$, and $\alpha \in \Z_{\geq 0}^{n}$ with $\abs{\alpha} =m$.
	Then, $x^{\alpha} e^{t \nu \Delta} \varphi \in L^{1} \left( \R^{n} \right)$ and the identity
	\begin{align}
		\label{eq:CGL_commutator}
		\left[ x^{\alpha}, e^{t \nu \Delta} \right] \varphi =R_{\alpha} \left( t \right) \varphi
	\end{align}
	holds in $L^{1} \left( \R^{n} \right)$ for any $t>0$, where $\left[ x^{\alpha}, e^{t \nu \Delta} \right] \varphi \coloneqq x^{\alpha} e^{t \nu \Delta} \varphi -e^{t \nu \Delta} x^{\alpha} \varphi$ and
	\begin{align*}
		R_{\alpha} \left( t \right) \varphi \coloneqq \sum_{\substack{\beta + \gamma = \alpha \\ \beta \neq 0}} \frac{\alpha !}{\beta ! \gamma !} \left( -2t \nu \partial \right)^{\beta} e^{t \nu \Delta} x^{\gamma} \varphi + \sum_{\substack{\beta + \gamma \leq \alpha, {\ } \abs{\beta + \gamma} \leq \abs{\alpha} -2 \\ \abs{\beta} +1 \leq \ell \leq \frac{\abs{\alpha} + \abs{\beta} - \abs{\gamma}}{2}}} C_{\ell \beta \gamma}^{\alpha} \left( t \nu \right)^{\ell} \partial^{\beta} e^{t \nu \Delta} x^{\gamma} \varphi
	\end{align*}
	for some $C_{\ell \beta \gamma}^{\alpha} \in \R$ independent of $\nu$, $t$, $x$, and $\varphi$.
\end{thm}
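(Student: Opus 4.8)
The plan is to expand the monomial weight inside the convolution defining $e^{t\nu\Delta}$ and then convert the resulting polynomial-times-Gaussian factors back into derivatives of the semigroup. For fixed $x$, since $\varphi\in L^1\left(\R^n\right)$ and $G_{t\nu}$ is bounded, the integral $\int_{\R^n} x^\alpha G_{t\nu}\left(x-y\right)\varphi\left(y\right)\,dy$ converges absolutely, so I would insert the binomial identity $x^\alpha=\sum_{\beta+\gamma=\alpha}\binom{\alpha}{\beta}\left(x-y\right)^\beta y^\gamma$ under the integral sign and pull the finite sum out to obtain
\[
	x^\alpha e^{t\nu\Delta}\varphi = \sum_{\beta+\gamma=\alpha}\binom{\alpha}{\beta}\left(z^\beta G_{t\nu}\right)\ast\left(x^\gamma\varphi\right).
\]
Because $\re\nu>0$, the function $z^\beta G_{t\nu}$ decays like a Gaussian and hence lies in $L^1\left(\R^n\right)$ (cf.\ \eqref{eq:Gauss_derivative}), while $x^\gamma\varphi\in L^1\left(\R^n\right)$ for $\abs{\gamma}\leq m$, so every convolution is a genuine $L^1$-function. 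Isolating the term $\beta=0$, which equals $e^{t\nu\Delta}\left(x^\alpha\varphi\right)$, yields
\[
	\left[x^\alpha, e^{t\nu\Delta}\right]\varphi = \sum_{\substack{\beta+\gamma=\alpha\\\beta\neq 0}}\binom{\alpha}{\beta}\left(z^\beta G_{t\nu}\right)\ast\left(x^\gamma\varphi\right).
\]

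The heart of the argument is to rewrite $z^\beta G_{t\nu}$ as a linear combination of derivatives $\partial^\sigma G_{t\nu}$. Starting from $\partial_j G_{t\nu}=-\left(2t\nu\right)^{-1}x_j G_{t\nu}$, the Leibniz rule gives the Hermite-type recursion $x^{\gamma+e_j}G_{t\nu}=-2t\nu\,\partial_j\left(x^\gamma G_{t\nu}\right)+2t\nu\gamma_j\,x^{\gamma-e_j}G_{t\nu}$, where $e_j$ is the $j$-th unit multi-index. Since the Gaussian factorizes across coordinates, iterating this (or, equivalently, inverting the triangular Hermite basis in \eqref{eq:Gauss_derivative}) I would prove by induction on $\abs{\beta}$ that
\[
	z^\beta G_{t\nu} = \left(-2t\nu\right)^{\abs{\beta}}\partial^\beta G_{t\nu} + \sum_{\substack{\sigma\leq\beta,\ \sigma\equiv\beta\ \left(\mathrm{mod}\ 2\right)\\ \abs{\sigma}\leq\abs{\beta}-2}}c_{\beta\sigma}\,\left(t\nu\right)^{\frac{\abs{\beta}+\abs{\sigma}}{2}}\partial^\sigma G_{t\nu}
\]
for real constants $c_{\beta\sigma}$ independent of $\nu$, $t$, $x$. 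The decisive bookkeeping point is that each descent in derivative order by $2$ costs exactly one power of $t\nu$, pinning the exponent to $\left(\abs{\beta}+\abs{\sigma}\right)/2$, while the leading term retains derivative order $\abs{\beta}$ with exponent $\abs{\beta}$.

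Substituting this expansion and using $\left(\partial^\sigma G_{t\nu}\right)\ast\left(x^\gamma\varphi\right)=\partial^\sigma e^{t\nu\Delta}\left(x^\gamma\varphi\right)$, the leading terms $\sigma=\beta$ collect into $\sum_{\beta+\gamma=\alpha,\,\beta\neq 0}\frac{\alpha!}{\beta!\gamma!}\left(-2t\nu\partial\right)^\beta e^{t\nu\Delta}x^\gamma\varphi$, using $\binom{\alpha}{\beta}=\alpha!/\left(\beta!\gamma!\right)$ and $\left(-2t\nu\right)^{\abs{\beta}}\partial^\beta=\left(-2t\nu\partial\right)^\beta$; this is precisely the first sum of $R_\alpha\left(t\right)\varphi$. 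Relabeling the remaining derivative index $\sigma$ as the new $\beta$ while keeping $\gamma$ (so that the original weight index is $\alpha-\gamma$), the constraints $\sigma\leq\alpha-\gamma$ and $\abs{\sigma}\leq\abs{\alpha-\gamma}-2$ translate into $\beta+\gamma\leq\alpha$ and $\abs{\beta+\gamma}\leq\abs{\alpha}-2$, and the exponent becomes $\ell=\frac{\abs{\alpha}-\abs{\gamma}+\abs{\beta}}{2}$, which falls inside the stated range $\abs{\beta}+1\leq\ell\leq\frac{\abs{\alpha}+\abs{\beta}-\abs{\gamma}}{2}$ (the other values of $\ell$ simply carry zero coefficients). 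Finally, since $x^\gamma\varphi\in L^1\left(\R^n\right)$, Lemma \ref{lem:CGL_Lp-Lq} shows that every summand of $R_\alpha\left(t\right)\varphi$ and $e^{t\nu\Delta}\left(x^\alpha\varphi\right)$ lies in $L^1\left(\R^n\right)$, whence $x^\alpha e^{t\nu\Delta}\varphi=e^{t\nu\Delta}\left(x^\alpha\varphi\right)+R_\alpha\left(t\right)\varphi\in L^1\left(\R^n\right)$ and the identity \eqref{eq:CGL_commutator} follows. I expect the main obstacle to be the inductive expansion of $z^\beta G_{t\nu}$ and the attendant multi-index and parity bookkeeping, which must be carried out carefully to match both the exact power of $t\nu$ and the admissible index ranges in $R_\alpha\left(t\right)\varphi$.
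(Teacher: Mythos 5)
Your proof is correct, and it takes a genuinely different route from the paper's. The paper argues by induction on $m=\abs{\alpha}$: it first establishes the single-weight relation $\left[ x_{j}, e^{t \nu \Delta} \right] \varphi =-2t \nu \partial_{j} e^{t \nu \Delta} \varphi$ directly from the integral kernel, and then, assuming the formula for $\alpha$, multiplies by one more coordinate $x_{j}$ and re-sorts every resulting term into the template $R_{\alpha +e_{j}} \left( t \right) \varphi$, with a lengthy verification of the admissible index ranges. You instead expand the weight binomially under the convolution, $x^{\alpha} = \sum_{\beta + \gamma = \alpha} \binom{\alpha}{\beta} \left( x-y \right)^{\beta} y^{\gamma}$, and convert each factor $z^{\beta} G_{t \nu}$ back into derivatives of $G_{t \nu}$ via the Hermite-type recursion $x^{\gamma +e_{j}} G_{t \nu} =-2t \nu \partial_{j} \left( x^{\gamma} G_{t \nu} \right) +2t \nu \gamma_{j} x^{\gamma -e_{j}} G_{t \nu}$, i.e., you invert relation \eqref{eq:Gauss_derivative} rather than iterate the one-weight commutation. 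Your bookkeeping checks out: the componentwise parity constraint and the exponent $\left( \abs{\beta} + \abs{\sigma} \right) /2$ are stable under your induction on $\abs{\beta}$, and after relabeling, each resulting triple satisfies all the constraints of the second sum, with $\ell$ sitting exactly at the upper endpoint $\left( \abs{\alpha} + \abs{\beta} - \abs{\gamma} \right) /2$ --- which is harmless, since the theorem only asserts existence of the constants $C_{\ell \beta \gamma}^{\alpha}$, so the remaining values of $\ell$ get coefficient zero. The integrability justifications (absolute convergence for fixed $x$, then Young's inequality for each piece, using $\re \nu >0$ for the Gaussian decay of $z^{\beta} G_{t \nu}$) are adequate. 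What your route buys is a one-pass derivation with sharper structural output: only the extremal $\ell$ actually occurs, and the constants are explicit Hermite-type coefficients. What the paper's induction buys is the nested representation recorded in Remark \ref{rem:CGL_remainder_esti} --- that $x_{j} R_{\alpha} \left( t \right) \varphi$ is itself a part of $R_{\alpha +e_{j}} \left( t \right) \varphi$, together with the accompanying estimate --- which is exactly what is invoked later in the proof of Theorem \ref{th:P_weight}; a reader following your route would need to extract that statement separately (it does follow from your formula combined with the $m=1$ relation, but it is not automatic).
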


We remark that by virtue of the smoothing effect of the CGL semigroup, each term in $R_{\alpha} \left( t \right) \varphi$ belongs to $L^{1} \left( \R^{n} \right)$ although we do not impose the regularity on $\varphi$.
From this point of view, it is important that differential operators, the CGL semigroup, and monomial weights are in this order.
We also see that $R_{\alpha} \left( t \right) \varphi \in L^{1} \left( \R^{n} \right)$ for any $\alpha \in \Z_{\geq 0}^{n}$ with $\abs{\alpha} =m$ even if $\varphi \in L^{1}_{m-1} \left( \R^{n} \right)$ since monomial weights of order $m$, namely, $x^{\gamma}$ with $\abs{\gamma} =m$, do not appear in $R_{\alpha} \left( t \right) \varphi$.
This fact, which is emphasized quantitatively in the commutator estimates given by Theorem \ref{th:CGL_commutator_esti} below, plays an important role in the proof of Theorem \ref{th:P_weight}.
We cannot obtain such commutation relations for monotonic and positive weights like $\abs{x}^{m}$.
Therefore, monomial weights seem to be more proper to the CGL semigroup than monotonic and positive weights.
We follow the argument in \cite{Kusaba-Ozawa}.

Here and hereafter, $\left( e_{j}; j=1, \ldots, n \right)$ denotes the standard basis of $\R^{n}$.
That is, for each $j \in \left\{ 1, \ldots, n \right\}$, $e_{j} \in \R^{n}$ is a unit vector whose components are $0$ except for the $j$-th coordinate.

\begin{proof}[Proof of Theorem \ref{th:CGL_commutator}]
	We show the assertion by induction on $m \in \Z_{>0}$.
	First, we consider the case where $m=1$.
	Let $\varphi \in L_{1}^{1} \left( \R^{n} \right)$, $t>0$, and $\alpha \in \Z_{\geq 0}^{n}$ with $\abs{\alpha} =1$.
	Then, there exists $j \in \left\{ 1, \ldots, n \right\}$ such that $\alpha =e_{j}$.
	Since $\varphi$, $x_{j} \varphi \in L^{1} \left( \R^{n} \right)$, Lemma \ref{lem:CGL_Lp-Lq} implies $\partial_{j} e^{t \nu \Delta} \varphi$, $e^{t \nu \Delta} x_{j} \varphi \in L^{1} \left( \R^{n} \right)$.
	Moreover, by using the integral representation of the CGL semigroup, we have
	\begin{align*}
		\left( e^{t \nu \Delta} x_{j} \varphi -2t \nu \partial_{j} e^{t \nu \Delta} \varphi \right) \left( x \right)
		&= \int_{\R^{n}} G_{t \nu} \left( x-y \right) y_{j} \varphi \left( y \right) dy-2t \nu \int_{\R^{n}} \left( \partial_{j} G_{t \nu} \right) \left( x-y \right) \varphi \left( y \right) dy \\
		&= \int_{\R^{n}} G_{t \nu} \left( x-y \right) y_{j} \varphi \left( y \right) dy+ \int_{\R^{n}} \left( x_{j} -y_{j} \right) G_{t \nu} \left( x-y \right) \varphi \left( y \right) dy \\
		&=x_{j} \int_{\R^{n}} G_{t \nu} \left( x-y \right) \varphi \left( y \right) dy \\
		&=x_{j} \left( e^{t \nu \Delta} \varphi \right) \left( x \right).
	\end{align*}
	Hence, we deduce that $x_{j} e^{t \nu \Delta} \varphi \in L^{1} \left( \R^{n} \right)$ and
	\begin{align*}
		\left[ x_{j}, e^{t \nu \Delta} \right] \varphi =-2t \nu \partial_{j} e^{t \nu \Delta} \varphi =R_{e_{j}} \left( t \right) \varphi.
	\end{align*}

	Next, we assume that Theorem \ref{th:CGL_commutator} holds for some $m \in \Z_{>0}$.
	Let $\varphi \in L_{m+1}^{1} \left( \R^{n} \right)$, $t>0$, and $\alpha' \in \Z_{\geq 0}^{n}$ with $\abs{\alpha'} =m+1$.
	Then, there exist $\alpha \in \Z_{\geq 0}^{n}$ with $\abs{\alpha} =m$ and $j \in \left\{ 1, \ldots, n \right\}$ such that $\alpha' = \alpha +e_{j}$.
	Since $x^{\alpha} \varphi$, $x_{j} x^{\alpha} \varphi \in L^{1} \left( \R^{n} \right)$, it follows from the case where $m=1$ that $x_{j} e^{t \nu \Delta} x^{\alpha} \varphi \in L^{1} \left( \R^{n} \right)$ and
	\begin{align}
		\label{eq:2.a}
		x_{j} e^{t \nu \Delta} x^{\alpha} \varphi =e^{t \nu \Delta} x_{j} x^{\alpha} \varphi -2t \nu \partial_{j} e^{t \nu \Delta} x^{\alpha} \varphi =e^{t \nu \Delta} x^{\alpha'} \varphi -2t \nu \partial_{j} e^{t \nu \Delta} x^{\alpha} \varphi.
	\end{align}
	Now, we show $x_{j} R_{\alpha} \left( t \right) \varphi \in L^{1} \left( \R^{n} \right)$.
	To this end, it suffices to prove that $x_{j} \partial^{\beta} e^{t \nu \Delta} x^{\gamma} \varphi \in L^{1} \left( \R^{n} \right)$ for any $\beta, \gamma \in \Z_{\geq 0}^{n}$ with $\beta + \gamma \leq \alpha$ and $\abs{\gamma} \leq m-1$.
	Let $\beta, \gamma \in \Z_{\geq 0}^{n}$ satisfy $\beta + \gamma \leq \alpha$ and $\abs{\gamma} \leq m-1$.
	Since $x^{\gamma} \varphi$, $x_{j} x^{\gamma} \varphi \in L^{1} \left( \R^{n} \right)$, Theorem \ref{th:CGL_commutator} with $m=1$ implies $x_{j} e^{t \nu \Delta} x^{\gamma} \varphi \in L^{1} \left( \R^{n} \right)$ and
	\begin{align}
		\label{eq:2.b}
		x_{j} e^{t \nu \Delta} x^{\gamma} \varphi =e^{t \nu \Delta} x_{j} x^{\gamma} \varphi -2t \nu \partial_{j} e^{t \nu \Delta} x^{\gamma} \varphi =e^{t \nu \Delta} x^{\gamma +e_{j}} \varphi -2t \nu \partial_{j} e^{t \nu \Delta} x^{\gamma} \varphi.
	\end{align}
	In addition, it follows from Lemma \ref{lem:CGL_Lp-Lq} that each term on the right hand side of the above identity belongs to $W^{m, 1} \left( \R^{n} \right)$.
	Therefore, we have $\partial^{\beta} \left( x_{j} e^{t \nu \Delta} x^{\gamma} \varphi \right) \in L^{1} \left( \R^{n} \right)$ and
	\begin{align*}
		\partial^{\beta} \left( x_{j} e^{t \nu \Delta} x^{\gamma} \varphi \right)
		&= \partial^{\beta} \left( e^{t \nu \Delta} x^{\gamma +e_{j}} \varphi -2t \nu \partial_{j} e^{t \nu \Delta} x^{\gamma} \varphi \right) \\
		&= \partial^{\beta} e^{t \nu \Delta} x^{\gamma +e_{j}} \varphi -2t \nu \partial^{\beta +e_{j}} e^{t \nu \Delta} x^{\gamma} \varphi.
	\end{align*}
	On the other hand, by a simple calculation, we obtain
	\begin{align}
		\label{eq:2.c}
		\partial^{\beta} \left( x_{j} e^{t \nu \Delta} x^{\gamma} \varphi \right) = \begin{dcases}
			x_{j} \partial^{\beta} e^{t \nu \Delta} x^{\gamma} \varphi + \beta_{j} \partial^{\beta -e_{j}} e^{t \nu \Delta} x^{\gamma} \varphi &\quad \text{if} \quad \beta_{j} \geq 1, \\
			x_{j} \partial^{\beta} e^{t \nu \Delta} x^{\gamma} \varphi &\quad \text{if} \quad \beta_{j} =0,
		\end{dcases}
	\end{align}
	where $\beta_{j}$ is the $j$-th component of $\beta$.
	Since $\partial^{\beta -e_{j}} e^{t \nu \Delta} x^{\gamma} \varphi \in L^{1} \left( \R^{n} \right)$ if $e_{j} \leq \beta$, we deduce that $x_{j} \partial^{\beta} e^{t \nu \Delta} x^{\gamma} \varphi \in L^{1} \left( \R^{n} \right)$ in any case, which in turn implies $x_{j} R_{\alpha} \left( t \right) \varphi \in L^{1} \left( \R^{n} \right)$.
	Taking into account the fact that $x_{j} e^{t \nu \Delta} x^{\alpha} \varphi$, $x_{j} R_{\alpha} \left( t \right) \varphi \in L^{1} \left( \R^{n} \right)$, by the induction hypothesis and \eqref{eq:2.a}, we have $x^{\alpha'} e^{t \nu \Delta} \varphi \in L^{1} \left( \R^{n} \right)$ and
	\begin{align*}
		x^{\alpha'} e^{t \nu \Delta} \varphi
		&=x_{j} x^{\alpha} e^{t \nu \Delta} \varphi \\
		&=x_{j} e^{t \nu \Delta} x^{\alpha} \varphi +x_{j} R_{\alpha} \left( t \right) \varphi \\
		&=e^{t \nu \Delta} x^{\alpha'} \varphi -2t \nu \partial_{j} e^{t \nu \Delta} x^{\alpha} \varphi +x_{j} R_{\alpha} \left( t \right) \varphi,
	\end{align*}
	whence follows
	\begin{align*}
		\bigl[ x^{\alpha'}, e^{t \nu \Delta} \bigr] {\,} \varphi =-2t \nu \partial_{j} e^{t \nu \Delta} x^{\alpha} \varphi +x_{j} R_{\alpha} \left( t \right) \varphi.
	\end{align*}
	In order to complete the proof, we show that the right hand side on the above identity is represented as the form
	\begin{align*}
		R_{\alpha'} \left( t \right) \varphi = \sum_{\substack{\beta' + \gamma' = \alpha' \\ \beta' \neq 0}} \frac{\alpha' !}{\beta' ! \gamma' !} \left( -2t \nu \partial \right)^{\beta'} e^{t \nu \Delta} x^{\gamma'} \varphi + \sum_{\substack{\beta' + \gamma' \leq \alpha', {\ } \abs{\beta' + \gamma'} \leq \abs{\alpha'} -2 \\ \abs{\beta'} +1 \leq \ell' \leq \frac{\lvert \alpha' \rvert + \lvert \beta' \rvert - \lvert \gamma' \rvert}{2}}} C_{\ell' \beta' \gamma'}^{\alpha'} \left( t \nu \right)^{\ell'} \partial^{\beta'} e^{t \nu \Delta} x^{\gamma'} \varphi.
	\end{align*}
	Firstly, it is clear that the term $-2t \nu \partial_{j} e^{t \nu \Delta} x^{\alpha} \varphi$ is a part of the first sum in $R_{\alpha'} \left( t \right) \varphi$ with $\left( \beta', \gamma' \right) = \left( e_{j}, \alpha \right)$.
	Secondly, we take $\beta, \gamma \in \Z_{\geq 0}^{n}$ satisfying $\beta + \gamma = \alpha$ and $\beta \neq 0$.
	If $\beta_{j} \geq 1$, then it follows from \eqref{eq:2.b} and \eqref{eq:2.c} that
	\begin{align}
		\label{eq:2.d}
		&x_{j} \left( -2t \nu \partial \right)^{\beta} e^{t \nu \Delta} x^{\gamma} \varphi \nonumber \\
		&\hspace{1cm} = \left( -2t \nu \partial \right)^{\beta} \left( x_{j} e^{t \nu \Delta} x^{\gamma} \varphi \right) - \beta_{j} \left( -2t \nu \right)^{\abs{\beta}} \partial^{\beta -e_{j}} e^{t \nu \Delta} x^{\gamma} \varphi \nonumber \\
		&\hspace{1cm} = \left( -2t \nu \partial \right)^{\beta} \left( e^{t \nu \Delta} x^{\gamma +e_{j}} \varphi -2t \nu \partial_{j} e^{t \nu \Delta} x^{\gamma} \varphi \right) - \beta_{j} \left( -2 \right)^{\abs{\beta}} \left( t \nu \right)^{\abs{\beta}} \partial^{\beta -e_{j}} e^{t \nu \Delta} x^{\gamma} \varphi \nonumber \\
		&\hspace{1cm} = \left( -2t \nu \partial \right)^{\beta} e^{t \nu \Delta} x^{\gamma +e_{j}} \varphi + \left( -2t \nu \partial \right)^{\beta +e_{j}} e^{t \nu \Delta} x^{\gamma} \varphi - \beta_{j} \left( -2 \right)^{\abs{\beta}} \left( t \nu \right)^{\abs{\beta}} \partial^{\beta -e_{j}} e^{t \nu \Delta} x^{\gamma} \varphi.
	\end{align}
	Similarly, if $\beta_{j} =0$, then we have
	\begin{align}
		\label{eq:2.e}
		x_{j} \left( -2t \nu \partial \right)^{\beta} e^{t \nu \Delta} x^{\gamma} \varphi
		&= \left( -2t \nu \partial \right)^{\beta} \left( x_{j} e^{t \nu \Delta} x^{\gamma} \varphi \right) \nonumber \\
		&= \left( -2t \nu \partial \right)^{\beta} \left( e^{t \nu \Delta} x^{\gamma +e_{j}} \varphi -2t \nu \partial_{j} e^{t \nu \Delta} x^{\gamma} \varphi \right) \nonumber \\
		&= \left( -2t \nu \partial \right)^{\beta} e^{t \nu \Delta} x^{\gamma +e_{j}} \varphi + \left( -2t \nu \partial \right)^{\beta +e_{j}} e^{t \nu \Delta} x^{\gamma} \varphi.
	\end{align}
	The terms $\left( -2t \nu \partial \right)^{\beta} e^{t \nu \Delta} x^{\gamma +e_{j}} \varphi$ and $\left( -2t \nu \partial \right)^{\beta +e_{j}} e^{t \nu \Delta} x^{\gamma} \varphi$ in \eqref{eq:2.d} or \eqref{eq:2.e} are parts of the first sum in $R_{\alpha'} \left( t \right) \varphi$ with $\left( \beta', \gamma' \right) = \left( \beta, \gamma +e_{j} \right)$, $\left( \beta +e_{j}, \gamma \right)$, respectively.
	Moreover, we have
	\begin{align*}
		&-2t \nu \partial_{j} e^{t \nu \Delta} x^{\alpha} \varphi + \sum_{\substack{\beta + \gamma = \alpha \\ \beta \neq 0}} \frac{\alpha !}{\beta ! \gamma !} \left( \left( -2t \nu \partial \right)^{\beta} e^{t \nu \Delta} x^{\gamma +e_{j}} \varphi + \left( -2t \nu \partial \right)^{\beta +e_{j}} e^{t \nu \Delta} x^{\gamma} \varphi \right) \\
		&\hspace{1cm} = \sum_{\substack{\beta + \gamma = \alpha \\ \beta \neq 0}} \frac{\alpha !}{\beta ! \gamma !} \left( -2t \nu \partial \right)^{\beta} e^{t \nu \Delta} x^{\gamma +e_{j}} \varphi + \sum_{\beta + \gamma = \alpha} \frac{\alpha !}{\beta ! \gamma !} \left( -2t \nu \partial \right)^{\beta +e_{j}} e^{t \nu \Delta} x^{\gamma} \varphi \\
		&\hspace{1cm} = \sum_{\substack{\beta' + \gamma' = \alpha' \\ 0 \neq \beta' \leq \alpha}} \frac{\alpha !}{\beta' ! \left( \gamma' -e_{j} \right) !} \left( -2t \nu \partial \right)^{\beta'} e^{t \nu \Delta} x^{\gamma'} \varphi + \sum_{\substack{\beta' + \gamma' = \alpha' \\ \gamma' \leq \alpha}} \frac{\alpha !}{\left( \beta' -e_{j} \right) ! \gamma' !} \left( -2t \nu \partial \right)^{\beta'} e^{t \nu \Delta} x^{\gamma'} \varphi \\
		&\hspace{1cm} = \sum_{\substack{\beta' + \gamma' = \alpha' \\ \beta' \neq 0}} \left( \binom{\alpha}{\beta'} + \binom{\alpha}{\gamma'} \right) \left( -2t \nu \partial \right)^{\beta'} e^{t \nu \Delta} x^{\gamma'} \varphi \\
		&\hspace{1cm} = \sum_{\substack{\beta' + \gamma' = \alpha' \\ \beta' \neq 0}} \frac{\alpha' !}{\beta' ! \gamma' !} \left( -2t \nu \partial \right)^{\beta'} e^{t \nu \Delta} x^{\gamma'} \varphi,
	\end{align*}
	which implies that all components of the first sum in $R_{\alpha'} \left( t \right) \varphi$ have appeared.
	On the other hand, taking $\left( \ell', \beta', \gamma' \right) = \left( \abs{\beta}, \beta -e_{j}, \gamma \right)$ with $e_{j} \leq \beta$, we see that
	\begin{itemize}
		\item
			$\beta ' + \gamma' = \beta -e_{j} + \gamma = \alpha -e_{j} = \alpha' -2e_{j} \leq \alpha'$,
		\item
			$\abs{\beta' + \gamma'} = \abs{\beta -e_{j} + \gamma} = \abs{\alpha} -1= \abs{\alpha'} -2$,
		\item
			$\abs{\beta'} +1= \abs{\beta -e_{j}} +1= \abs{\beta} = \ell'$,
		\item
			$\ell' \leq \dfrac{\abs{\alpha'} + \abs{\beta'} - \abs{\gamma'}}{2} {\ } \Leftrightarrow {\ } \abs{\beta} \leq \dfrac{\left( \abs{\alpha} +1 \right) + \left( \abs{\beta} -1 \right) - \abs{\gamma}}{2} = \dfrac{\abs{\alpha} + \abs{\beta} - \abs{\gamma}}{2} = \abs{\beta}$.
	\end{itemize}
	This means that the term $- \beta_{j} \left( -2 \right)^{\abs{\beta}} \left( t \nu \right)^{\abs{\beta}} \partial^{\beta -e_{j}} e^{t \nu \Delta} x^{\gamma} \varphi$ in \eqref{eq:2.d} is a part of the second sum in $R_{\alpha'} \left( t \right) \varphi$ with $\left( \ell', \beta', \gamma' \right) = \left( \abs{\beta}, \beta -e_{j}, \gamma \right)$.
	Therefore, we conclude that all components of the first sum in $x_{j} R_{\alpha} \left( t \right) \varphi$ are represented as parts of $R_{\alpha'} \left( t \right) \varphi$.
	Finally, we take $\beta, \gamma \in \Z_{\geq 0}^{n}$ and $\ell \in \Z_{\geq 0}$ satisfying
	\begin{align*}
		\beta + \gamma \leq \alpha, \qquad \abs{\beta + \gamma} \leq \abs{\alpha} -2, \qquad \abs{\beta} +1 \leq \ell \leq \frac{\abs{\alpha} + \abs{\beta} - \abs{\gamma}}{2}.
	\end{align*}
	If $\beta_{j} \geq 1$, then it follows from \eqref{eq:2.b} and \eqref{eq:2.c} that
	\begin{align}
		\label{eq:2.f}
		x_{j} \left( t \nu \right)^{\ell} \partial^{\beta} e^{t \nu \Delta} x^{\gamma} \varphi
		&= \left( t \nu \right)^{\ell} \partial^{\beta} \left( x_{j} e^{t \nu \Delta} x^{\gamma} \varphi \right) - \beta_{j} \left( t \nu \right)^{\ell} \partial^{\beta -e_{j}} e^{t \nu \Delta} x^{\gamma} \varphi \nonumber \\
		&= \left( t \nu \right)^{\ell} \partial^{\beta} \left( e^{t \nu \Delta} x^{\gamma +e_{j}} \varphi -2t \nu \partial_{j} e^{t \nu \Delta} x^{\gamma} \varphi \right) - \beta_{j} \left( t \nu \right)^{\ell} \partial^{\beta -e_{j}} e^{t \nu \Delta} x^{\gamma} \varphi \nonumber \\
		&= \left( t \nu \right)^{\ell} \partial^{\beta} e^{t \nu \Delta} x^{\gamma +e_{j}} \varphi -2 \left( t \nu \right)^{\ell +1} \partial^{\beta +e_{j}} e^{t \nu \Delta} x^{\gamma} \varphi - \beta_{j} \left( t \nu \right)^{\ell} \partial^{\beta -e_{j}} e^{t \nu \Delta} x^{\gamma} \varphi.
	\end{align}
	In the same way, if $\beta_{j} =0$, then we have
	\begin{align}
		\label{eq:2.g}
		x_{j} \left( t \nu \right)^{\ell} \partial^{\beta} e^{t \nu \Delta} x^{\gamma} \varphi
		&= \left( t \nu \right)^{\ell} \partial^{\beta} \left( x_{j} e^{t \nu \Delta} x^{\gamma} \varphi \right) \nonumber \\
		&= \left( t \nu \right)^{\ell} \partial^{\beta} \left( e^{t \nu \Delta} x^{\gamma +e_{j}} \varphi -2t \nu \partial_{j} e^{t \nu \Delta} x^{\gamma} \varphi \right) \nonumber \\
		&= \left( t \nu \right)^{\ell} \partial^{\beta} e^{t \nu \Delta} x^{\gamma +e_{j}} \varphi -2 \left( t \nu \right)^{\ell +1} \partial^{\beta +e_{j}} e^{t \nu \Delta} x^{\gamma} \varphi.
	\end{align}
	Taking $\left( \ell', \beta', \gamma' \right) = \left( \ell, \beta, \gamma +e_{j} \right)$, we see that
	\begin{itemize}
		\item
			$\beta ' + \gamma' = \beta + \gamma +e_{j} \leq \alpha +e_{j} = \alpha'$,
		\item
			$\abs{\beta' + \gamma'} = \abs{\beta + \gamma +e_{j}} = \abs{\beta + \gamma} +1 \leq \abs{\alpha} -1= \abs{\alpha'} -2$,
		\item
			$\abs{\beta'} +1= \abs{\beta} +1 \leq \ell = \ell'$,
		\item
			$\ell' \leq \dfrac{\abs{\alpha'} + \abs{\beta'} - \abs{\gamma'}}{2} {\ } \Leftrightarrow {\ } \ell \leq \dfrac{\left( \abs{\alpha} +1 \right) + \abs{\beta} - \left( \abs{\gamma} +1 \right)}{2} = \dfrac{\abs{\alpha} + \abs{\beta} - \abs{\gamma}}{2}$.
	\end{itemize}
	This means that the term $\left( t \nu \right)^{\ell} \partial^{\beta} e^{t \nu \Delta} x^{\gamma +e_{j}} \varphi$ in \eqref{eq:2.f} or \eqref{eq:2.g} is a part of the second sum in $R_{\alpha'} \left( t \right) \varphi$ with $\left( \ell', \beta', \gamma' \right) = \left( \ell, \beta, \gamma +e_{j} \right)$.
	Next, taking $\left( \ell', \beta', \gamma' \right) = \left( \ell +1, \beta +e_{j}, \gamma \right)$, we have
	\begin{itemize}
		\item
			$\beta ' + \gamma' = \beta +e_{j} + \gamma \leq \alpha +e_{j} = \alpha'$,
		\item
			$\abs{\beta' + \gamma'} = \abs{\beta +e_{j} + \gamma} = \abs{\beta + \gamma} +1 \leq \abs{\alpha} -1= \abs{\alpha'} -2$,
		\item
			$\abs{\beta'} +1= \abs{\beta +e_{j}} +1= \left( \abs{\beta} +1 \right) +1 \leq \ell +1= \ell'$,
		\item
			$\ell' \leq \dfrac{\abs{\alpha'} + \abs{\beta'} - \abs{\gamma'}}{2} {\ } \Leftrightarrow {\ } \ell \leq \dfrac{\left( \abs{\alpha} +1 \right) + \left( \abs{\beta} +1 \right) - \abs{\gamma}}{2} -1= \dfrac{\abs{\alpha} + \abs{\beta} - \abs{\gamma}}{2}$.
	\end{itemize}
	Hence, the term $-2 \left( t \nu \right)^{\ell +1} \partial^{\beta +e_{j}} e^{t \nu \Delta} x^{\gamma} \varphi$ in \eqref{eq:2.f} or \eqref{eq:2.g} is a part of the second sum in $R_{\alpha'} \left( t \right) \varphi$ with $\left( \ell', \beta', \gamma' \right) = \left( \ell +1, \beta +e_{j}, \gamma \right)$.
	Finally, taking $\left( \ell', \beta', \gamma' \right) = \left( \ell, \beta -e_{j}, \gamma \right)$ with $e_{j} \leq \beta$, we obtain
	\begin{itemize}
		\item
			$\beta ' + \gamma' = \beta -e_{j} + \gamma \leq \alpha -e_{j} = \alpha' -2e_{j} \leq \alpha'$,
		\item
			$\abs{\beta' + \gamma'} = \abs{\beta -e_{j} + \gamma} = \abs{\beta + \gamma} -1 \leq \abs{\alpha} -3= \abs{\alpha'} -4 \leq \abs{\alpha'} -2$,
		\item
			$\abs{\beta'} +1= \abs{\beta -e_{j}} +1 = \abs{\beta} \leq \ell -1 \leq \ell = \ell'$,
		\item
			$\ell' \leq \dfrac{\abs{\alpha'} + \abs{\beta'} - \abs{\gamma'}}{2} {\ } \Leftrightarrow {\ } \ell \leq \dfrac{\left( \abs{\alpha} +1 \right) + \left( \abs{\beta} -1 \right) - \abs{\gamma}}{2} = \dfrac{\abs{\alpha} + \abs{\beta} - \abs{\gamma}}{2}$.
	\end{itemize}
	This implies that the term $- \beta_{j} \left( t \nu \right)^{\ell} \partial^{\beta -e_{j}} e^{t \nu \Delta} x^{\gamma} \varphi$ in \eqref{eq:2.f} is a part of the second sum in $R_{\alpha'} \left( t \right) \varphi$ with $\left( \ell', \beta', \gamma' \right) = \left( \ell, \beta -e_{j}, \gamma \right)$.
	As a consequence, all components of the second sum in $x_{j} R_{\alpha} \left( t \right) \varphi$ are represented as parts of the second sum in $R_{\alpha'} \left( t \right) \varphi$.
	This completes the proof.
\end{proof}

\begin{rem}
	In the case where $\alpha =e_{j}$ with $j \in \left\{ 1, \ldots, n \right\}$, \eqref{eq:CGL_commutator} is represented as
	\begin{align*}
		\left[ x_{j}, e^{t \nu \Delta} \right] \varphi =-2t \nu \partial_{j} e^{t \nu \Delta} \varphi.
	\end{align*}
	Under suitable assumptions, the above identity is valid even for the case where $\re \nu =0$ and in particular for the Schr\"{o}dinger evolution group $\left( e^{it \Delta}; t \in \R \right)$.
	Such commutation relations are useful for studying smoothing effects of Schr\"{o}dinger evolution groups associated with Hamiltonians involving a potential \cite{Jensen}.
	See also \cite{Ozawa}.
\end{rem}

By using Theorem \ref{th:CGL_commutator}, we can derive the commutator estimates of the CGL semigroup and monomial weights.

\begin{thm} \label{th:CGL_commutator_esti}
	Let $m \in \Z_{>0}$.
	Then, there exists $C>0$ such that the estimate
	\begin{align}
		\label{eq:CGL_commutator_esti}
		\sum_{\abs{\alpha} =m} \norm{\left[ x^{\alpha}, e^{t \nu \Delta} \right] \varphi}_{1} \leq C \left\{ t^{\frac{1}{2}} \norm{\lvert x \rvert^{m-1} {\,} \varphi}_{1} + \left( t^{\frac{1}{2}} +t^{\frac{m}{2}} \right) \norm{\varphi}_{1} \right\}
	\end{align}
	holds for any $\varphi \in L_{m}^{1} \left( \R^{n} \right)$ and $t>0$.
\end{thm}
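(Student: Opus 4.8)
The plan is to take the $L^{1}$ norm of each term in the explicit formula $[x^{\alpha}, e^{t \nu \Delta}] \varphi = R_{\alpha}(t) \varphi$ supplied by Theorem \ref{th:CGL_commutator}, and to estimate each summand by Lemma \ref{lem:CGL_Lp-Lq} with $p=q=1$ (so that $r=1$), which gives $\norm{\partial^{\beta} e^{t \nu \Delta} \psi}_{1} \le t^{-|\beta|/2} \norm{\partial^{\beta} G_{\nu}}_{1} \norm{\psi}_{1}$. Applying this with $\psi = x^{\gamma} \varphi$ and using the pointwise bound $|x^{\gamma}| \le |x|^{|\gamma|}$, every term is controlled by a constant (depending only on $m$, $n$, and $\nu$ through $\norm{\partial^{\beta} G_{\nu}}_{1}$, $|\nu|$, and the coefficients $\frac{\alpha!}{\beta!\gamma!}$, $C^{\alpha}_{\ell\beta\gamma}$, but not on $\varphi$ or $t$) times $t^{\rho} \norm{|x|^{|\gamma|} \varphi}_{1}$ for a suitable exponent $\rho$. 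Thus the proof reduces to matching the exponents $\rho$ against the moment orders $|\gamma|$ and then converting every intermediate moment into the two endpoint moments on the right-hand side of \eqref{eq:CGL_commutator_esti}.

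The exponent bookkeeping goes as follows. For the first sum the factor $(-2t\nu)^{\beta}$ contributes $t^{|\beta|}$ and the smoothing contributes $t^{-|\beta|/2}$, so $\rho = |\beta|/2$, while $|\gamma| = m - |\beta|$; hence $\rho = (m - |\gamma|)/2$ \emph{exactly}. For the second sum the factor $(t\nu)^{\ell}$ contributes $t^{\ell}$, so $\rho = \ell - |\beta|/2$; the constraints $|\beta|+1 \le \ell \le \frac{m+|\beta|-|\gamma|}{2}$ yield $\rho \ge |\beta|/2 + 1 \ge 1$ and $\rho \le (m-|\gamma|)/2$, while $|\beta+\gamma| \le m-2$ forces $|\gamma| \le m-2$.

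The decisive step is to convert $\norm{|x|^{|\gamma|} \varphi}_{1}$ (with $0 \le |\gamma| \le m-1$) into the endpoints $\norm{|x|^{m-1} \varphi}_{1}$ and $\norm{\varphi}_{1}$. For $m \ge 2$ I would invoke log-convexity of moments (Hölder), $\norm{|x|^{k} \varphi}_{1} \le \norm{\varphi}_{1}^{1-\theta} \norm{|x|^{m-1} \varphi}_{1}^{\theta}$ with $\theta = k/(m-1)$, where finiteness of $\norm{|x|^{m-1} \varphi}_{1}$ follows from $\varphi \in L^{1}_{m}$ via $|x|^{m-1} \le C \sum_{|\alpha|=m-1} |x^{\alpha}|$. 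Writing $A = \norm{|x|^{m-1} \varphi}_{1}$, $B = \norm{\varphi}_{1}$, $k = |\gamma|$, the key identity $\frac{1}{2}\theta + \frac{m}{2}(1-\theta) = \frac{m-k}{2}$ shows that in the extreme case $\rho = (m-k)/2$ Young's inequality gives $t^{\rho} B^{1-\theta} A^{\theta} = (t^{1/2}A)^{\theta} (t^{m/2}B)^{1-\theta} \le t^{1/2}A + t^{m/2}B$; this already disposes of the entire first sum. For the second sum, where only $\rho \le (m-k)/2$ and $\rho \ge 1$ hold, I would split according to whether $t \ge 1$ or $t \le 1$: for $t \ge 1$ one has $t^{\rho} \le t^{(m-k)/2}$ and the previous bound applies, while for $t \le 1$ one has $t^{\rho} \le t^{1/2}$ (as $\rho \ge 1$) together with $B^{1-\theta} A^{\theta} \le A + B$, giving $t^{1/2}(A+B)$. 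In every case each term is bounded by $C\{ t^{1/2} \norm{|x|^{m-1} \varphi}_{1} + (t^{1/2} + t^{m/2}) \norm{\varphi}_{1} \}$, and summing the finitely many terms over $|\alpha| = m$ yields \eqref{eq:CGL_commutator_esti}. The case $m=1$ is immediate, since then the second sum is empty and $\norm{|x|^{0} \varphi}_{1} = \norm{\varphi}_{1}$.

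I expect the main obstacle to be precisely this reconciliation of exponents: individual commutator terms carry powers of $t$ as large as $t^{m/2}$ yet are attached to low-order moments, whereas the target keeps the top moment $\norm{|x|^{m-1} \varphi}_{1}$ at the slow rate $t^{1/2}$. Making this rigorous hinges on matching the interpolation weight $\theta$ to the affine relation $\frac{1}{2}\theta + \frac{m}{2}(1-\theta) = \frac{m-|\gamma|}{2}$, so that Young's inequality lands exactly on the two endpoint terms; the remaining subcase $\rho < (m-|\gamma|)/2$ of the second sum is genuinely easier and is dispatched by the elementary dichotomy in the size of $t$ using $\rho \ge 1$.
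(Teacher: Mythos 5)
Your proposal is correct and follows essentially the same route as the paper's proof: expand $\left[ x^{\alpha}, e^{t \nu \Delta} \right] \varphi = R_{\alpha} \left( t \right) \varphi$ via Theorem \ref{th:CGL_commutator}, apply Lemma \ref{lem:CGL_Lp-Lq} termwise, interpolate the intermediate moments $\norm{\abs{x}^{\abs{\gamma}} \varphi}_{1}$ between $\norm{\varphi}_{1}$ and $\norm{\abs{x}^{m-1} \varphi}_{1}$ by H\"{o}lder with $\theta = \abs{\gamma} / \left( m-1 \right)$, and finish with Young's inequality. The only cosmetic difference is in the second sum, where the paper absorbs the leftover power of $t$ by $t^{a} \leq 1+t^{\frac{m-1}{2}}$ instead of your $t \leq 1$ versus $t \geq 1$ dichotomy; the two devices are equivalent.
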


By a simple calculation, we see that there exists $C>0$ such that the estimate
\begin{align}
	\label{eq:CGL_weight}
	\sum_{\abs{\alpha} =m} \norm{x^{\alpha} e^{t \nu \Delta} \varphi}_{1} \leq C \left( \norm{\abs{x}^{m} \varphi}_{1} +t^{\frac{m}{2}} \norm{\varphi}_{1} \right)
\end{align}
holds for any $\varphi \in L_{m}^{1} \left( \R^{n} \right)$ and $t>0$.
Comparing \eqref{eq:CGL_commutator_esti} with \eqref{eq:CGL_weight}, Theorem \ref{th:CGL_commutator_esti} enables us to control the CGL semigroup with the weights of order $m$ by not $\norm{\abs{x}^{m} \varphi}_{1}$ but $\lVert \lvert x \rvert^{m-1} {\,} \varphi \rVert_{1}$.
From this difference, we can understand quantitatively an advantage of monomial weights over monotonic and positive weights to the CGL semigroup.

\begin{proof}[Proof of Theorem \ref{th:CGL_commutator_esti}]
	Let $\varphi \in L_{m}^{1} \left( \R^{n} \right)$ and let $t>0$.
	By Theorem \ref{th:CGL_commutator}, the identity
	\begin{align*}
		\left[ x^{\alpha}, e^{t \nu \Delta} \right] \varphi =R_{\alpha} \left( t \right) \varphi
	\end{align*}
	holds for any $\alpha \in \Z_{\geq 0}^{n}$ with $\abs{\alpha} =m$.
	For the case where $m=1$, Lemma \ref{lem:CGL_Lp-Lq} implies
	\begin{align*}
		\sum_{\abs{\alpha} =1} \norm{\left[ x^{\alpha}, e^{t \nu \Delta} \right] \varphi}_{1}
		&= \sum_{j=1}^{n} \norm{R_{e_{j}} \left( t \right) \varphi}_{1} \\
		&=2 \abs{\nu} t \sum_{j=1}^{n} \norm{\partial_{j} e^{t \nu \Delta} \varphi}_{1} \\
		&\leq \left( 2 \abs{\nu} \sum_{j=1}^{n} \norm{\partial_{j} G_{\nu}}_{1} \right) t^{\frac{1}{2}} \norm{\varphi}_{1}.
	\end{align*}
	Next, we consider the case where $m \geq 2$.
	Here and hereafter, let $C$ denote a positive constant independent of $\varphi$ and $t$ which may change from line to line.
	It follows from Lemma \ref{lem:CGL_Lp-Lq} that
	\begin{align*}
		\norm{R_{\alpha} \left( t \right) \varphi}_{1} &\leq C \sum_{\substack{\beta + \gamma = \alpha \\ \beta \neq 0}} t^{\abs{\beta}} {\,} \lVert \partial^{\beta} e^{t \nu \Delta} x^{\gamma} \varphi \rVert_{1} +C \sum_{\substack{\beta + \gamma \leq \alpha, {\ } \abs{\beta + \gamma} \leq m-2 \\ \abs{\beta} +1 \leq \ell \leq \frac{m+ \abs{\beta} - \abs{\gamma}}{2}}} t^{\ell} {\,} \lVert \partial^{\beta} e^{t \nu \Delta} x^{\gamma} \varphi \rVert_{1} \\
		&\leq C \sum_{\substack{\abs{\beta} + \abs{\gamma} =m \\ \abs{\beta} \geq 1}} t^{\frac{\abs{\beta}}{2}} \norm{x^{\gamma} \varphi}_{1} +C \sum_{\substack{\abs{\beta} + \abs{\gamma} \leq m-2 \\ \abs{\beta} +1 \leq \ell \leq \frac{m+ \abs{\beta} - \abs{\gamma}}{2}}} t^{\ell - \frac{\abs{\beta}}{2}} \norm{x^{\gamma} \varphi}_{1}
	\end{align*}
	for any $\alpha \in \Z_{\geq 0}^{n}$ with $\abs{\alpha} =m$.
	Therefore, we have
	\begin{align}
		\label{eq:2.h}
		\sum_{\abs{\alpha} =m} \norm{\left[ x^{\alpha}, e^{t \nu \Delta} \right] \varphi}_{1} \leq C \sum_{\substack{\abs{\beta} + \abs{\gamma} =m \\ \abs{\beta} \geq 1}} t^{\frac{\abs{\beta}}{2}} \norm{x^{\gamma} \varphi}_{1} +C \sum_{j=0}^{m-2} \sum_{\abs{\beta} + \abs{\gamma} =j} \sum_{\abs{\beta} +1 \leq \ell \leq \frac{m+ \abs{\beta} - \abs{\gamma}}{2}} t^{\ell - \frac{\abs{\beta}}{2}} \norm{x^{\gamma} \varphi}_{1}.
	\end{align}
	Now, we take $\beta, \gamma \in \Z_{\geq 0}^{n}$ satisfying $\abs{\beta} + \abs{\gamma} =m$ and $\abs{\beta} \geq 1$.
	Then, we see that
	\begin{align*}
		\frac{\abs{\beta} -1}{m-1} + \frac{\abs{\gamma}}{m-1} =1, \qquad 0 \leq \frac{\abs{\beta} -1}{m-1} \leq 1, \qquad 0 \leq \frac{\abs{\gamma}}{m-1} \leq 1.
	\end{align*}
	Hence, by H\"{o}lder's inequality, we have
	\begin{align}
		\label{eq:2.i}
		t^{\frac{\abs{\beta}}{2}} \norm{x^{\gamma} \varphi}_{1} &\leq t^{\frac{\abs{\beta}}{2}} {\,} \bigl\lVert \abs{x}^{\abs{\gamma}} \varphi \bigr\rVert_{1} \nonumber \\
		&\leq t^{\frac{\abs{\beta}}{2}} \norm{\varphi}_{1}^{\frac{\abs{\beta} -1}{m-1}} \norm{\lvert x \rvert^{m-1} {\,} \varphi}_{1}^{\frac{\abs{\gamma}}{m-1}} \nonumber \\
		&=t^{\frac{1}{2}} \left( t^{\frac{m-1}{2}} \norm{\varphi}_{1} \right)^{\frac{\abs{\beta} -1}{m-1}} \norm{\lvert x \rvert^{m-1} {\,} \varphi}_{1}^{\frac{\abs{\gamma}}{m-1}} \nonumber \\
		&\leq t^{\frac{1}{2}} \left( t^{\frac{m-1}{2}} \norm{\varphi}_{1} + \norm{\lvert x \rvert^{m-1} {\,} \varphi}_{1} \right) \nonumber \\
		&=t^{\frac{m}{2}} \norm{\varphi}_{1} +t^{\frac{1}{2}} \norm{\lvert x \rvert^{m-1} {\,} \varphi}_{1}.
	\end{align}
	Next, we take $j, \ell \in \Z_{\geq 0}$ and $\beta, \gamma \in \Z_{\geq 0}^{n}$ satisfying
	\begin{align*}
		0 \leq j \leq m-2, \qquad \abs{\beta} + \abs{\gamma} =j, \qquad \abs{\beta} +1 \leq \ell \leq \frac{m+ \abs{\beta} - \abs{\gamma}}{2}.
	\end{align*}
	Then, we see that
	\begin{gather*}
		\frac{m+ \abs{\beta} -j-1}{m-1} + \frac{\abs{\gamma}}{m-1} =1, \qquad 0 \leq \frac{m+ \abs{\beta} -j-1}{m-1} \leq 1, \qquad 0 \leq \frac{\abs{\gamma}}{m-1} \leq 1, \\
		\abs{\beta} +1 \leq 2 \ell - \abs{\beta} -1 \leq m+ \abs{\beta} - \left( \abs{\gamma} + \abs{\beta} \right) -1=m+ \abs{\beta} -j-1.
	\end{gather*}
	In particular, we have
	\begin{align*}
		0 \leq \frac{\abs{\beta} +1}{m+ \abs{\beta} -j-1} \leq \frac{2 \ell - \abs{\beta} -1}{m+ \abs{\beta} -j-1} \leq 1.
	\end{align*}
	Thus, it follows from H\"{o}lder's inequality that
	\begin{align}
		\label{eq:2.j}
		t^{\ell - \frac{\abs{\beta}}{2}} \norm{x^{\gamma} \varphi}_{1} &\leq t^{\ell - \frac{\abs{\beta}}{2}} {\,} \bigl\lVert \abs{x}^{\abs{\gamma}} \varphi \bigr\rVert_{1} \nonumber \\
		&\leq t^{\frac{2 \ell - \abs{\beta}}{2}} \norm{\varphi}_{1}^{\frac{m+ \abs{\beta} -j-1}{m-1}} \norm{\lvert x \rvert^{m-1} {\,} \varphi}_{1}^{\frac{\abs{\gamma}}{m-1}} \nonumber \\
		&=t^{\frac{1}{2}} \left( t^{\frac{\left( m-1 \right) \left( 2 \ell - \abs{\beta} -1 \right)}{2 \left( m+ \abs{\beta} -j-1 \right)}} \norm{\varphi}_{1} \right)^{\frac{m+ \abs{\beta} -j-1}{m-1}} \norm{\lvert x \rvert^{m-1} {\,} \varphi}_{1}^{\frac{\abs{\gamma}}{m-1}} \nonumber \\
		&\leq t^{\frac{1}{2}} \left( t^{\frac{\left( m-1 \right) \left( 2 \ell - \abs{\beta} -1 \right)}{2 \left( m+ \abs{\beta} -j-1 \right)}} \norm{\varphi}_{1} + \norm{\lvert x \rvert^{m-1} {\,} \varphi}_{1} \right) \nonumber \\
		&\leq \left( t^{\frac{1}{2}} + t^{\frac{m}{2}} \right) \norm{\varphi}_{1} +t^{\frac{1}{2}} \norm{\lvert x \rvert^{m-1} {\,} \varphi}_{1}.
	\end{align}
	Here, we have used the inequality
	\begin{align*}
		t^{\frac{\left( m-1 \right) \left( 2 \ell - \abs{\beta} -1 \right)}{2 \left( m+ \abs{\beta} -j-1 \right)}} \leq 1+t^{\frac{m-1}{2}}.
	\end{align*}
	Finally, combining \eqref{eq:2.h}, \eqref{eq:2.i}, and \eqref{eq:2.j} yields the desired estimate.
\end{proof}

\begin{rem} \label{rem:CGL_remainder_esti}
	Under the same assumption as in the Theorem \ref{th:CGL_commutator}, from the proofs of Theorems \ref{th:CGL_commutator} and \ref{th:CGL_commutator_esti}, we see that $x_{j} R_{\alpha} \left( t \right) \varphi$ is represented as a part of $R_{\alpha +e_{j}} \left( t \right) \varphi$ for any $j \in \left\{ 1, \ldots, n \right\}$ and that there exists $C>0$ independent of $\varphi$ such that the estimate
	\begin{align*}
		\sum_{j=1}^{n} \sum_{\abs{\alpha} =m} \norm{x_{j} R_{\alpha} \left( t \right) \varphi}_{1} \leq C \left\{ t^{\frac{1}{2}} \norm{\abs{x}^{m} \varphi}_{1} + \left( t^{\frac{1}{2}} +t^{\frac{m+1}{2}} \right) \norm{\varphi}_{1} \right\}
	\end{align*}
	holds for any $t>0$.
\end{rem}

Next lemma provides commutator estimates between the CGL semigroup and general weights, which will be used in the proof of Theorem \ref{th:P_weight} to compute weighted estimates with approximation.

\begin{lem} \label{lem:CGL_weight_appro}
	Let $w \in W^{2, \infty} \left( \R^{n} \right)$ and let $\varphi \in L^{1} \left( \R^{n} \right)$.
	Then, the estimate
	\begin{align*}
		\norm{\left[ w, e^{t \nu \Delta} \right] \varphi}_{1} &\leq \abs{\nu} \norm{G_{\nu}}_{1} \left( \norm{\Delta w}_{\infty} \norm{G_{\nu}}_{1} t+4 \norm{\nabla w}_{\infty} \norm{\nabla G_{\nu}}_{1} t^{\frac{1}{2}} \right) \norm{\varphi}_{1}
	\end{align*}
	holds for any $t>0$, where $\left[ w, e^{t \nu \Delta} \right] \varphi \coloneqq we^{t \nu \Delta} \varphi -e^{t \nu \Delta} w \varphi$.
\end{lem}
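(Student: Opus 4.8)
The plan is to represent the commutator through a Duhamel-type formula generated by the operator commutator $[w,\Delta]$, and then to estimate the resulting spacetime integral in $L^{1}$ by means of Lemma \ref{lem:CGL_Lp-Lq}. For fixed $t>0$ I would set
\[
	G \left( s \right) \coloneqq e^{\left( t-s \right) \nu \Delta} \left( w {\,} e^{s \nu \Delta} \varphi \right), \qquad s \in \left[ 0, t \right],
\]
and observe that $G \left( t \right) =w {\,} e^{t \nu \Delta} \varphi$ and $G \left( 0 \right) =e^{t \nu \Delta} \left( w \varphi \right)$, so that $\left[ w, e^{t \nu \Delta} \right] \varphi =G \left( t \right) -G \left( 0 \right)$. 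Differentiating in $s$, using $\partial_{s} e^{s \nu \Delta} = \nu \Delta e^{s \nu \Delta}$ and the fact that $\Delta$ commutes with the semigroup, gives
\[
	G' \left( s \right) = \nu {\,} e^{\left( t-s \right) \nu \Delta} \left( w \Delta - \Delta w \right) e^{s \nu \Delta} \varphi = \nu {\,} e^{\left( t-s \right) \nu \Delta} \left[ w, \Delta \right] e^{s \nu \Delta} \varphi,
\]
where $\left[ w, \Delta \right]$ acts as an operator on the function $e^{s \nu \Delta} \varphi$.

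Next I would compute $\left[ w, \Delta \right]$ explicitly. From the Leibniz rule $\Delta \left( wu \right) = \left( \Delta w \right) u+2 \nabla w \cdot \nabla u+w \Delta u$ we get $\left[ w, \Delta \right] u=w \Delta u- \Delta \left( wu \right) =- \left( \Delta w \right) u-2 \nabla w \cdot \nabla u$. Integrating $G'$ over $\left[ 0, t \right]$ therefore yields
\[
	\left[ w, e^{t \nu \Delta} \right] \varphi =- \nu \int_{0}^{t} e^{\left( t-s \right) \nu \Delta} \left( \left( \Delta w \right) e^{s \nu \Delta} \varphi +2 \nabla w \cdot \nabla e^{s \nu \Delta} \varphi \right) ds.
\]
Then I would estimate each factor. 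Applying the $L^{1} \to L^{1}$ bound $\norm{e^{\left( t-s \right) \nu \Delta} g}_{1} \leq \norm{G_{\nu}}_{1} \norm{g}_{1}$ (Lemma \ref{lem:CGL_Lp-Lq} with $\alpha =0$, $p=q=1$) pulls out the outer factor $\norm{G_{\nu}}_{1}$. For the zeroth-order term, $\norm{\left( \Delta w \right) e^{s \nu \Delta} \varphi}_{1} \leq \norm{\Delta w}_{\infty} \norm{e^{s \nu \Delta} \varphi}_{1} \leq \norm{\Delta w}_{\infty} \norm{G_{\nu}}_{1} \norm{\varphi}_{1}$, whose $s$-integral over $\left[ 0, t \right]$ contributes $\norm{\Delta w}_{\infty} \norm{G_{\nu}}_{1} t$. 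For the gradient term, the pointwise Cauchy--Schwarz inequality $\abs{\nabla w \cdot \nabla e^{s \nu \Delta} \varphi} \leq \norm{\nabla w}_{\infty} \abs{\nabla e^{s \nu \Delta} \varphi}$ together with $\nabla e^{s \nu \Delta} \varphi = \left( \nabla G_{s \nu} \right) \ast \varphi$ and the scaling $\norm{\abs{\nabla G_{s \nu}}}_{1} =s^{- \frac{1}{2}} \norm{\nabla G_{\nu}}_{1}$ (equivalently Lemma \ref{lem:CGL_Lp-Lq} with $\abs{\alpha} =1$) gives $\norm{2 \nabla w \cdot \nabla e^{s \nu \Delta} \varphi}_{1} \leq 2 \norm{\nabla w}_{\infty} s^{- \frac{1}{2}} \norm{\nabla G_{\nu}}_{1} \norm{\varphi}_{1}$; since $\int_{0}^{t} s^{- \frac{1}{2}} ds=2t^{\frac{1}{2}}$, this term contributes $4 \norm{\nabla w}_{\infty} \norm{\nabla G_{\nu}}_{1} t^{\frac{1}{2}}$. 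Collecting the common factor $\abs{\nu} \norm{G_{\nu}}_{1}$ reproduces the claimed bound exactly.

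The main obstacle is the rigorous justification of the Duhamel identity, since $\varphi$ is only assumed to lie in $L^{1} \left( \R^{n} \right)$, so that $\nabla e^{s \nu \Delta} \varphi$ and $\Delta e^{s \nu \Delta} \varphi$ are singular as $s \downarrow 0$ and the differentiation of $G \left( s \right)$ cannot be performed naively at the endpoint. I would circumvent this by first proving the identity for $\varphi$ in a dense subclass, say $C_{c}^{\infty} \left( \R^{n} \right)$, where $e^{s \nu \Delta} \varphi$ is smooth and $s \mapsto G \left( s \right)$ is $C^{1}$ on all of $\left[ 0, t \right]$ with values in $L^{1} \left( \R^{n} \right)$, so that the fundamental theorem of calculus applies classically. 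Both sides of the identity are continuous in $\varphi$ with respect to $\norm{\,\cdot\,}_{1}$ --- the left-hand side because multiplication by $w \in L^{\infty} \left( \R^{n} \right)$ and $e^{t \nu \Delta}$ are bounded on $L^{1} \left( \R^{n} \right)$, and the right-hand side by the very estimate derived above, which is linear in $\norm{\varphi}_{1}$ --- so a density argument extends both the identity and the bound to all $\varphi \in L^{1} \left( \R^{n} \right)$. The integrable singularity $s^{- \frac{1}{2}}$ in the gradient term is precisely what keeps the spacetime integral finite for general $L^{1}$ data.
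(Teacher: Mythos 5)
Your proposal is correct and follows essentially the same route as the paper: the identity $\left[ w, e^{t \nu \Delta} \right] \varphi = \int_{0}^{t} \frac{d}{ds} \left( e^{\left( t-s \right) \nu \Delta} w e^{s \nu \Delta} \varphi \right) ds$, the Leibniz computation $\left[ w, \Delta \right] u = - \left( \Delta w \right) u - 2 \nabla w \cdot \nabla u$, and the termwise $L^{1}$ estimates via Lemma \ref{lem:CGL_Lp-Lq} with the integrals $\int_{0}^{t} ds = t$ and $\int_{0}^{t} s^{-1/2} ds = 2 t^{1/2}$ are exactly the paper's steps, yielding the same constants. Your additional density argument justifying the Duhamel identity for general $\varphi \in L^{1} \left( \R^{n} \right)$ is a point of rigor the paper passes over silently, and is a welcome refinement rather than a deviation.
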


\begin{proof}
	We start with the identity
	\begin{align*}
		\left[ w, e^{t \nu \Delta} \right] \varphi
		&= \int_{0}^{t} \frac{d}{ds} \left( e^{\left( t-s \right) \nu \Delta} we^{s \nu \Delta} \varphi \right) ds \\
		&= \int_{0}^{t} e^{\left( t-s \right) \nu \Delta} \left( - \nu \Delta \left( we^{s \nu \Delta} \varphi \right) +w \nu \Delta e^{s \nu \Delta} \varphi \right) ds \\
		&= \nu \int_{0}^{t} e^{\left( t-s \right) \nu \Delta} \left( - \Delta we^{s \nu \Delta} \varphi -2 \nabla w \cdot \nabla e^{s \nu \Delta} \varphi \right) ds.
	\end{align*}
	By taking $L^{1}$-norm of the above identity and applying Lemma \ref{lem:CGL_Lp-Lq}, we have
	\begin{align*}
		\norm{\left[ w, e^{t \nu \Delta} \right] \varphi}_{1} &\leq \abs{\nu} \int_{0}^{t} \norm{e^{\left( t-s \right) \nu \Delta} \left( - \Delta we^{s \nu \Delta} \varphi -2 \nabla w \cdot \nabla e^{s \nu \Delta} \varphi \right)}_{1} ds \\
		&\leq \abs{\nu} \norm{G_{\nu}}_{1} \int_{0}^{t} \norm{- \Delta we^{s \nu \Delta} \varphi -2 \nabla w \cdot \nabla e^{s \nu \Delta} \varphi}_{1} ds \\
		&\leq \abs{\nu} \norm{G_{\nu}}_{1} \left( \norm{\Delta w}_{\infty} \int_{0}^{t} \norm{e^{s \nu \Delta} \varphi}_{1} ds+2 \norm{\nabla w}_{\infty} \int_{0}^{t} \norm{\nabla e^{s \nu \Delta} \varphi}_{1} ds \right) \\
		&\leq \abs{\nu} \norm{G_{\nu}}_{1} \left( \norm{\Delta w}_{\infty} \norm{G_{\nu}}_{1} \norm{\varphi}_{1} \int_{0}^{t} ds+2 \norm{\nabla w}_{\infty} \norm{\nabla G_{\nu}}_{1} \norm{\varphi}_{1} \int_{0}^{t} s^{- \frac{1}{2}} ds \right) \\
		&= \abs{\nu} \norm{G_{\nu}}_{1} \left( \norm{\Delta w}_{\infty} \norm{G_{\nu}}_{1} t+4 \norm{\nabla w}_{\infty} \norm{\nabla G_{\nu}}_{1} t^{\frac{1}{2}} \right) \norm{\varphi}_{1}.
	\end{align*}
\end{proof}

\section{Proof of Theorem \ref{th:P_weight}} \label{sec:proof_weight}

We prove Theorem \ref{th:P_weight} by induction on $m \in \Z_{>0}$.
We first introduce approximate functions of the monomial weights.
For $j \in \left\{ 1, \ldots, n \right\}$ and $\varepsilon \in \left( 0, 1 \right]$, we define a function $w_{j, \varepsilon} \colon \R^{n} \to \R$ by
\begin{align*}
	w_{j, \varepsilon} \left( x \right) \coloneqq x_{j} e^{- \varepsilon \abs{x}^{2}}, \qquad x= \left( x_{1}, \ldots, x_{n} \right) \in \R^{n}.
\end{align*}
Then, we can see that $w_{j, \varepsilon} \in W^{2, \infty} \left( \R^{n} \right)$ and that
\begin{align*}
	\nabla w_{j, \varepsilon} \left( x \right) &=e^{- \varepsilon \abs{x}^{2}} \left( -2 \varepsilon x_{j} x+e_{j} \right), \\
	\Delta w_{j, \varepsilon} \left( x \right) &=e^{- \varepsilon \abs{x}^{2}} \left( 4 \varepsilon^{2} x_{j} \abs{x}^{2} -2 \left( n+2 \right) \varepsilon x_{j} \right),
\end{align*}
whence follows
\begin{align}
	\label{eq:3.a}
	\norm{\nabla w_{j, \varepsilon}}_{\infty} &\leq 2 \sup_{\rho \geq 0} \rho e^{- \rho} +1 \leq 2, \\
	\label{eq:3.b}
	\norm{\Delta w_{j, \varepsilon}}_{\infty} &\leq 4 \varepsilon^{\frac{1}{2}} \sup_{\rho \geq 0} \rho^{\frac{3}{2}} e^{- \rho} +2 \left( n+2 \right) \varepsilon^{\frac{1}{2}} \sup_{\rho \geq 0} \rho^{\frac{1}{2}} e^{- \rho} \leq \left( n+4 \right) \varepsilon^{\frac{1}{2}}.
\end{align}

Now, we consider the case where $m=1$.
Let $u_{0} \in \left( L_{1}^{1} \cap L^{\infty} \right) \left( \R^{n} \right)$, $t>0$, and $\alpha \in \Z_{\geq 0}^{n}$ with $\abs{\alpha} =1$.
Then, there exists $j \in \left\{ 1, \ldots, n \right\}$ such that $\alpha =e_{j}$.
Here and hereafter, different positive constants independent
of $t$ and $\varepsilon$ are denoted by the same letter $C$.
Multiplying \eqref{I} by $w_{j, \varepsilon}$ yields
\begin{align*}
	w_{j, \varepsilon} u \left( t \right)
	&=w_{j, \varepsilon} e^{t \nu \Delta} u_{0} + \int_{0}^{t} w_{j, \varepsilon} e^{\left( t-s \right) \nu \Delta} f \left( u \left( s \right) \right) ds \\
	&=e^{t \nu \Delta} w_{j, \varepsilon} u_{0} + \left[ w_{j, \varepsilon}, e^{t \nu \Delta} \right] u_{0} \\
	&\hspace{1cm} + \int_{0}^{t} e^{\left( t-s \right) \nu \Delta} w_{j, \varepsilon} f \left( u \left( s \right) \right) ds+ \int_{0}^{t} \bigl[ w_{j, \varepsilon}, e^{\left( t-s \right) \nu \Delta} \bigr] {\,} f \left( u \left( s \right) \right) ds.
\end{align*}
From \eqref{eq:P_decay2}, \eqref{eq:3.a}, \eqref{eq:3.b}, and Lemma \ref{lem:CGL_weight_appro}, we obtain
\begin{align*}
	\norm{w_{j, \varepsilon} u \left( t \right)}_{1} &\leq \norm{e^{t \nu \Delta} w_{j, \varepsilon} u_{0}}_{1} + \norm{\left[ w_{j, \varepsilon}, e^{t \nu \Delta} \right] u_{0}}_{1} \\
	&\hspace{1cm} + \int_{0}^{t} \norm{e^{\left( t-s \right) \nu \Delta} w_{j, \varepsilon} f \left( u \left( s \right) \right)}_{1} ds \\
	&\hspace{1cm} + \int_{0}^{t} \norm{\bigl[ w_{j, \varepsilon}, e^{\left( t-s \right) \nu \Delta} \bigr] {\,} f \left( u \left( s \right) \right)}_{1} ds \\
	&\leq C \norm{w_{j, \varepsilon} u_{0}}_{1} +C \left( \norm{\Delta w_{j, \varepsilon}}_{\infty} t+ \norm{\nabla w_{j, \varepsilon}}_{\infty} t^{\frac{1}{2}} \right) \norm{u_{0}}_{1} \\
	&\hspace{1cm} +C \int_{0}^{t} \norm{u \left( s \right)}_{\infty}^{p-1} \norm{w_{j, \varepsilon} u \left( s \right)}_{1} ds \\
	&\hspace{1cm} +C \int_{0}^{t} \left( \norm{\Delta w_{j, \varepsilon}}_{\infty} \left( t-s \right) + \norm{\nabla w_{j, \varepsilon}}_{\infty} \left( t-s \right)^{\frac{1}{2}} \right) \norm{u \left( s \right)}_{p}^{p} ds \\
	&\leq C \norm{x_{j} u_{0}}_{1} +C \left( \varepsilon^{\frac{1}{2}} t+t^{\frac{1}{2}} \right) \norm{u_{0}}_{1} \\
	&\hspace{1cm} +C \int_{0}^{t} \left( 1+s \right)^{- \frac{n}{2} \left( p-1 \right)} \norm{w_{j, \varepsilon} u \left( s \right)}_{1} ds \\
	&\hspace{1cm} +C \int_{0}^{t} \left( \varepsilon^{\frac{1}{2}} \left( t-s \right) + \left( t-s \right)^{\frac{1}{2}} \right) \left( 1+s \right)^{- \frac{n}{2} \left( p-1 \right)} ds \\
	&\leq \xi_{\varepsilon} \left( t \right) + \int_{0}^{t} \eta \left( s \right) \norm{w_{j, \varepsilon} u \left( s \right)}_{1} ds,
\end{align*}
where
\begin{align*}
	\xi_{\varepsilon} \left( t \right) &\coloneqq C \left( 1+ \varepsilon^{\frac{1}{2}} t+t^{\frac{1}{2}} \right) +C \int_{0}^{t} \left( \varepsilon^{\frac{1}{2}} \left( t-s \right) + \left( t-s \right)^{\frac{1}{2}} \right) \left( 1+s \right)^{- \frac{n}{2} \left( p-1 \right)} ds, \\
	\eta \left( t \right) &\coloneqq C \left( 1+t \right)^{- \frac{n}{2} \left( p-1 \right)}.
\end{align*}
In addition, since $p>1+2/n$, we have
\begin{align*}
	\xi_{\varepsilon} \left( t \right) &\leq C \left( 1+ \varepsilon^{\frac{1}{2}} t+t^{\frac{1}{2}} \right) +C \left( \varepsilon^{\frac{1}{2}} t+t^{\frac{1}{2}} \right) \int_{0}^{t} \left( 1+s \right)^{- \frac{n}{2} \left( p-1 \right)} ds \\
	&\leq C \left( 1+ \varepsilon^{\frac{1}{2}} t+t^{\frac{1}{2}} \right).
\end{align*}
Therefore, it follows from the Gr\"{o}nwall lemma that
\begin{align*}
	\norm{w_{j, \varepsilon} u \left( t \right)}_{1} &\leq \xi_{\varepsilon} \left( t \right) + \int_{0}^{t} \xi_{\varepsilon} \left( s \right) \eta \left( s \right) \exp \left( \int_{s}^{t} \eta \left( \tau \right) d \tau \right) ds,
\end{align*}
and the right hand side on the above inequality is estimated as
\begin{align*}
	&\xi_{\varepsilon} \left( t \right) + \int_{0}^{t} \xi_{\varepsilon} \left( s \right) \eta \left( s \right) \exp \left( \int_{s}^{t} \eta \left( \tau \right) d \tau \right) ds \\
	&\hspace{1cm} \leq C \left( 1+ \varepsilon^{\frac{1}{2}} t+t^{\frac{1}{2}} \right) \left( 1+ \exp \left( C \int_{0}^{+ \infty} \left( 1+ \tau \right)^{- \frac{n}{2} \left( p-1 \right)} d \tau \right) \int_{0}^{t} \left( 1+s \right)^{- \frac{n}{2} \left( p-1 \right)} ds \right) \\
	&\hspace{1cm} \leq C \left( 1+ \varepsilon^{\frac{1}{2}} t+t^{\frac{1}{2}} \right).
\end{align*}
By taking $\varepsilon \searrow 0$ and applying Fatou's lemma, we obtain $x_{j} u \left( t \right) \in L^{1} \left( \R^{n} \right)$ and
\begin{align*}
	\norm{x_{j} u \left( t \right)}_{1} &\leq C \left( 1+t^{\frac{1}{2}} \right).
\end{align*}
This implies $x_{j} f \left( u \left( t \right) \right) \in L^{1} \left( \R^{n} \right)$ in turn.
In addition, by \eqref{I} and Theorem \ref{th:CGL_commutator}, we have
\begin{align*}
	x_{j} u \left( t \right)
	&=x_{j} e^{t \nu \Delta} u_{0} + \int_{0}^{t} x_{j} e^{\left( t-s \right) \nu \Delta} f \left( u \left( s \right) \right) ds \\
	&=e^{t \nu \Delta} x_{j} u_{0} -2t \nu \partial_{j} e^{t \nu \Delta} u_{0} + \int_{0}^{t} e^{\left( t-s \right) \nu \Delta} x_{j} f \left( u \left( s \right) \right) ds-2 \nu \int_{0}^{t} \left( t-s \right) \partial_{j} e^{\left( t-s \right) \nu \Delta} f \left( u \left( s \right) \right) ds,
\end{align*}
whence follows $x_{j} u \in C \left( \left[ 0, + \infty \right); L^{1} \left( \R^{n} \right) \right)$.
This completes the proof of Theorem \ref{th:P_weight} with $m=1$.

Next, we assume that Theorem \ref{th:P_weight} holds for some $m \in \Z_{>0}$.
Let $u_{0} \in \left( L_{m+1}^{1} \cap L^{\infty} \right) \left( \R^{n} \right)$, $t>0$, and $\alpha' \in \Z_{\geq 0}^{n}$ with $\abs{\alpha'} =m+1$.
Then, there exist $\alpha \in \Z_{\geq 0}^{n}$ with $\abs{\alpha} =m$ and $j \in \left\{ 1, \ldots, n \right\}$ such that $\alpha' = \alpha +e_{j}$.
Multiplying \eqref{I} by $w_{j, \varepsilon} x^{\alpha}$ and applying Theorem \ref{th:CGL_commutator}, we obtain
\begin{align*}
	w_{j, \varepsilon} x^{\alpha} u \left( t \right)
	&=w_{j, \varepsilon} x^{\alpha} e^{t \nu \Delta} u_{0} + \int_{0}^{t} w_{j, \varepsilon} x^{\alpha} e^{\left( t-s \right) \nu \Delta} f \left( u \left( s \right) \right) ds \\
	&=e^{t \nu \Delta} w_{j, \varepsilon} x^{\alpha} u_{0} + \left[ w_{j, \varepsilon}, e^{t \nu \Delta} \right] x^{\alpha} u_{0} + w_{j, \varepsilon} R_{\alpha} \left( t \right) u_{0} + \int_{0}^{t} e^{\left( t-s \right) \nu \Delta} w_{j, \varepsilon} x^{\alpha} f \left( u \left( s \right) \right) ds \\
	&\hspace{1cm} + \int_{0}^{t} \bigl[ w_{j, \varepsilon}, e^{\left( t-s \right) \nu \Delta} \bigr] {\,} x ^{\alpha} f \left( u \left( s \right) \right) ds+ \int_{0}^{t} w_{j, \varepsilon} R_{\alpha} \left( t-s \right) f \left( u \left( s \right) \right) ds.
\end{align*}
It follows from the induction hypothesis and Remark \ref{rem:CGL_remainder_esti} that
\begin{align}
	\label{eq:3.c}
	\norm{w_{j, \varepsilon} R_{\alpha} \left( t \right) u_{0}}_{1} &\leq \norm{x_{j} R_{\alpha} \left( t \right) u_{0}}_{1} \nonumber \\
	&\leq C \left\{ t^{\frac{1}{2}} \norm{\abs{x}^{m} u_{0}}_{1} + \left( t^{\frac{1}{2}} +t^{\frac{m+1}{2}} \right) \norm{u_{0}}_{1} \right\}, \\
	\label{eq:3.d}
	\norm{w_{j, \varepsilon} R_{\alpha} \left( t-s \right) f \left( u \left( s \right) \right)}_{1} &\leq \norm{x_{j} R_{\alpha} \left( t-s \right) f \left( u \left( s \right) \right)}_{1} \nonumber \\
	&\leq C \left\{ \left( t-s \right)^{\frac{1}{2}} \norm{\abs{x}^{m} f \left( u \left( s \right) \right)}_{1} + \left( \left( t-s \right)^{\frac{1}{2}} + \left( t-s \right)^{\frac{m+1}{2}} \right) \norm{f \left( u \left( s \right) \right)}_{1} \right\}
\end{align}
for any $s \in \left( 0, t \right)$.
Thus, by a calculation similar to that in the case where $m=1$ with \eqref{eq:P_decay2}, \eqref{eq:3.a}, \eqref{eq:3.b}, \eqref{eq:3.c}, \eqref{eq:3.d}, and Lemma \ref{lem:CGL_weight_appro}, we can derive
\begin{align*}
	\norm{w_{j, \varepsilon} x^{\alpha} u \left( t \right)}_{1} \leq \widetilde{\xi}_{\varepsilon} \left( t \right) + \int_{0}^{t} \eta \left( s \right) \norm{w_{j, \varepsilon} x^{\alpha} u \left( s \right)}_{1} ds,
\end{align*}
where
\begin{align*}
	\widetilde{\xi}_{\varepsilon} \left( t \right) &\coloneqq C \left( 1+t^{\frac{1}{2}} + \varepsilon^{\frac{1}{2}} t+t^{\frac{m+1}{2}} \right) +C \int_{0}^{t} \left( \varepsilon^{\frac{1}{2}} \left( t-s \right) + \left( t-s \right)^{\frac{1}{2}} \right) \left( 1+s \right)^{- \frac{n}{2} \left( p-1 \right)} \left( 1+s^{\frac{m}{2}} \right) ds \\
	&\hspace{1cm} +C \int_{0}^{t} \left( \left( t-s \right)^{\frac{1}{2}} + \left( t-s \right)^{\frac{m+1}{2}} \right) \left( 1+s \right)^{- \frac{n}{2} \left( p-1 \right)} ds.
\end{align*}
Taking into account the assumption that $p>1+2/n$, we have
\begin{align*}
	\widetilde{\xi}_{\varepsilon} \left( t \right) &\leq C \left( 1+t^{\frac{1}{2}} + \varepsilon^{\frac{1}{2}} t+t^{\frac{m+1}{2}} \right) +C \left( \varepsilon^{\frac{1}{2}} t+t^{\frac{1}{2}} \right) \left( 1+t^{\frac{m}{2}} \right) \int_{0}^{t} \left( 1+s \right)^{- \frac{n}{2} \left( p-1 \right)} ds \\
	&\hspace{1cm} +C \left( t^{\frac{1}{2}} +t^{\frac{m+1}{2}} \right) \int_{0}^{t} \left( 1+s \right)^{- \frac{n}{2} \left( p-1 \right)} ds \\
	&\leq C \left( 1+t^{\frac{1}{2}} + \varepsilon^{\frac{1}{2}} t+t^{\frac{m+1}{2}} + \varepsilon^{\frac{1}{2}} t^{\frac{m}{2} +1} \right).
\end{align*}
Hence, it follows from the Gr\"{o}nwall lemma that
\begin{align*}
	\norm{w_{j, \varepsilon} x^{\alpha} u \left( t \right)}_{1} &\leq \widetilde{\xi}_{\varepsilon} \left( t \right) + \int_{0}^{t} \widetilde{\xi}_{\varepsilon} \left( s \right) \eta \left( s \right) \exp \left( \int_{s}^{t} \eta \left( \tau \right) d \tau \right) ds,
\end{align*}
and the right hand side on the above inequality is estimated as
\begin{align*}
	&\widetilde{\xi}_{\varepsilon} \left( t \right) + \int_{0}^{t} \widetilde{\xi}_{\varepsilon} \left( s \right) \eta \left( s \right) \exp \left( \int_{s}^{t} \eta \left( \tau \right) d \tau \right) ds \\
	&\hspace{1cm} \leq C \left( 1+t^{\frac{1}{2}} + \varepsilon^{\frac{1}{2}} t+t^{\frac{m+1}{2}} + \varepsilon^{\frac{1}{2}} t^{\frac{m}{2} +1} \right) \\
	&\hspace{3cm} \times \left( 1+ \exp \left( C \int_{0}^{+ \infty} \left( 1+ \tau \right)^{- \frac{n}{2} \left( p-1 \right)} d \tau \right) \int_{0}^{t} \left( 1+s \right)^{- \frac{n}{2} \left( p-1 \right)} ds \right) \\
	&\hspace{1cm} \leq C \left( 1+t^{\frac{1}{2}} + \varepsilon^{\frac{1}{2}} t+t^{\frac{m+1}{2}} + \varepsilon^{\frac{1}{2}} t^{\frac{m}{2} +1} \right).
\end{align*}
By taking $\varepsilon \searrow 0$ and applying Fatou's lemma, we obtain $x^{\alpha'} u \left( t \right) =x_{j} x^{\alpha} u \left( t \right) \in L^{1} \left( \R^{n} \right)$ and
\begin{align*}
	\lVert x^{\alpha'} u \left( t \right) \rVert_{1} &\leq C \left( 1+t^{\frac{1}{2}} +t^{\frac{m+1}{2}} \right) \leq C \left( 1+t^{\frac{m+1}{2}} \right).
\end{align*}
This implies $x^{\alpha'} f \left( u \left( t \right) \right) \in L^{1} \left( \R^{n} \right)$ in turn.
Moreover, by \eqref{I} and Theorem \ref{th:CGL_commutator}, we obtain
\begin{align*}
	x^{\alpha'} u \left( t \right)
	&=x^{\alpha'} e^{t \nu \Delta} u_{0} + \int_{0}^{t} x^{\alpha'} e^{\left( t-s \right) \nu \Delta} f \left( u \left( s \right) \right) ds \\
	&=e^{t \nu \Delta} x^{\alpha'} u_{0} +R_{\alpha'} \left( t \right) u_{0} + \int_{0}^{t} e^{\left( t-s \right) \nu \Delta} x^{\alpha'} f \left( u \left( s \right) \right) ds+ \int_{0}^{t} R_{\alpha'} \left( t-s \right) f \left( u \left( s \right) \right) ds,
\end{align*}
whence follows $x^{\alpha'} u \in C \left( \left[ 0, + \infty \right); L^{1} \left( \R^{n} \right) \right)$.
This completes the induction argument.
\qed

\section{Proof of Theorem \ref{th:P_asymptotics_lim}} \label{sec:proof_asymptotics_lim}

We first approximate the large time behavior of the global solution to \eqref{P} by that of a linear combination of the CGL semigroup and its derivatives.

\begin{lem} \label{lem:P_appro}
	Let $N \in \Z_{\geq 0}$ and let $p>1+2 \left( N+1 \right) /n$.
	Let $u_{0} \in \left( L^{1} \cap L^{\infty} \right) \left( \R^{n} \right)$ and let $u \in X$ be a global solution to \eqref{P} satisfying \eqref{eq:P_decay}.
	Then, for any $q \in \left[ 1, + \infty \right]$, there exists $C>0$ such that the estimates
	\begin{align*}
		t^{\frac{n}{2} \left( 1- \frac{1}{q} \right)} \norm{u \left( t \right) -e^{t \nu \Delta} u_{0} - \sum_{k=0}^{N} \frac{1}{k!} \left( - \nu \Delta \right)^{k} e^{t \nu \Delta} \psi_{k}}_{q} &\leq \begin{cases}
			Ct^{- \sigma} &\text{if} \quad N< \sigma <N+1, \\
			Ct^{- \left( N+1 \right)} \log \left( 2+t \right) &\text{if} \quad \sigma =N+1, \\
			Ct^{- \left( N+1 \right)} &\text{if} \quad \sigma >N+1
		\end{cases}
	\end{align*}
	hold for all $t>1$, where
	\begin{align*}
		\sigma &\coloneqq \frac{n}{2} \left( p-1 \right) -1>N, \\
		\psi_{k} &\coloneqq \int_{0}^{+ \infty} s^{k} f \left( u \left( s \right) \right) ds.
	\end{align*}
\end{lem}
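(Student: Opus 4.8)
The plan is to start from the integral equation \eqref{I}, which gives $u(t)-e^{t\nu\Delta}u_0=\int_0^t e^{(t-s)\nu\Delta}f(u(s))\,ds$, and to expand $e^{(t-s)\nu\Delta}=e^{t\nu\Delta}e^{-s\nu\Delta}$ in powers of $s$. Taking $\eta=0$ in the assumed Lipschitz bound gives $\abs{f(\xi)}\le K\abs{\xi}^{p}$, so \eqref{eq:P_decay2} yields $\norm{f(u(s))}_{r}\le C(1+s)^{-\frac{n}{2}(p-\frac{1}{r})}$ for every $r\in[1,+\infty]$, and in particular $\norm{f(u(s))}_{1}\le C(1+s)^{-(\sigma+1)}$ with $\sigma+1=\frac{n}{2}(p-1)$. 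Since $\int_0^{+\infty}s^{k}(1+s)^{-(\sigma+1)}\,ds<+\infty$ whenever $k<\sigma$, and $0\le k\le N<\sigma$, the moments $\psi_{k}$ are well-defined $L^{1}$-valued Bochner integrals. The first step is the time-Taylor formula, valid for fixed $\phi\in L^{1}(\R^{n})$ and $0<s<t$,
\[
e^{(t-s)\nu\Delta}\phi=\sum_{k=0}^{N}\frac{s^{k}}{k!}(-\nu\Delta)^{k}e^{t\nu\Delta}\phi+\frac{1}{N!}\int_0^{s}(s-\tau)^{N}(-\nu\Delta)^{N+1}e^{(t-\tau)\nu\Delta}\phi\,d\tau,
\]
which follows from Taylor's theorem applied to the $L^{q}$-valued map $\tau\mapsto e^{(t-\tau)\nu\Delta}\phi$ on $[0,s]$; this map is smooth for $\tau<t$ with $\partial_\tau^{k}=(-\nu\Delta)^{k}e^{(t-\tau)\nu\Delta}$, each derivative being controlled away from $\tau=t$ by Lemma \ref{lem:CGL_Lp-Lq}.

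Next I would decompose the target. Writing $\psi_{k}=\int_0^{t}s^{k}f(u(s))\,ds+\int_t^{+\infty}s^{k}f(u(s))\,ds$ and inserting the Taylor formula with $\phi=f(u(s))$ into the Duhamel term gives
\[
u(t)-e^{t\nu\Delta}u_0-\sum_{k=0}^{N}\frac{1}{k!}(-\nu\Delta)^{k}e^{t\nu\Delta}\psi_{k}=\int_0^{t}\mathcal R(s)\,ds-\sum_{k=0}^{N}\frac{1}{k!}(-\nu\Delta)^{k}e^{t\nu\Delta}\int_t^{+\infty}s^{k}f(u(s))\,ds,
\]
where $\mathcal R(s)$ is the Taylor remainder above. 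The tail sum is harmless: Lemma \ref{lem:CGL_Lp-Lq} bounds $(-\nu\Delta)^{k}e^{t\nu\Delta}\colon L^{1}\to L^{q}$ by $Ct^{-\frac{n}{2}(1-\frac{1}{q})-k}$, while $\int_t^{+\infty}s^{k}(1+s)^{-(\sigma+1)}\,ds\le Ct^{k-\sigma}$ (using $k<\sigma$), so $t^{\frac{n}{2}(1-\frac{1}{q})}$ times its $L^{q}$-norm is $\le Ct^{-\sigma}$.

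The heart of the matter is $\int_0^{t}\mathcal R(s)\,ds$, which I would split at $s=t/2$. On $(0,t/2)$ I use the integral form of $\mathcal R(s)$: there $\tau\le s\le t/2$, so $t-\tau\ge t/2$ and Lemma \ref{lem:CGL_Lp-Lq} gives $\norm{(-\nu\Delta)^{N+1}e^{(t-\tau)\nu\Delta}f(u(s))}_{q}\le Ct^{-\frac{n}{2}(1-\frac{1}{q})-(N+1)}\norm{f(u(s))}_{1}$; carrying out $\int_0^{s}(s-\tau)^{N}\,d\tau=s^{N+1}/(N+1)$ and integrating in $s$ bounds this part by $Ct^{-\frac{n}{2}(1-\frac{1}{q})-(N+1)}\int_0^{t/2}s^{N+1}(1+s)^{-(\sigma+1)}\,ds$. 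The last integral is $O(1)$, $O(\log(2+t))$, or $O(t^{N+1-\sigma})$ according to whether $\sigma>N+1$, $\sigma=N+1$, or $N<\sigma<N+1$, which is exactly the source of the three cases. On $(t/2,t)$ I would not use the integral remainder; instead I estimate the two constituents of $\mathcal R(s)$ directly, the Duhamel piece by $\norm{e^{(t-s)\nu\Delta}f(u(s))}_{q}\le\norm{G_\nu}_{1}\norm{f(u(s))}_{q}\le Ct^{-(\sigma+1)-\frac{n}{2}(1-\frac{1}{q})}$ on this range, and each polynomial piece by Lemma \ref{lem:CGL_Lp-Lq}; both integrate to $\le Ct^{-\sigma-\frac{n}{2}(1-\frac{1}{q})}$. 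Since $t^{-\sigma}\le t^{-(N+1)}$ for $t>1$ when $\sigma\ge N+1$, these $(t/2,t)$ and tail contributions are dominated by, or coincide with when $N<\sigma<N+1$, the asserted bound.

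The main obstacle is precisely the near-diagonal region $s\approx t$: a naive use of the integral-form remainder over all of $(0,t)$ produces the non-integrable kernel $(t-\tau)^{-\frac{n}{2}(1-\frac{1}{q})-(N+1)}$ for large $q$. Splitting at $t/2$ and treating the near-diagonal part through the $L^{q}$-bounded (rather than smoothing) action of $e^{(t-s)\nu\Delta}$ on $f(u(s))\in L^{q}$—which is where the faster decay $\norm{f(u(s))}_{q}\sim t^{-(\sigma+1)-\frac{n}{2}(1-\frac{1}{q})}$ enters—resolves it. The remaining care is routine: justifying the vector-valued Taylor formula, interchanging the order of integration (Fubini/Minkowski), and checking the elementary time integrals that generate the trichotomy.
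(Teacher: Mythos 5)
Your proposal is correct and follows essentially the same route as the paper: a time-variable Taylor expansion of $e^{(t-s)\nu\Delta}$ about $s=0$, a splitting of the Duhamel integral at $s=t/2$ (smoothing estimate of Lemma \ref{lem:CGL_Lp-Lq} on $(0,t/2)$, direct $L^{q}$ estimates of the constituent terms near the diagonal), and the same elementary integral $\int_{0}^{t/2}(1+s)^{N-\sigma}\,ds$ producing the trichotomy. The only differences are cosmetic bookkeeping (your remainder is parametrized by $\tau=s\theta$, and you cut the moment tail at $t$ rather than at $t/2$), so the two proofs coincide in substance.
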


\begin{rem}
	We do not need the weighted $L^{1}$-spaces in Lemma \ref{lem:P_appro}.
	In addition, we do not need to suppose the regularity of the initial data by virtue of the smoothing effect of the CGL semigroup.
\end{rem}

\begin{proof}[Proof of Lemma \ref{lem:P_appro}]
	For any $k \in \left\{ 0, \ldots, N \right\}$, it follows from \eqref{eq:P_decay2} that
	\begin{align*}
		\norm{\psi_{k}}_{1} + \norm{\psi_{k}}_{\infty}
		&\leq C \int_{0}^{+ \infty} s^{k} \norm{u \left( s \right)}_{\infty}^{p-1} \left( \norm{u \left( s \right)}_{1} + \norm{u \left( s \right)}_{\infty} \right) ds \\
		&\leq C \int_{0}^{+ \infty} \left( 1+s \right)^{k- \frac{n}{2} \left( p-1 \right)} ds \\
		&\leq C,
	\end{align*}
	which implies $\psi_{k} \in \left( L^{1} \cap L^{\infty} \right) \left( \R^{n} \right)$.
	Let $q \in \left[ 1, + \infty \right]$ and let $t>1$.
	We note that the identity
	\begin{align*}
		e^{\left( t-s \right) \nu \Delta} f \left( u \left( s \right) \right)
		&= \sum_{k=0}^{N} \frac{1}{k!} \left( -s \nu \Delta \right)^{k} e^{t \nu \Delta} f \left( u \left( s \right) \right) \\
		&\hspace{1cm} + \frac{1}{N!} \int_{0}^{1} \left( 1- \theta \right)^{N} \left( -s \nu \Delta \right)^{N+1} e^{\left( t-s \theta \right) \nu \Delta} f \left( u \left( s \right) \right) d \theta
	\end{align*}
	holds for any $s \in \left( 0, t \right)$ by virtue of Taylor's theorem with respect to the time variable.
	Using \eqref{I} and the above identity, we decompose the remainder into three parts:
	\begin{align*}
		&u \left( t \right) -e^{t \nu \Delta} u_{0} - \sum_{k=0}^{N} \frac{1}{k!} \left( - \nu \Delta \right)^{k} e^{t \nu \Delta} \psi_{k} \\
		&\hspace{1cm} = \int_{0}^{t/2} \left( e^{\left( t-s \right) \nu \Delta} - \sum_{k=0}^{N} \frac{1}{k!} \left( -s \nu \Delta \right)^{k} e^{t \nu \Delta} \right) f \left( u \left( s \right) \right) ds+ \int_{t/2}^{t} e^{\left( t-s \right) \nu \Delta} f \left( u \left( s \right) \right) ds \\
		&\hspace{2cm} - \sum_{k=0}^{N} \frac{1}{k!} \left( - \nu \Delta \right)^{k} e^{t \nu \Delta} \int_{t/2}^{+ \infty} s^{k} f \left( u \left( s \right) \right) ds \\
		&\hspace{1cm} = \frac{1}{N!} \int_{0}^{t/2} \int_{0}^{1} \left( 1- \theta \right)^{N} \left( -s \nu \Delta \right)^{N+1} e^{\left( t-s \theta \right) \nu \Delta} f \left( u \left( s \right) \right) d \theta ds+ \int_{t/2}^{t} e^{\left( t-s \right) \nu \Delta} f \left( u \left( s \right) \right) ds \\
		&\hspace{2cm} - \sum_{k=0}^{N} \frac{1}{k!} \left( - \nu \Delta \right)^{k} e^{t \nu \Delta} \int_{t/2}^{+ \infty} s^{k} f \left( u \left( s \right) \right) ds.
	\end{align*}
	By \eqref{eq:P_decay} and Lemma \ref{lem:CGL_Lp-Lq}, we have
	\begin{align*}
		\norm{\int_{t/2}^{t} e^{\left( t-s \right) \nu \Delta} f \left( u \left( s \right) \right) ds}_{q}
		&\leq \int_{t/2}^{t} \norm{e^{\left( t-s \right) \nu \Delta} f \left( u \left( s \right) \right)}_{q} ds \\
		&\leq C \int_{t/2}^{t} \norm{u \left( s \right)}_{pq}^{p} ds \\
		&\leq C \int_{t/2}^{t} s^{- \frac{n}{2} \left( 1- \frac{1}{pq} \right) p} ds \\
		&\leq Ct^{- \frac{n}{2} \left( 1- \frac{1}{q} \right) - \sigma}, \\
		\norm{\sum_{k=0}^{N} \frac{1}{k!} \left( - \nu \Delta \right)^{k} e^{t \nu \Delta} \int_{t/2}^{+ \infty} s^{k} f \left( u \left( s \right) \right) ds}_{q}
		&\leq C \sum_{k=0}^{N} t^{- \frac{n}{2} \left( 1- \frac{1}{q} \right) -k} \int_{t/2}^{+ \infty} s^{k} \norm{u \left( s \right)}_{p}^{p} ds \\
		&\leq C \sum_{k=0}^{N} t^{- \frac{n}{2} \left( 1- \frac{1}{q} \right) -k} \int_{t/2}^{+ \infty} s^{k- \frac{n}{2} \left( p-1 \right)} ds \\
		&\leq Ct^{- \frac{n}{2} \left( 1- \frac{1}{q} \right) - \sigma}.
	\end{align*}
	Here, we have used the identity
	\begin{align*}
		\frac{n}{2} \left( 1- \frac{1}{pq} \right) p-1= \frac{n}{2} \left( 1- \frac{1}{q} \right) + \sigma.
	\end{align*}
	In the same way, we obtain
	\begin{align*}
		&\norm{\frac{1}{N!} \int_{0}^{t/2} \int_{0}^{1} \left( 1- \theta \right)^{N} \left( -s \nu \Delta \right)^{N+1} e^{\left( t-s \theta \right) \nu \Delta} f \left( u \left( s \right) \right) d \theta ds}_{q} \\
		&\hspace{1cm} \leq C \int_{0}^{t/2} \int_{0}^{1} s^{N+1} \norm{\Delta^{N+1} e^{\left( t-s \theta \right) \nu \Delta} f \left( u \left( s \right) \right)}_{q} d \theta ds \\
		&\hspace{1cm} \leq C \int_{0}^{t/2} \int_{0}^{1} s^{N+1} \left( t-s \theta \right)^{- \frac{n}{2} \left( 1- \frac{1}{q} \right) - \left( N+1 \right)} \norm{u \left( s \right)}_{p}^{p} d \theta ds \\
		&\hspace{1cm} \leq Ct^{- \frac{n}{2} \left( 1- \frac{1}{q} \right) - \left( N+1 \right)} \int_{0}^{t/2} s^{N+1} \left( 1+s \right)^{- \frac{n}{2} \left( p-1 \right)} ds \\
		&\hspace{1cm} \leq Ct^{- \frac{n}{2} \left( 1- \frac{1}{q} \right) - \left( N+1 \right)} \int_{0}^{t/2} \left( 1+s \right)^{N- \sigma} ds.
	\end{align*}
	The integral appearing on the right hand side of the last inequality is estimated as
	\begin{align*}
		\int_{0}^{t/2} \left( 1+s \right)^{N- \sigma} ds \leq \begin{dcases}
			C \left( 1+ \frac{t}{2} \right)^{N+1- \sigma} \leq Ct^{N+1- \sigma}, &\qquad N< \sigma <N+1, \\
			\log \left( 1+ \frac{t}{2} \right) \leq \log \left( 2+t \right), &\qquad \sigma =N+1, \\
			C, &\qquad \sigma >N+1.
		\end{dcases}
	\end{align*}
	As a consequence, we can deduce the desired estimates.
\end{proof}

We next give the proof of Theorem \ref{th:P_asymptotics_lim}.

\begin{proof}[Proof of Theorem \ref{th:P_asymptotics_lim}]
	Let $q \in \left[ 1, + \infty \right]$ and let $N \coloneqq \left[ m/2 \right] = \max \left\{ j \in \Z_{\geq 0}; {\,} j \leq m/2 \right\}$.
	Then, we see that $m=2N$ or $m=2N+1$ and that
	\begin{align*}
		p &>1+ \frac{m+2}{n} \geq 1+ \frac{2 \left( N+1 \right)}{n}, \\
		\sigma &\coloneqq \frac{n}{2} \left( p-1 \right) -1> \frac{m}{2}.
	\end{align*}
	Therefore, by Lemma \ref{lem:P_appro}, we have
	\begin{align*}
		&t^{\frac{n}{2} \left( 1- \frac{1}{q} \right) + \frac{m}{2}} \norm{u \left( t \right) -e^{t \nu \Delta} u_{0} - \sum_{k=0}^{N} \frac{1}{k!} \left( - \nu \Delta \right)^{k} e^{t \nu \Delta} \psi_{k}}_{q} \\
		&\hspace{1cm} \leq \begin{dcases}
			Ct^{\frac{m}{2} - \sigma} =Ct^{- \left( \sigma - \frac{m}{2} \right)}, &\qquad N< \sigma <N+1, \\
			Ct^{\frac{m}{2} - \left( N+1 \right)} \log \left( 2+t \right) \leq Ct^{- \frac{1}{2}} \log \left( 2+t \right), &\qquad \sigma =N+1, \\
			Ct^{\frac{m}{2} - \left( N+1 \right)} \leq Ct^{- \frac{1}{2}}, &\qquad \sigma >N+1
		\end{dcases}
	\end{align*}
	for any $t>1$, whence follows
	\begin{align}
		\label{eq:4.a}
		\lim_{t \to + \infty} t^{\frac{n}{2} \left( 1- \frac{1}{q} \right) + \frac{m}{2}} \norm{u \left( t \right) -e^{t \nu \Delta} u_{0} - \sum_{k=0}^{N} \frac{1}{k!} \left( - \nu \Delta \right)^{k} e^{t \nu \Delta} \psi_{k}}_{q} =0.
	\end{align}
	Next, let $\gamma \in \Z_{\geq 0}^{n}$ with $\abs{\gamma} \leq N$ and let $\beta \in \Z_{\geq 0}^{n}$ with $\abs{\beta} \leq m-2 \abs{\gamma}$.
	Then, from the assumption that $p>1+ \left( m+2 \right) /n$, we see that
	\begin{align*}
		- \frac{n}{2} \left( p-1 \right) + \frac{\abs{\beta} +2 \abs{\gamma}}{2} <-1.
	\end{align*}
	By \eqref{eq:P_decay2} and Theorem \ref{th:P_weight}, we obtain
	\begin{align*}
		\lVert x^{\beta} \psi_{\abs{\gamma}} \rVert_{1} &\leq \int_{0}^{+ \infty} s^{\abs{\gamma}} {\,} \lVert x^{\beta} f \left( u \left( s \right) \right) \rVert_{1} {\,} ds \\
		&\leq C \int_{0}^{+ \infty} s^{\abs{\gamma}} \norm{u \left( s \right)}_{\infty}^{p-1} \lVert x^{\beta} u \left( s \right) \rVert_{1} {\,} ds \\
		&\leq C \int_{0}^{+ \infty} \left( 1+s \right)^{- \frac{n}{2} \left( p-1 \right) + \frac{\abs{\beta} +2 \abs{\gamma}}{2}} ds \\
		&\leq C,
	\end{align*}
	which implies $\psi_{\abs{\gamma}} \in L^{1}_{m-2 \abs{\gamma}} \left( \R^{n} \right)$.
	Therefore, it follows from Proposition \ref{pro:CGL_asymptotics_lim} that
	\begin{align}
		\label{eq:4.b}
		&\lim_{t \to + \infty} t^{\frac{n}{2} \left( 1- \frac{1}{q} \right) + \frac{m}{2}} \norm{e^{t \nu \Delta} u_{0} - \Lambda_{0, m} \left( t; u_{0} \right)}_{q} =0, \\
		\label{eq:4.c}
		&\lim_{t \to + \infty} t^{\frac{n}{2} \left( 1- \frac{1}{q} \right) + \frac{m}{2}} \norm{\partial^{2 \gamma} e^{t \nu \Delta} \psi_{\abs{\gamma}} - \Lambda_{2 \gamma, m-2 \abs{\gamma}} \left( t; \psi_{\abs{\gamma}} \right)}_{q} =0.
	\end{align}
	Combining \eqref{eq:4.a}, \eqref{eq:4.b}, \eqref{eq:4.c}, and the identity
	\begin{align*}
		\sum_{k=0}^{N} \frac{1}{k!} \left( - \nu \Delta \right)^{k}
		= \sum_{k=0}^{N} \frac{\left( - \nu \right)^{k}}{k!} \left( \sum_{j=1}^{n} \partial_{j}^{2} \right)^{k}
		= \sum_{k=0}^{N} \frac{\left( - \nu \right)^{k}}{k!} \left( \sum_{\abs{\gamma} =k} \frac{k!}{\gamma !} \partial^{2 \gamma} \right)
		= \sum_{\abs{\gamma} \leq N} \frac{\left( - \nu \right)^{\abs{\gamma}}}{\gamma !} \partial^{2 \gamma},
	\end{align*}
	we have
	\begin{align*}
		&\limsup_{t \to + \infty} t^{\frac{n}{2} \left( 1- \frac{1}{q} \right) + \frac{m}{2}} \norm{u \left( t \right) - \AA_{m} \left( t \right)}_{q} \\
		&\hspace{1cm} \leq \limsup_{t \to + \infty} t^{\frac{n}{2} \left( 1- \frac{1}{q} \right) + \frac{m}{2}} \norm{u \left( t \right) -e^{t \nu \Delta} u_{0} - \sum_{k=0}^{N} \frac{1}{k!} \left( - \nu \Delta \right)^{k} e^{t \nu \Delta} \psi_{k}}_{q} \\
		&\hspace{2cm} + \limsup_{t \to + \infty} t^{\frac{n}{2} \left( 1- \frac{1}{q} \right) + \frac{m}{2}} \norm{e^{t \nu \Delta} u_{0} - \Lambda_{0, m} \left( t; u_{0} \right)}_{q} \\
		&\hspace{2cm} + \sum_{\abs{\gamma} \leq N} \frac{\abs{\nu}^{\abs{\gamma}}}{\gamma !} \limsup_{t \to + \infty} t^{\frac{n}{2} \left( 1- \frac{1}{q} \right) + \frac{m}{2}} \norm{\partial^{2 \gamma} e^{t \nu \Delta} \psi_{\abs{\gamma}} - \Lambda_{2 \gamma, m-2 \abs{\gamma}} \left( t; \psi_{\abs{\gamma}} \right)}_{q} \\
		&\hspace{1cm} =0,
	\end{align*}
	whence follows the desired result.
\end{proof}

\section{Proof of Theorem \ref{th:P_asymptotics}} \label{sec:proof_asymptotics}

We show Theorem \ref{th:P_asymptotics} by estimating the difference $u \left( t \right) - \AA_{m} \left( t \right)$ directly without using Lemma \ref{lem:P_appro}.
Let $q \in \left[ 1, + \infty \right]$ and let $t>1$.
We define $N \coloneqq \left[ m/2 \right] = \max \left\{ j \in \Z_{\geq 0}; {\,} j \leq m/2 \right\}$.
Then, we see that $m=2N$ or $m=2N+1$ and that
\begin{align*}
	p &>1+ \frac{m+2}{n} \geq 1+ \frac{2 \left( N+1 \right)}{n}, \\
	\sigma &\coloneqq \frac{n}{2} \left( p-1 \right) -1> \frac{m}{2} \geq N.
\end{align*}
By virtue of Taylor's theorem with respect to the time variable, we obtain
\begin{align*}
	e^{\left( t-s \right) \nu \Delta} f \left( u \left( s \right) \right) &= \sum_{k=0}^{N} \frac{1}{k!} \left( -s \nu \Delta \right)^{k} e^{t \nu \Delta} f \left( u \left( s \right) \right) \\
	&\hspace{1cm} + \frac{1}{N!} \int_{0}^{1} \left( 1- \theta \right)^{N} \left( -s \nu \Delta \right)^{N+1} e^{\left( t-s \theta \right) \nu \Delta} f \left( u \left( s \right) \right) d \theta \\
	&= \sum_{\abs{\gamma} \leq N} \frac{\left( - \nu \right)^{\abs{\gamma}}}{\gamma !} s^{\abs{\gamma}} \partial^{2 \gamma} e^{t \nu \Delta} f \left( u \left( s \right) \right) \\
	&\hspace{1cm} + \left( N+1 \right) \sum_{\abs{\gamma} =N+1} \frac{\left( - \nu \right)^{N+1}}{\gamma !} \int_{0}^{1} \left( 1- \theta \right)^{N} s^{N+1} \partial^{2 \gamma} e^{\left( t-s \theta \right) \nu \Delta} f \left( u \left( s \right) \right) d \theta.
\end{align*}
Using \eqref{I} and the above identity, we decompose the remainder into five parts:
\begin{align*}
	&u \left( t \right) - \AA_{m} \left( t \right) \\
	&\hspace{0.5cm} = \left( e^{t \nu \Delta} u_{0} - \Lambda_{0, m} \left( t; u_{0} \right) \right) + \int_{t/2}^{t} e^{\left( t-s \right) \nu \Delta} f \left( u \left( s \right) \right) ds \\
	&\hspace{1.5cm} + \sum_{\abs{\gamma} \leq N} \frac{\left( - \nu \right)^{\abs{\gamma}}}{\gamma !} \int_{0}^{t/2} s^{\abs{\gamma}} \left( \partial^{2 \gamma} e^{t \nu \Delta} f \left( u \left( s \right) \right) - \Lambda_{2 \gamma, m-2 \abs{\gamma}} \left( t; f \left( u \left( s \right) \right) \right) \right) ds \\
	&\hspace{1.5cm} + \left( N+1 \right) \sum_{\abs{\gamma} =N+1} \frac{\left( - \nu \right)^{N+1}}{\gamma !} \int_{0}^{t/2} \int_{0}^{1} \left( 1- \theta \right)^{N} s^{N+1} \partial^{2 \gamma} e^{\left( t-s \theta \right) \nu \Delta} f \left( u \left( s \right) \right) d \theta ds \\
	&\hspace{1.5cm} - \sum_{\abs{\gamma} \leq N} \frac{\left( - \nu \right)^{\abs{\gamma}}}{\gamma !} \sum_{\abs{\beta} \leq m-2 \abs{\gamma}} 2^{- \left( \abs{\beta} +2 \abs{\gamma} \right)} t^{- \frac{\abs{\beta} +2 \abs{\gamma}}{2}} \left( \int_{t/2}^{+ \infty} s^{\abs{\gamma}} \MM_{\beta} \left( f \left( u \left( s \right) \right) \right) ds \right) \delta_{t} \left( \bm{h}_{\nu, 2 \gamma + \beta} G_{\nu} \right) \\
	&\hspace{0.5cm} \eqqcolon \sum_{\ell =1}^{5} J_{\ell} \left( t \right).
\end{align*}
Firstly, due to \eqref{eq:P_decay2} and Lemma \ref{lem:CGL_Lp-Lq}, we have
\begin{align*}
	\norm{J_{2} \left( t \right)}_{q}
	&\leq \int_{t/2}^{t} \norm{e^{\left( t-s \right) \nu \Delta} f \left( u \left( s \right) \right)}_{q} ds \leq C \int_{t/2}^{t} \norm{u \left( s \right)}_{pq}^{p} ds
	\\
	&\leq C \int_{t/2}^{t} s^{- \frac{n}{2} \left( 1- \frac{1}{pq} \right) p} ds \leq Ct^{- \frac{n}{2} \left( 1- \frac{1}{q} \right) - \sigma}, \\
	\norm{J_{4} \left( t \right)}_{q}
	&\leq C \sum_{\abs{\gamma} =N+1} \int_{0}^{t/2} \int_{0}^{1} s^{N+1} \norm{\partial^{2 \gamma} e^{\left( t-s \theta \right) \nu \Delta} f \left( u \left( s \right) \right)}_{q} d \theta ds \\
	&\leq C \sum_{\abs{\gamma} =N+1} \int_{0}^{t/2} \int_{0}^{1} s^{N+1} \left( t-s \theta \right)^{- \frac{n}{2} \left( 1- \frac{1}{q} \right) - \abs{\gamma}} \norm{u \left( s \right)}_{p}^{p} d \theta ds \\
	&\leq C \sum_{\abs{\gamma} =N+1} t^{- \frac{n}{2} \left( 1- \frac{1}{q} \right) - \abs{\gamma}} \int_{0}^{t/2} s^{N+1} \left( 1+s \right)^{- \frac{n}{2} \left( p-1 \right)} ds \\
	&\leq Ct^{- \frac{n}{2} \left( 1- \frac{1}{q} \right) - \left( N+1 \right)} \int_{0}^{t/2} \left( 1+s \right)^{N- \sigma} ds \\
	&\leq Ct^{- \frac{n}{2} \left( 1- \frac{1}{q} \right) - \left( N+1 \right)} \times \begin{cases}
		t^{N+1- \sigma}, &\qquad N< \sigma <N+1, \\
		\log \left( 2+t \right), &\qquad \sigma =N+1, \\
		1, &\qquad \sigma >N+1
	\end{cases} \\
	&\leq Ct^{- \frac{n}{2} \left( 1- \frac{1}{q} \right) - \frac{m}{2}} \times \begin{dcases}
		t^{- \left( \sigma - \frac{m}{2} \right)}, &\qquad 1+ \frac{m+2}{n} <p<1+ \frac{m+3}{n}, \\
		t^{- \frac{1}{2}} \log \left( 2+t \right), &\qquad p=1+ \frac{m+3}{n}, \\
		t^{- \frac{1}{2}}, &\qquad p>1+ \frac{m+3}{n}.
	\end{dcases}
\end{align*}
Secondly, it follows from \eqref{eq:P_decay2} and Theorem \ref{th:P_weight} that
\begin{align*}
	\norm{J_{5} \left( t \right)}_{q}
	&\leq C \sum_{\abs{\gamma} \leq N} \sum_{\abs{\beta} \leq m-2 \abs{\gamma}} t^{- \frac{\abs{\beta} +2 \abs{\gamma}}{2}} \left( \int_{t/2}^{+ \infty} s^{\abs{\gamma}} \abs{\MM_{\beta} \left( f \left( u \left( s \right) \right) \right)} ds \right) \norm{\delta_{t} \left( \bm{h}_{\nu, 2 \gamma + \beta} G_{\nu} \right)}_{q} \\
	&\leq C \sum_{\abs{\gamma} \leq N} \sum_{\abs{\beta} \leq m-2 \abs{\gamma}} t^{- \frac{n}{2} \left( 1- \frac{1}{q} \right) - \frac{\abs{\beta} +2 \abs{\gamma}}{2}} \int_{t/2}^{+ \infty} s^{\abs{\gamma}} \norm{u \left( s \right)}_{\infty}^{p-1} \lVert x^{\beta} u \left( s \right) \rVert_{1} {\,} ds \\
	&\leq C \sum_{\abs{\gamma} \leq N} \sum_{\abs{\beta} \leq m-2 \abs{\gamma}} t^{- \frac{n}{2} \left( 1- \frac{1}{q} \right) - \frac{\abs{\beta} +2 \abs{\gamma}}{2}} \int_{t/2}^{+ \infty} \left( 1+s \right)^{- \frac{n}{2} \left( p-1 \right) + \frac{\abs{\beta} + 2 \abs{\gamma}}{2}} ds \\
	&\leq C \sum_{\abs{\gamma} \leq N} \sum_{\abs{\beta} \leq m-2 \abs{\gamma}} t^{- \frac{n}{2} \left( 1- \frac{1}{q} \right) - \frac{\abs{\beta} +2 \abs{\gamma}}{2}} \left( 1+t \right)^{1- \frac{n}{2} \left( p-1 \right) + \frac{\abs{\beta} + 2 \abs{\gamma}}{2}} \\
	&\leq Ct^{- \frac{n}{2} \left( 1- \frac{1}{q} \right) - \sigma}.
\end{align*}
Here, we have used the fact that
\begin{align*}
	p>1+ \frac{m+2}{n} \quad \Longleftrightarrow \quad - \frac{n}{2} \left( p-1 \right) + \frac{m}{2} <-1.
\end{align*}
Finally, by \eqref{eq:P_decay2}, Theorem \ref{th:P_weight}, and Proposition \ref{pro:CGL_asymptotics}, we obtain
\begin{align*}
	\norm{J_{1} \left( t \right)}_{q} &\leq Ct^{- \frac{n}{2} \left( 1- \frac{1}{q} \right) - \frac{m+1}{2}} \sum_{\abs{\alpha} =m+1} \norm{x^{\alpha} u_{0}}_{1}, \\
	\norm{J_{3} \left( t \right)}_{q}
	&\leq Ct^{- \frac{n}{2} \left( 1- \frac{1}{q} \right) - \frac{m+1}{2}} \sum_{\abs{\gamma} \leq N} \sum_{\abs{\beta} =m-2 \abs{\gamma} +1} \int_{0}^{t/2} s^{\abs{\gamma}} \norm{u \left( s \right)}_{\infty}^{p-1} \lVert x^{\beta} u \left( s \right) \rVert_{1} {\,} ds \\
	&\leq Ct^{- \frac{n}{2} \left( 1- \frac{1}{q} \right) - \frac{m+1}{2}} \sum_{\abs{\gamma} \leq N} \sum_{\abs{\beta} =m-2 \abs{\gamma} +1} \int_{0}^{t/2} \left( 1+s \right)^{- \frac{n}{2} \left( p-1 \right) + \frac{\abs{\beta} +2 \abs{\gamma}}{2}} ds \\
	&\leq Ct^{- \frac{n}{2} \left( 1- \frac{1}{q} \right) - \frac{m+1}{2}} \int_{0}^{t/2} \left( 1+s \right)^{- \frac{n}{2} \left( p-1 \right) + \frac{m+1}{2}} ds \\
	&\leq Ct^{- \frac{n}{2} \left( 1- \frac{1}{q} \right) - \frac{m+1}{2}} \times \begin{dcases}
		\left( 1+t \right)^{1- \frac{n}{2} \left( p-1 \right) + \frac{m+1}{2}}, &\qquad p<1+ \frac{m+3}{n}, \\
		\log \left( 2+t \right), &\qquad p=1+ \frac{m+3}{n}, \\
		1, &\qquad p>1+ \frac{m+3}{n}
	\end{dcases} \\
	&\leq Ct^{- \frac{n}{2} \left( 1- \frac{1}{q} \right) - \frac{m}{2}} \times \begin{dcases}
		t^{- \left( \sigma - \frac{m}{2} \right)}, &\qquad p<1+ \frac{m+3}{n}, \\
		t^{- \frac{1}{2}} \log \left( 2+t \right), &\qquad p=1+ \frac{m+3}{n}, \\
		t^{- \frac{1}{2}}, &\qquad p>1+ \frac{m+3}{n}.
	\end{dcases}
\end{align*}
Combining these estimates, we conclude that
\begin{align*}
	t^{\frac{n}{2} \left( 1- \frac{1}{q} \right) + \frac{m}{2}} \norm{u \left( t \right) - \AA_{m} \left( t \right)}_{q} &\leq \begin{dcases}
		Ct^{- \left( \sigma - \frac{m}{2} \right)}, &\qquad 1+ \frac{m+2}{n} <p<1+ \frac{m+3}{n}, \\
		Ct^{- \frac{1}{2}} \log \left( 2+t \right), &\qquad p=1+ \frac{m+3}{n}, \\
		Ct^{- \frac{1}{2}}, &\qquad p>1+ \frac{m+3}{n}
	\end{dcases}
\end{align*}
for all $t>1$.
On the other hand, by a simple calculation, we have
\begin{align*}
	t^{\frac{n}{2} \left( 1- \frac{1}{q} \right) + \frac{m}{2}} \norm{u \left( t \right) - \AA_{m} \left( t \right)}_{q} \leq t^{\frac{n}{2} \left( 1- \frac{1}{q} \right) + \frac{m}{2}} \left( \norm{u \left( t \right)}_{q} + \norm{\AA_{m} \left( t \right)}_{q} \right) \leq C
\end{align*}
for all $t \in \left( 0, 1 \right]$.
As a consequence, we arrive at the desired estimates.
\qed

\begin{rem}
	By using Lemma \ref{lem:P_appro}, we can prove Theorem \ref{th:P_asymptotics} in the case where $p>1+ \left( m+3 \right) /n$, while we cannot prove Theorem \ref{th:P_asymptotics} in the case where $1+ \left( m+2 \right) /n<p \leq 1+ \left( m+3 \right) /n$.
	This implies that the approximation given in Lemma \ref{lem:P_appro} is too rough to derive the principal part near $t=0$.
\end{rem}

\section{Proof of Theorem \ref{th:P_asymptotics_optimal}} \label{sec:proof_optimal}

Before the proof of Theorem \ref{th:P_asymptotics_optimal}, we prepare the following lemma which describes the structure of the asymptotic profiles given in Theorems \ref{th:P_asymptotics_lim} and \ref{th:P_asymptotics}.

\begin{lem} \label{lem:P_Lambda}
	Under the same assumption and notation as in Theorem \ref{th:P_asymptotics_lim}, the identity
	\begin{align*}
		\AA_{k} \left( t \right) - \AA_{k-1} \left( t \right) =t^{- \frac{k}{2}} \delta_{t} \left( \AA_{k} \left( 1 \right) - \AA_{k-1} \left( 1 \right) \right)
	\end{align*}
	holds for any $t>0$ and $k \in \left\{ 0, \ldots, m \right\}$, where $\AA_{-1} \left( t \right) \equiv 0$.
	Moreover, $\AA_{k} \left( 1 \right) - \AA_{k-1} \left( 1 \right)$ is represented explicitly as
	\begin{align*}
		\AA_{k} \left( 1 \right) - \AA_{k-1} \left( 1 \right) =2^{-k} \sum_{\abs{\alpha} =k} {\,} \Biggl( \MM_{\alpha} \left( u_{0} \right) + \sum_{\substack{\beta +2 \gamma = \alpha \\ \abs{\gamma} \leq k/2}} \frac{\left( - \nu \right)^{\abs{\gamma}}}{\gamma !} \MM_{\beta} \left( \psi_{\abs{\gamma}} \right) \Biggr) {\,} \bm{h}_{\nu, \alpha} G_{\nu}.
	\end{align*}
\end{lem}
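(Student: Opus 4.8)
The plan is to reorganize the profile $\AA_m(t)$ according to the \emph{total order} of its constituent terms and to show that each order-$j$ block coincides with $\AA_j(t)-\AA_{j-1}(t)$. Recall from the definition of $\Lambda_{\alpha,N}$ that a single summand of $\AA_m(t)$ coming from $\Lambda_{0,m}(t;u_0)$ (with multi-index $\beta$, $\abs{\beta}=k$) carries the factor $2^{-k}t^{-k/2}$ together with $\delta_t(\bm{h}_{\nu,\beta}G_\nu)$, while a summand coming from $\frac{(-\nu)^{\abs{\gamma}}}{\gamma!}\Lambda_{2\gamma,m-2\abs{\gamma}}(t;\psi_{\abs{\gamma}})$ (with multi-index $\beta$, $\abs{\beta}=k$) carries $\frac{(-\nu)^{\abs{\gamma}}}{\gamma!}(-2)^{-2\abs{\gamma}}2^{-k}t^{-\abs{\gamma}-k/2}$ together with $\delta_t(\bm{h}_{\nu,2\gamma+\beta}G_\nu)$. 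I assign to each such summand the total order $j\coloneqq\abs{\beta}$ in the first case and $j\coloneqq 2\abs{\gamma}+\abs{\beta}$ in the second; in both cases the Hermite index has modulus $j$ and the explicit power of $t$ is $t^{-j/2}$.

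The decisive computation is the regrouping. Using $(-2)^{-2\abs{\gamma}}=2^{-2\abs{\gamma}}$, I find that the $t$- and $2$-power prefactor of every order-$j$ term equals $2^{-j}t^{-j/2}$, regardless of whether it originates from $u_0$ or from some $\psi_{\abs{\gamma}}$. Summing all order-$j$ contributions and collecting them according to the value $\alpha\coloneqq\beta$ (resp. $\alpha\coloneqq 2\gamma+\beta$) of the Hermite index, I obtain
\begin{align*}
	P_j(t)\coloneqq 2^{-j}t^{-\frac{j}{2}}\sum_{\abs{\alpha}=j}\Biggl(\MM_\alpha(u_0)+\sum_{\substack{\beta+2\gamma=\alpha\\\abs{\gamma}\leq j/2}}\frac{(-\nu)^{\abs{\gamma}}}{\gamma!}\MM_\beta(\psi_{\abs{\gamma}})\Biggr)\delta_t(\bm{h}_{\nu,\alpha}G_\nu),
\end{align*}
so that $\AA_m(t)=\sum_{j=0}^{m}P_j(t)$. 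Here the constraint $\abs{\gamma}\leq j/2$ appears precisely because $\abs{\beta}=j-2\abs{\gamma}\geq 0$, and the outer range $\abs{\gamma}\leq m/2$ is automatically respected since $j\leq m$.

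To conclude, I will check that the order-$j$ block is insensitive to the truncation level: for every $k\geq j$ the order-$j$ part of $\AA_k(t)$ coincides with $P_j(t)$, because raising the truncation from $j$ to $m$ only adds summands of total order strictly larger than $j$, and $\AA_k(t)$ contains no term of order exceeding $k$. Consequently $\AA_k(t)=\sum_{j=0}^{k}P_j(t)$ and hence $\AA_k(t)-\AA_{k-1}(t)=P_k(t)$ for every $k\in\{0,\ldots,m\}$ (with $\AA_{-1}\equiv 0$). Setting $t=1$ in the formula for $P_k$ and using $\delta_1=\mathrm{id}$ yields exactly the claimed explicit representation of $\AA_k(1)-\AA_{k-1}(1)$; the stratification identity then follows at once from the linearity of $\delta_t$, since $P_k(t)=t^{-k/2}\delta_t\bigl(P_k(1)\bigr)$. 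The only delicate point is the index bookkeeping in the regrouping step, namely matching the ranges of $k$, $\beta$, and $\gamma$ across the two sources and confirming that the sign $(-2)^{-2\abs{\gamma}}$ collapses to $2^{-2\abs{\gamma}}$, but this is purely combinatorial and involves no analysis.
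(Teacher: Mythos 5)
Your proof is correct, and it reaches the result by a genuinely different organization of the same underlying computation. The paper computes $\AA_{k} \left( t \right) - \AA_{k-1} \left( t \right)$ head-on: it sets $N= \left[ \left( k-1 \right) /2 \right]$ and splits into the cases $k=2N+1$ and $k=2 \left( N+1 \right)$, because the range $\abs{\gamma} \leq k/2$ acquires a new value of $\abs{\gamma}$ precisely when $k$ is even; in each case the difference of consecutive truncations $\Lambda_{2 \gamma, k-2 \abs{\gamma}} - \Lambda_{2 \gamma, k-1-2 \abs{\gamma}}$ reduces to the single top-order summand, and the same prefactor identity you use, namely $\left( -2 \right)^{-2 \abs{\gamma}} 2^{- \left( k-2 \abs{\gamma} \right)} t^{- \abs{\gamma} - \left( k-2 \abs{\gamma} \right) /2} =2^{-k} t^{-k/2}$, then assembles the claimed formula. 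You instead regroup the whole profile into $t$-homogeneous blocks $P_{j}$ and observe that these blocks do not depend on the truncation level, so that $\AA_{k} = \sum_{j=0}^{k} P_{j}$ and the difference $\AA_{k} - \AA_{k-1} =P_{k}$ falls out with no case analysis. The decisive step --- collapsing $\left( -2 \right)^{-2 \abs{\gamma}}$ to $2^{-2 \abs{\gamma}}$ and matching the powers of $2$ and $t$ across the contributions of $u_{0}$ and of the $\psi_{\abs{\gamma}}$ --- is identical in both arguments; what your regrouping buys is the elimination of the parity distinction and a transparent explanation of the stratification structure (each $\AA_{k}$ is a partial sum of one fixed family of self-similar blocks), at the cost of the extra bookkeeping of truncation-independence, which you justify correctly by noting that $\abs{\gamma} \leq j/2$ together with $j \leq k$ subsumes the outer constraint $\abs{\gamma} \leq k/2$.
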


\begin{proof}
	The case where $k=0$ follows from $\bm{h}_{\nu, 0} \equiv 1$ and
	\begin{align*}
		\AA_{0} \left( t \right) = \left( \MM_{0} \left( u_{0} \right) + \MM_{0} \left( \psi_{0} \right) \right) \delta_{t} G_{\nu}.
	\end{align*}
	We next show the case where $k \in \left\{ 1, \ldots, m \right\}$.
	Let $t>0$ and let
	\begin{align*}
		N \coloneqq \left[ \frac{k-1}{2} \right] = \max \left\{ j \in \Z_{\geq 0}; {\,} j \leq \frac{k-1}{2} \right\}.
	\end{align*}
	Then, we see that $k=2N+1$ or $k=2 \left( N+1 \right)$ and that
	\begin{align*}
		\left[ \frac{k}{2} \right] = \begin{cases}
			N, &\qquad k=2N+1, \\
			N+1, &\qquad k=2 \left( N+1 \right).
		\end{cases}
	\end{align*}
	For the case where $k=2N+1$, we obtain
	\begin{align*}
		\AA_{k} \left( t \right) - \AA_{k-1} \left( t \right) &= \left( \Lambda_{0, k} \left( t; u_{0} \right) - \Lambda_{0, k-1} \left( t; u_{0} \right) \right) \\
		&\hspace{1cm} + \sum_{\abs{\gamma} \leq N} \frac{\left( - \nu \right)^{\abs{\gamma}}}{\gamma !} \left( \Lambda_{2 \gamma, k-2 \abs{\gamma}} \left( t; \psi_{\abs{\gamma}} \right) - \Lambda_{2 \gamma, k-1-2 \abs{\gamma}} \left( t; \psi_{\abs{\gamma}} \right) \right) \\
		&=2^{-k} t^{- \frac{k}{2}} \sum_{\abs{\alpha} =k} \MM_{\alpha} \left( u_{0} \right) \delta_{t} \left( \bm{h}_{\nu, \alpha} G_{\nu} \right) \\
		&\hspace{1cm} +2^{-k} t^{- \frac{k}{2}} \sum_{\abs{\gamma} \leq N} \frac{\left( - \nu \right)^{\abs{\gamma}}}{\gamma !} \sum_{\abs{\beta} =k-2 \abs{\gamma}} \MM_{\beta} \left( \psi_{\abs{\gamma}} \right) \delta_{t} \left( \bm{h}_{\nu, 2 \gamma + \beta} G_{\nu} \right).
	\end{align*}
	In the same way, for the case where $k=2 \left( N+1 \right)$, we have
	\begin{align*}
		\AA_{k} \left( t \right) - \AA_{k-1} \left( t \right)
		&= \left( \Lambda_{0, k} \left( t; u_{0} \right) - \Lambda_{0, k-1} \left( t; u_{0} \right) \right) \\
		&\hspace{1cm} + \sum_{\abs{\gamma} \leq N} \frac{\left( - \nu \right)^{\abs{\gamma}}}{\gamma !} \left( \Lambda_{2 \gamma, k-2 \abs{\gamma}} \left( t; \psi_{\abs{\gamma}} \right) - \Lambda_{2 \gamma, k-1-2 \abs{\gamma}} \left( t; \psi_{\abs{\gamma}} \right) \right) \\
		&\hspace{1cm} + \sum_{\abs{\gamma} =N+1} \frac{\left( - \nu \right)^{\abs{\gamma}}}{\gamma !} \Lambda_{2 \gamma, k-2 \abs{\gamma}} \left( t; \psi_{\abs{\gamma}} \right) \\
		&=2^{-k} t^{- \frac{k}{2}} \sum_{\abs{\alpha} =k} \MM_{\alpha} \left( u_{0} \right) \delta_{t} \left( \bm{h}_{\nu, \alpha} G_{\nu} \right) \\
		&\hspace{1cm} +2^{-k} t^{- \frac{k}{2}} \sum_{\abs{\gamma} \leq N} \frac{\left( - \nu \right)^{\abs{\gamma}}}{\gamma !} \sum_{\abs{\beta} =k-2 \abs{\gamma}} \MM_{\beta} \left( \psi_{\abs{\gamma}} \right) \delta_{t} \left( \bm{h}_{\nu, 2 \gamma + \beta} G_{\nu} \right) \\
		&\hspace{1cm} +2^{-2 \left( N+1 \right)} t^{- \left( N+1 \right)} \sum_{\abs{\gamma} =N+1} \frac{\left( - \nu \right)^{\abs{\gamma}}}{\gamma !} \MM_{0} \left( \psi_{\abs{\gamma}} \right) \delta_{t} \left( \bm{h}_{\nu, 2 \gamma} G_{\nu} \right) \\
		&=2^{-k} t^{- \frac{k}{2}} \sum_{\abs{\alpha} =k} \MM_{\alpha} \left( u_{0} \right) \delta_{t} \left( \bm{h}_{\nu, \alpha} G_{\nu} \right) \\
		&\hspace{1cm} +2^{-k} t^{- \frac{k}{2}} \sum_{\abs{\gamma} \leq N+1} \frac{\left( - \nu \right)^{\abs{\gamma}}}{\gamma !} \sum_{\abs{\beta} =k-2 \abs{\gamma}} \MM_{\beta} \left( \psi_{\abs{\gamma}} \right) \delta_{t} \left( \bm{h}_{\nu, 2 \gamma + \beta} G_{\nu} \right).
	\end{align*}
	In any case, since the dilation $\delta_{t}$ is linear, $\AA_{k} \left( t \right) - \AA_{k-1} \left( t \right)$ is represented as
	\begin{align*}
		&\AA_{k} \left( t \right) - \AA_{k-1} \left( t \right) \\
		&\hspace{1cm} =2^{-k} t^{- \frac{k}{2}} \delta_{t} \left( \sum_{\abs{\alpha} =k} \MM_{\alpha} \left( u_{0} \right) \bm{h}_{\nu, \alpha} G_{\nu} + \sum_{\abs{\gamma} \leq k/2} \frac{\left( - \nu \right)^{\abs{\gamma}}}{\gamma !} \sum_{\abs{\beta} =k-2 \abs{\gamma}} \MM_{\beta} \left( \psi_{\abs{\gamma}} \right) \bm{h}_{\nu, 2 \gamma + \beta} G_{\nu} \right).
	\end{align*}
	In particular, substituting $t=1$ to the above identity yields
	\begin{align*}
		&\AA_{k} \left( 1 \right) - \AA_{k-1} \left( 1 \right) \\
		&\hspace{1cm} =2^{-k} \left( \sum_{\abs{\alpha} =k} \MM_{\alpha} \left( u_{0} \right) \bm{h}_{\nu, \alpha} G_{\nu} + \sum_{\abs{\gamma} \leq k/2} \frac{\left( - \nu \right)^{\abs{\gamma}}}{\gamma !} \sum_{\abs{\beta} =k-2 \abs{\gamma}} \MM_{\beta} \left( \psi_{\abs{\gamma}} \right) \bm{h}_{\nu, 2 \gamma + \beta} G_{\nu} \right) \\
		&\hspace{1cm} =2^{-k} \sum_{\abs{\alpha} =k} {\,} \Biggl( \MM_{\alpha} \left( u_{0} \right) + \sum_{\substack{\beta +2 \gamma = \alpha \\ \abs{\gamma} \leq k/2}} \frac{\left( - \nu \right)^{\abs{\gamma}}}{\gamma !} \MM_{\beta} \left( \psi_{\abs{\gamma}} \right) \Biggr) {\,} \bm{h}_{\nu, \alpha} G_{\nu}.
	\end{align*}
	This completes the proof.
\end{proof}

Now, we are ready to show Theorem \ref{th:P_asymptotics_optimal}.

\begin{proof}[Proof of Theorem \ref{th:P_asymptotics_optimal}]
	By Lemma \ref{lem:P_Lambda}, we obtain
	\begin{align*}
		u \left( t \right) - \AA_{m} \left( t \right)
		&=u \left( t \right) - \AA_{m+1} \left( t \right) + \left( \AA_{m+1} \left( t \right) - \AA_{m} \left( t \right) \right) \\
		&=u \left( t \right) - \AA_{m+1} \left( t \right) +t^{- \frac{m+1}{2}} \delta_{t} \left( \AA_{m+1} \left( 1 \right) - \AA_{m} \left( 1 \right) \right)
	\end{align*}
	for any $t>0$.
	Due to Theorem \ref{th:P_asymptotics_lim}, we have
	\begin{align*}
		\limsup_{t \to + \infty} t^{\frac{n}{2} \left( 1- \frac{1}{q} \right) + \frac{m}{2} + \frac{1}{2}} \norm{u \left( t \right) - \AA_{m} \left( t \right)}_{q}
		&\leq \limsup_{t \to + \infty} t^{\frac{n}{2} \left( 1- \frac{1}{q} \right) + \frac{m}{2} + \frac{1}{2}} \norm{u \left( t \right) - \AA_{m+1} \left( t \right)}_{q} \\
		&\hspace{1cm} + \limsup_{t \to + \infty} t^{\frac{n}{2} \left( 1- \frac{1}{q} \right)} \norm{\delta_{t} \left( \AA_{m+1} \left( 1 \right) - \AA_{m} \left( 1 \right) \right)}_{q} \\
		&= \norm{\AA_{m+1} \left( 1 \right) - \AA_{m} \left( 1 \right)}_{q}, \\
		\liminf_{t \to + \infty} t^{\frac{n}{2} \left( 1- \frac{1}{q} \right) + \frac{m}{2} + \frac{1}{2}} \norm{u \left( t \right) - \AA_{m} \left( t \right)}_{q}
		&\geq \liminf_{t \to + \infty} t^{\frac{n}{2} \left( 1- \frac{1}{q} \right)} \norm{\delta_{t} \left( \AA_{m+1} \left( 1 \right) - \AA_{m} \left( 1 \right) \right)}_{q} \\
		&\hspace{1cm} - \limsup_{t \to + \infty} t^{\frac{n}{2} \left( 1- \frac{1}{q} \right) + \frac{m}{2} + \frac{1}{2}} \norm{u \left( t \right) - \AA_{m+1} \left( t \right)}_{q} \\
		&= \norm{\AA_{m+1} \left( 1 \right) - \AA_{m} \left( 1 \right)}_{q}
	\end{align*}
	for any $q \in \left[ 1, + \infty \right]$, which implies
	\begin{align*}
		\lim_{t \to + \infty} t^{\frac{n}{2} \left( 1- \frac{1}{q} \right) + \frac{m}{2} + \frac{1}{2}} \norm{u \left( t \right) - \AA_{m} \left( t \right)}_{q} = \norm{\AA_{m+1} \left( 1 \right) - \AA_{m} \left( 1 \right)}_{q}.
	\end{align*}
	The equivalence $\text{(i)} \Leftrightarrow \text{(ii)}$ follows from the orthogonality of the Hermite polynomials, namely,
	\begin{align*}
		\int_{\R^{n}} \bm{h}_{\nu, \alpha} \left( x \right) \bm{h}_{\nu, \beta} \left( x \right) G_{\nu} \left( x \right) dx= \begin{dcases}
			\left( \frac{2}{\nu} \right)^{\abs{\alpha}} \alpha !, &\qquad \alpha = \beta, \\
			0, &\qquad \alpha \neq \beta.
		\end{dcases}
	\end{align*}
	The implication $\text{(i)} \Rightarrow \text{(iii)}$ is a paraphrase of \eqref{eq:P_asymptotics_optimal} with $\norm{\AA_{m+1} \left( 1 \right) - \AA_{m} \left( 1 \right)}_{q} >0$, and the implication $\text{(iii)} \Rightarrow \text{(i)}$ is clear.
\end{proof}

\appendix
\section{Asymptotics for small amplitude solutions} \label{sec:appendix}
In this appendix, we give a sufficient condition for the initial data to ensure $\AA_{m+1} \left( 1 \right) - \AA_{m} \left( 1 \right) \not\equiv 0$.
Our goal can be stated as follows.

\begin{prop} \label{pro:P_small_optimal}
	Let $m \in \Z_{\geq 0}$ and let $p>1+ \left( m+3 \right) /n$.
	Let $\varphi \in \left( L^{1}_{m+1} \cap L^{\infty} \right) \left( \R^{n} \right)$ satisfy $\MM_{\alpha} \left( \varphi \right) \neq 0$ for some $\alpha \in \Z_{\geq 0}^{n}$ with $\abs{\alpha} =m+1$.
	Then, there exists $\varepsilon_{\varphi} >0$ such that for any $\varepsilon \in \left( 0, \varepsilon_{\varphi} \right]$, a global solution $u \in X$ to \eqref{P} with $u_{0} = \varepsilon \varphi$ satisfies $\AA_{m+1} \left( 1 \right) - \AA_{m} \left( 1 \right) \not\equiv 0$.
\end{prop}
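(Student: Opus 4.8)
The plan is to reduce the assertion to the moment condition (ii) of Theorem \ref{th:P_asymptotics_optimal} and to show that, once the data are scaled by $\varepsilon$, the nonlinear contribution to that condition is of strictly higher order in $\varepsilon$ than the linear contribution of the initial data. Fix $\alpha \in \Z_{\geq 0}^{n}$ with $\abs{\alpha} = m+1$ and $\MM_{\alpha}(\varphi) \neq 0$, and let $u = u^{(\varepsilon)}$ be the small global solution of \eqref{I} with $u_{0} = \varepsilon \varphi$ produced by the contraction argument. By the equivalence $\mathrm{(i)} \Leftrightarrow \mathrm{(ii)}$ in Theorem \ref{th:P_asymptotics_optimal}, it suffices to prove that for $\varepsilon$ small the number
\begin{align*}
	c_{\alpha}(\varepsilon) \coloneqq \varepsilon \MM_{\alpha}(\varphi) + \sum_{\substack{\beta + 2\gamma = \alpha \\ \abs{\gamma} \leq (m+1)/2}} \frac{(-\nu)^{\abs{\gamma}}}{\gamma !} \MM_{\beta}(\psi_{\abs{\gamma}})
\end{align*}
is nonzero. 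Since the linear term $\varepsilon \MM_{\alpha}(\varphi)$ is exactly of order $\varepsilon$, the heart of the matter is the bound $\abs{\MM_{\beta}(\psi_{\abs{\gamma}})} \leq C \varepsilon^{p}$; granting it, the superlinearity $p>1$ gives
\begin{align*}
	\abs{c_{\alpha}(\varepsilon)} \geq \varepsilon \bigl( \abs{\MM_{\alpha}(\varphi)} - C \varepsilon^{p-1} \bigr) > 0
\end{align*}
for all $\varepsilon$ below a threshold $\varepsilon_{\varphi}$ depending only on $\varphi$ and the parameters, which is the claim.

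To obtain the $\varepsilon^{p}$ bound I would first record that the solution is $\varepsilon$-linear in size: for $\varepsilon$ small the contraction argument (cf. \cite[Appendix A]{Kusaba-Ozawa}) yields $u^{(\varepsilon)}$ with time-weighted norm controlled by the data, so that $\sup_{q}\sup_{t>0}(1+t)^{\frac{n}{2}(1-\frac1q)}\norm{u^{(\varepsilon)}(t)}_{q} \leq C_{0}\varepsilon$, because the linear evolution contributes $O(\varepsilon)$ and the Duhamel term $O(\varepsilon^{p})$. In particular $\norm{u^{(\varepsilon)}(s)}_{\infty}^{p-1} \leq (C_{0}\varepsilon)^{p-1}(1+s)^{-\frac{n}{2}(p-1)}$ and $\norm{u^{(\varepsilon)}(s)}_{p}^{p} = O(\varepsilon^{p})$ with the same decay. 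Next I would upgrade Theorem \ref{th:P_weight} to an $\varepsilon$-linear weighted bound,
\begin{align*}
	\norm{\abs{x}^{\abs{\beta}} u^{(\varepsilon)}(t)}_{1} \leq C \varepsilon \bigl( 1 + t^{\abs{\beta}/2} \bigr), \qquad \abs{\beta} \leq m+1,
\end{align*}
by re-running the induction of Section \ref{sec:proof_weight} while tracking powers of $\varepsilon$. The key structural observation is that every source term in the Gr\"{o}nwall inequality there is either the weighted datum $\varepsilon \norm{x^{\beta}\varphi}_{1}$ (exactly linear) or a nonlinear forcing carrying $\norm{u^{(\varepsilon)}(s)}_{p}^{p} = O(\varepsilon^{p})$, while the Gr\"{o}nwall coefficient $\eta(s) = C\norm{u^{(\varepsilon)}(s)}_{\infty}^{p-1} = O(\varepsilon^{p-1})$ has finite integral; hence the exponential factor stays bounded for $\varepsilon \leq 1$, and since $p>1$ the $\varepsilon^{p}$ contributions are absorbed into the $\varepsilon$ contributions.

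With these two ingredients the nonlinear moments are estimated exactly as in the proof of Theorem \ref{th:P_asymptotics_lim}: using $\abs{f(\xi)} \leq K\abs{\xi}^{p}$ and $\beta + 2\gamma = \alpha$ (so that $\abs{\beta} + 2\abs{\gamma} = m+1$),
\begin{align*}
	\abs{\MM_{\beta}(\psi_{\abs{\gamma}})}
	&\leq \frac{K}{\beta !} \int_{0}^{+\infty} s^{\abs{\gamma}} \norm{u^{(\varepsilon)}(s)}_{\infty}^{p-1} \norm{\abs{x}^{\abs{\beta}} u^{(\varepsilon)}(s)}_{1} \, ds \\
	&\leq C \varepsilon^{p} \int_{0}^{+\infty} s^{\abs{\gamma}} (1+s)^{-\frac{n}{2}(p-1)} \bigl( 1 + s^{\abs{\beta}/2} \bigr) \, ds,
\end{align*}
and the last integral converges precisely because $\abs{\gamma} + \abs{\beta}/2 - \frac{n}{2}(p-1) < -1$, which is equivalent to $p > 1 + (m+3)/n$, i.e.\ exactly the hypothesis. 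This gives $\abs{\MM_{\beta}(\psi_{\abs{\gamma}})} \leq C\varepsilon^{p}$ and closes the argument.

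The main obstacle I anticipate is the $\varepsilon$-linear weighted estimate: one must check that the bookkeeping of $\varepsilon$-powers survives the long induction of Section \ref{sec:proof_weight} without degradation, i.e.\ that no step secretly produces an $\varepsilon$-independent constant. What makes this work is the clean dichotomy between the \emph{linear} way the data enter and the \emph{superlinear} ($p>1$) self-interaction, so that for small $\varepsilon$ the nonlinearity is a genuine perturbation and cannot overwhelm the leading term $\varepsilon \MM_{\alpha}(\varphi)$.
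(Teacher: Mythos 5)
Your proposal is correct and takes essentially the same approach as the paper: reduction to condition (ii) of Theorem \ref{th:P_asymptotics_optimal}, an $\varepsilon$-linear decay bound plus an $\varepsilon$-linear weighted bound giving $\abs{\MM_{\beta} \left( \psi_{\abs{\gamma}} \right)} \leq C \varepsilon^{p}$ (the time integral converging precisely because $p>1+ \left( m+3 \right) /n$), and a choice of $\varepsilon_{\varphi}$ so that the $O \left( \varepsilon^{p} \right)$ correction cannot cancel $\varepsilon \MM_{\alpha} \left( \varphi \right)$. The paper packages your ``re-run the weighted induction with $\varepsilon$-bookkeeping'' step more economically as Lemma \ref{lem:P_weight_small}: since Theorem \ref{th:P_weight} already supplies finiteness of the weighted norms, a single Gr\"{o}nwall argument with the crude bound \eqref{eq:CGL_weight} yields the data-linear estimate, and uniqueness in $X$ (Lemma \ref{lem:P_SDGE}) identifies the arbitrary solution in the statement with the contraction solution you work with.
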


To show this proposition, we prepare some lemmas.

\begin{lem} \label{lem:P_SDGE}
	Let $p>1+2/n$.
	Then, there exists $\varepsilon_{0} >0$ such that for any $u_{0} \in \left( L^{1} \cap L^{\infty} \right) \left( \R^{n} \right)$ with $\norm{u_{0}}_{1} + \norm{u_{0}}_{\infty} \leq \varepsilon_{0}$, \eqref{P} has a unique global solution $u \in X$, which satisfies
	\begin{align}
		\label{eq:P_decay_small}
		M \coloneqq \sup_{q \in \left[ 1, + \infty \right]} \sup_{t>0} \left( 1+t \right)^{\frac{n}{2} \left( 1- \frac{1}{q} \right)} \norm{u \left( t \right)}_{q} \leq C \left( \norm{u_{0}}_{1} + \norm{u_{0}}_{\infty} \right)
	\end{align}
	for some $C>0$ independent of $u_{0}$.
\end{lem}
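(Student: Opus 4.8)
The plan is to construct the solution by the Banach fixed-point theorem applied to the map
\[
\Phi(u)(t) := e^{t\nu\Delta}u_{0} + \int_{0}^{t} e^{(t-s)\nu\Delta} f(u(s))\,ds
\]
coming from \eqref{I}, on the closed ball $B_{R} := \{\, u \in X : \norm{u}_{*} \le R \,\}$, where $\norm{u}_{*} := \sup_{q\in[1,+\infty]}\sup_{t>0}(1+t)^{\frac{n}{2}(1-1/q)}\norm{u(t)}_{q}$ and $R>0$ is to be chosen; $B_R$ with the distance $d(u,v)=\norm{u-v}_{*}$ is a complete metric space. Note that the conclusion \eqref{eq:P_decay_small} is exactly the assertion $\norm{u}_{*}=M<+\infty$ together with $M\le C(\norm{u_{0}}_{1}+\norm{u_{0}}_{\infty})$, so the entire task reduces to showing that $\Phi$ maps $B_R$ into itself and is a contraction. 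First I would record the linear estimate $(1+t)^{\frac{n}{2}(1-1/q)}\norm{e^{t\nu\Delta}u_{0}}_{q}\le C_{0}(\norm{u_{0}}_{1}+\norm{u_{0}}_{\infty})$, uniformly in $q$ and $t$: for $t\le 1$ this follows from Young's inequality $\norm{e^{t\nu\Delta}u_{0}}_{q}\le\norm{G_{\nu}}_{1}\norm{u_{0}}_{q}$ and $\norm{u_{0}}_{q}\le\norm{u_{0}}_{1}+\norm{u_{0}}_{\infty}$, while for $t\ge 1$ it follows from the $L^{1}$--$L^{q}$ bound of Lemma~\ref{lem:CGL_Lp-Lq} with $\alpha=0$.

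The heart of the matter is the nonlinear estimate. Using $f(0)=0$ and the hypothesis on $f$ with $\eta=0$ gives the growth bound $\abs{f(\xi)}\le K\abs{\xi}^{p}$, whence $\norm{f(u(s))}_{r}\le K\norm{u(s)}_{pr}^{p}\le KR^{p}(1+s)^{-\frac{np}{2}(1-\frac{1}{pr})}$ for $u\in B_{R}$. To bound the Duhamel contribution to $\norm{\Phi(u)}_{*}$ uniformly over $q\in[1,+\infty]$, I would split $\int_{0}^{t}=\int_{0}^{t/2}+\int_{t/2}^{t}$ for $t\ge 1$ (and simply take $r=q$ on all of $[0,t]$ for $t\le 1$). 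On $[0,t/2]$ the bound $t-s\ge t/2$ lets me invoke the $L^{1}$--$L^{q}$ smoothing of Lemma~\ref{lem:CGL_Lp-Lq}, producing the prefactor $t^{-\frac{n}{2}(1-1/q)}$ times $\int_{0}^{t/2}(1+s)^{-\frac{n}{2}(p-1)}\,ds$, which converges precisely because $p>1+2/n$ forces $\frac{n}{2}(p-1)>1$. On $[t/2,t]$ I would instead use $r=q$ (pure $L^{q}$ boundedness of the semigroup), so that $\int_{t/2}^{t}(1+s)^{-\frac{np}{2}(1-\frac{1}{pq})}\,ds\le C(1+t)^{1-\frac{np}{2}(1-\frac{1}{pq})}$ combines with the weight $(1+t)^{\frac{n}{2}(1-1/q)}$ to a bounded quantity, again exactly under $p\ge 1+2/n$. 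This yields a nonlinear contribution of size at most $CR^{p}$, so $\norm{\Phi(u)}_{*}\le C_{0}\varepsilon+CR^{p}$ with $\varepsilon:=\norm{u_{0}}_{1}+\norm{u_{0}}_{\infty}$.

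The contraction estimate is entirely parallel: the Lipschitz-type hypothesis on $f$ and H\"{o}lder's inequality give $\norm{f(u(s))-f(v(s))}_{r}\le K(\norm{u(s)}_{pr}^{p-1}+\norm{v(s)}_{pr}^{p-1})\norm{u(s)-v(s)}_{pr}$, so repeating the estimates above produces $\norm{\Phi(u)-\Phi(v)}_{*}\le CR^{p-1}\norm{u-v}_{*}$ for $u,v\in B_{R}$. Choosing $R:=2C_{0}\varepsilon$ and then $\varepsilon_{0}$ small enough that $C(2C_{0}\varepsilon_{0})^{p-1}\le\tfrac12$, both the self-mapping bound $\norm{\Phi(u)}_{*}\le C_{0}\varepsilon+CR^{p}\le R$ and the contraction bound $CR^{p-1}\le\tfrac12$ hold for $\varepsilon\le\varepsilon_{0}$, so $\Phi$ has a unique fixed point $u\in B_{R}$ with $\norm{u}_{*}=M\le R=2C_{0}(\norm{u_{0}}_{1}+\norm{u_{0}}_{\infty})$, which is \eqref{eq:P_decay_small}.

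It remains to check that $u\in X$ and that it is unique in $X$. Continuity of $t\mapsto u(t)$ into $L^{1}$ on $[0,+\infty)$ and into $L^{\infty}$ on $(0,+\infty)$, together with the boundedness encoded in $\norm{u}_{*}<+\infty$, follows from the strong continuity of the CGL semigroup and continuity of the Duhamel integral. Uniqueness in the whole class $X$ (not merely in $B_{R}$) is obtained by a Gr\"{o}nwall argument on finite intervals: for two solutions $u,v\in X$ the difference satisfies $\norm{u(t)-v(t)}_{1}\le C\int_{0}^{t}(\norm{u(s)}_{\infty}^{p-1}+\norm{v(s)}_{\infty}^{p-1})\norm{u(s)-v(s)}_{1}\,ds$, and since $u,v$ are bounded in $L^{\infty}$ and agree at $t=0$, Gr\"{o}nwall's inequality forces $u=v$. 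I expect the main obstacle to be the nonlinear self-mapping estimate: arranging the interplay between the space-exponent smoothing and the time decay so that a \emph{single} argument covers every $q\in[1,+\infty]$ (including the endpoints $q=1,+\infty$) and both the small-time and large-time regimes, where the dyadic-in-time splitting and the sharp role of the Fujita condition $p>1+2/n$ are essential.
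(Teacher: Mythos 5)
Your proposal is correct and is exactly the ``standard contraction argument for \eqref{I}'' that the paper itself invokes and omits (cf.\ \cite[Appendix A]{Kusaba-Ozawa}): a fixed point in a ball of the time-weighted space $\norm{\,\cdot\,}_{*}$, with the $\left[ 0, t/2 \right]$/$\left[ t/2, t \right]$ splitting of the Duhamel term, $L^{1}$--$L^{q}$ smoothing on the first piece and $L^{q}$-boundedness on the second, the condition $p>1+2/n$ making both time integrals admissible, and Gr\"{o}nwall for uniqueness in all of $X$. No discrepancies with the paper's intended argument.
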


Since we can prove the above lemma by the standard contraction argument to the integral equation \eqref{I}, we omit the proof.

\begin{lem} \label{lem:P_weight_small}
	Let $p>1+2/n$ and let $m \in \Z_{>0}$.
	Let $u_{0} \in \left( L_{m}^{1} \cap L^{\infty} \right) \left( \R^{n} \right)$ satisfy $\norm{u_{0}}_{1} + \norm{u_{0}}_{\infty} \leq \varepsilon_{0}$ and let $u \in X$ be the global solution to \eqref{P} given in Lemma \ref{lem:P_SDGE}.
	Then,
	\begin{align}
		\label{eq:P_weight_small}
		\sup_{t>0} \left( 1+t \right)^{- \frac{m}{2}} \norm{\abs{x}^{m} u \left( t \right)}_{1} \leq C \left( \norm{\abs{x}^{m} u_{0}}_{1} + \norm{u_{0}}_{1} + \norm{u_{0}}_{\infty} \right)
	\end{align}
	for some $C>0$ independent of $u_{0}$.
\end{lem}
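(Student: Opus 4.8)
The plan is to argue by induction on $m$, paralleling the proof of Theorem~\ref{th:P_weight} but now tracking the dependence on $B \coloneqq \norm{\abs{x}^{m} u_{0}}_{1} + \norm{u_{0}}_{1} + \norm{u_{0}}_{\infty}$. Throughout I would use the equivalence $\norm{\abs{x}^{m} \varphi}_{1} \asymp \sum_{\abs{\alpha} =m} \norm{x^{\alpha} \varphi}_{1}$ (with dimensional constants) and work with $A_{m}(t) \coloneqq \sum_{\abs{\alpha} =m} \norm{x^{\alpha} u(t)}_{1}$. By Lemma~\ref{lem:P_SDGE} the solution obeys $\norm{u(s)}_{q} \le M (1+s)^{-\frac{n}{2}(1-\frac{1}{q})}$ with $M \le C(\norm{u_{0}}_{1} + \norm{u_{0}}_{\infty}) \le CB$; since $\norm{u_{0}}_{1} + \norm{u_{0}}_{\infty} \le \varepsilon_{0}$, the factor $M^{p-1}$ is a bounded (indeed small) constant, so quantities such as $M^{p} = M \cdot M^{p-1}$ remain $O(B)$. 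Finiteness of $A_{m}(t)$ for every $t$ is already guaranteed by Theorem~\ref{th:P_weight}, so no approximation will be needed. The base case $m=0$ is exactly $\norm{u(t)}_{1} \le M \le CB$.

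For the inductive step, assume \eqref{eq:P_weight_small} holds up to order $m-1$. Since $u_{0} \in L^{1}_{m}$ and, by Theorem~\ref{th:P_weight}, $f(u(s)) \in L^{1}_{m}$ for each $s$ (because $\abs{x}^{m} \abs{f(u(s))} \le K \norm{u(s)}_{\infty}^{p-1} \abs{x}^{m} \abs{u(s)}$), I can apply the \emph{exact} commutation identity of Theorem~\ref{th:CGL_commutator} to \eqref{I}, avoiding the $w_{j,\varepsilon}$-approximation used in Section~\ref{sec:proof_weight}, to write, for $\abs{\alpha}=m$,
\[
	x^{\alpha} u(t) = e^{t \nu \Delta} x^{\alpha} u_{0} + R_{\alpha}(t) u_{0} + \int_{0}^{t} \Bigl( e^{(t-s)\nu\Delta} x^{\alpha} f(u(s)) + R_{\alpha}(t-s) f(u(s)) \Bigr)\, ds.
\]
Taking $L^{1}$ norms, summing over $\abs{\alpha}=m$, and using $\norm{e^{t\nu\Delta}\,\cdot\,}_{1} \le \norm{G_{\nu}}_{1}\norm{\,\cdot\,}_{1}$ together with the commutator estimate of Theorem~\ref{th:CGL_commutator_esti}, I would bound the two $u_{0}$-terms by $C(1+t)^{m/2}B$, estimating $\norm{\abs{x}^{m-1} u_{0}}_{1} \le \norm{u_{0}}_{1} + \norm{\abs{x}^{m} u_{0}}_{1}$. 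The diagonal Duhamel term contributes $\norm{G_{\nu}}_{1} \int_{0}^{t} \sum_{\abs{\alpha}=m}\norm{x^{\alpha} f(u(s))}_{1}\, ds \le C M^{p-1}\int_{0}^{t} (1+s)^{-\frac{n}{2}(p-1)} A_{m}(s)\, ds$, which is the Grönwall feedback whose kernel is integrable because $p>1+2/n$.

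The remaining term $\int_{0}^{t} \sum_{\abs{\alpha}=m}\norm{R_{\alpha}(t-s) f(u(s))}_{1}\, ds$ is the crux. Applying Theorem~\ref{th:CGL_commutator_esti} to $\varphi = f(u(s))$, the low power $(t-s)^{1/2}$ multiplies $\norm{\abs{x}^{m-1} f(u(s))}_{1} \le K \norm{u(s)}_{\infty}^{p-1}\norm{\abs{x}^{m-1} u(s)}_{1}$, which by the induction hypothesis carries the growth $(1+s)^{(m-1)/2}B$, while the high power $(t-s)^{m/2}$ multiplies only $\norm{f(u(s))}_{1} \le K\norm{u(s)}_{\infty}^{p-1}\norm{u(s)}_{1}$, which carries no growth in $s$. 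Inserting the decay rates of Lemma~\ref{lem:P_SDGE}, each resulting time integral has the form $\int_{0}^{t}(t-s)^{j/2}(1+s)^{-\frac{n}{2}(p-1)+\ell/2}\, ds$ with $(j,\ell) \in \{(1,m-1),(1,0),(m,0)\}$; bounding $(t-s)^{j/2} \le t^{j/2}$ and $(1+s)^{\ell/2} \le (1+t)^{\ell/2}$ and extracting these factors leaves the convergent tail $\int_{0}^{\infty}(1+s)^{-\frac{n}{2}(p-1)}\, ds$, so in every case the contribution is $\le C M^{p-1}(1+t)^{m/2}B$, the exponents $j/2+\ell/2$ never exceeding $m/2$. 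Collecting all terms yields $A_{m}(t) \le C(1+t)^{m/2}B + C M^{p-1}\int_{0}^{t}(1+s)^{-\frac{n}{2}(p-1)} A_{m}(s)\, ds$, and the Grönwall lemma (integrable kernel, nondecreasing forcing $(1+t)^{m/2}B$) gives $A_{m}(t) \le C(1+t)^{m/2}B$, which is \eqref{eq:P_weight_small} after converting back to the $\abs{x}^{m}$ weight.

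The main obstacle is the bookkeeping of the \emph{linear} dependence on $B$: one must consistently exploit $M \le CB$ while keeping $M^{p-1}$ bounded by smallness of the data, so that the nonlinear contributions stay $O(B)$, and one must check the precise pairing in the commutator remainder term so that the highest power $(t-s)^{m/2}$ meets only a factor with no temporal growth. Beyond this, every time integral reduces, after extracting its polynomial-in-$t$ part, to the convergent $\int_{0}^{\infty}(1+s)^{-\frac{n}{2}(p-1)}\,ds$ furnished by $p>1+2/n$; no analytic input beyond Theorems~\ref{th:P_weight}, \ref{th:CGL_commutator}, \ref{th:CGL_commutator_esti} and Lemma~\ref{lem:P_SDGE} is required.
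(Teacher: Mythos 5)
Your argument is correct, but it takes a genuinely different --- and heavier --- route than the paper's. The paper proves this lemma without induction and without any commutator machinery: it multiplies \eqref{I} by $\abs{x}^{m}$, applies the elementary weighted semigroup bound \eqref{eq:CGL_weight} (which costs the full $\norm{\abs{x}^{m} \varphi}_{1}$ rather than $\norm{\abs{x}^{m-1} \varphi}_{1}$) to both the free and Duhamel terms, and runs Gr\"{o}nwall on the normalized quantity $\left( 1+t \right)^{- \frac{m}{2}} \norm{\abs{x}^{m} u \left( t \right)}_{1}$: the top-order term $\norm{u \left( s \right)}_{\infty}^{p-1} \norm{\abs{x}^{m} u \left( s \right)}_{1}$ is simply kept as feedback against the integrable kernel $\left( 1+s \right)^{- \frac{n}{2} \left( p-1 \right)}$, while $\left( t-s \right)^{\frac{m}{2}} \norm{f \left( u \left( s \right) \right)}_{1}$ is bounded forcing of size $M^{p}$. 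Theorem \ref{th:P_weight} enters only to guarantee $u \in C \left( \left[ 0, + \infty \right); L^{1}_{m} \left( \R^{n} \right) \right)$, i.e.\ the a priori finiteness that allows Gr\"{o}nwall to be applied without the $w_{j, \varepsilon}$-approximation --- exactly the same use you make of it. Your version instead re-runs the Section \ref{sec:proof_weight} induction scheme with the exact identity of Theorem \ref{th:CGL_commutator} and the estimate of Theorem \ref{th:CGL_commutator_esti}, using the induction hypothesis to turn the $\left( t-s \right)^{\frac{1}{2}} \norm{\abs{x}^{m-1} f \left( u \left( s \right) \right)}_{1}$ term into admissible forcing of size $\left( 1+t \right)^{\frac{m}{2}} B$; your bookkeeping (only the diagonal Duhamel term is feedback, $M \leq CB$, $M^{p-1} \leq C \varepsilon_{0}^{p-1}$, all exponents capped at $m/2$) is sound, so the proof goes through. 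The comparison is instructive: the refinement bought by the commutator estimate --- $\norm{\abs{x}^{m-1} \varphi}_{1}$ in place of $\norm{\abs{x}^{m} \varphi}_{1}$ --- is essential only when finiteness of the weighted norm is not yet known, which is the situation of Theorem \ref{th:P_weight}; once that theorem is available, the cruder bound \eqref{eq:CGL_weight} already closes the Gr\"{o}nwall loop, which is why the paper's proof is a few lines while yours needs an induction. Your route buys nothing additional here, though it tracks the linear dependence on $B$ equally well.
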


\begin{proof}
	Let $C$ denote a positive constant independent of $u_{0}$ and $t$ which may change line to line.
	It follows from Theorem \ref{th:P_weight} that $u \in C \left( \left[ 0, + \infty \right); L^{1}_{m} \left( \R^{n} \right) \right)$.
	By using \eqref{I}, \eqref{eq:CGL_weight}, and \eqref{eq:P_decay_small}, we have
	\begin{align*}
		\left( 1+t \right)^{- \frac{m}{2}} \norm{\abs{x}^{m} u \left( t \right)}_{1}
		&\leq \left( 1+t \right)^{- \frac{m}{2}} \norm{\abs{x}^{m} e^{t \nu \Delta} u_{0}}_{1} + \left( 1+t \right)^{- \frac{m}{2}} \int_{0}^{t} \norm{\abs{x}^{m} e^{\left( t-s \right) \nu \Delta} f \left( u \left( s \right) \right)}_{1} ds \\
		&\leq C \left( 1+t \right)^{- \frac{m}{2}} \left( \norm{\abs{x}^{m} u_{0}}_{1} +t^{\frac{m}{2}} \norm{u_{0}}_{1} \right) \\
		&\hspace{1cm} +C \left( 1+t \right)^{- \frac{m}{2}} \int_{0}^{t} \norm{u \left( s \right)}_{\infty}^{p-1} \left( \norm{\abs{x}^{m} u \left( s \right)}_{1} + \left( t-s \right)^{\frac{m}{2}} \norm{u \left( s \right)}_{1} \right) ds \\
		&\leq C \left( \norm{\abs{x}^{m} u_{0}}_{1} + \norm{u_{0}}_{1} \right) \\
		&\hspace{1cm} +CM^{p-1} \int_{0}^{t} \left( 1+s \right)^{- \frac{n}{2} \left( p-1 \right)} \left( \left( 1+s \right)^{- \frac{m}{2}} \norm{\abs{x}^{m} u \left( s \right)}_{1} +M \right) ds
	\end{align*}
	for any $t>0$.
	Applying the Gr\"{o}nwall lemma yields
	\begin{align*}
		\left( 1+t \right)^{- \frac{m}{2}} \norm{\abs{x}^{m} u \left( t \right)}_{1}
		&\leq C \left( \norm{\abs{x}^{m} u_{0}}_{1} + \norm{u_{0}}_{1} +M \right) \exp \left( CM^{p-1} \int_{0}^{t} \left( 1+s \right)^{- \frac{n}{2} \left( p-1 \right)} ds \right) \\
		&\leq C \left( \norm{\abs{x}^{m} u_{0}}_{1} + \norm{u_{0}}_{1} + \norm{u_{0}}_{\infty} \right) \exp \left( C \varepsilon_{0}^{p-1} \right).
	\end{align*}
\end{proof}

Now, we show Proposition \ref{pro:P_small_optimal}.

\begin{proof}[Proof of Proposition \ref{pro:P_small_optimal}]
	By Lemmas \ref{lem:P_SDGE} and \ref{lem:P_weight_small}, and the uniqueness of global solutions to \eqref{P} in $X$, $u$ satisfies \eqref{eq:P_decay_small} and \eqref{eq:P_weight_small} where $m$ is replaced by $m+1$ provided that $\norm{u_{0}}_{1} + \norm{u_{0}}_{\infty} \leq \varepsilon_{0}$ with $u_{0} = \varepsilon \varphi$, namely, $\varepsilon \leq \varepsilon_{0} / \left( \norm{\varphi}_{1} + \norm{\varphi}_{\infty} \right)$.
	Moreover, we have
	\begin{align*}
		\sum_{\substack{\beta +2 \gamma = \alpha \\ \abs{\gamma} \leq \left( m+1 \right) /2}} \frac{\abs{\nu}^{\abs{\gamma}}}{\gamma !} \abs{\MM_{\beta} \left( \psi_{\abs{\gamma}} \right)} &\leq C \sum_{\substack{\beta +2 \gamma = \alpha \\ \abs{\gamma} \leq \left( m+1 \right) /2}} \int_{0}^{+ \infty} s^{\abs{\gamma}} {\,} \lVert x^{\beta} f \left( u \left( s \right) \right) \rVert_{1} {\,} ds \\
		&\leq C \sum_{\substack{\beta +2 \gamma = \alpha \\ \abs{\gamma} \leq \left( m+1 \right) /2}} \int_{0}^{+ \infty} s^{\abs{\gamma}} \norm{u \left( s \right)}_{\infty}^{p-1} \lVert x^{\beta} u \left( s \right) \rVert_{1} {\,} ds \\
		&\leq C \left( \norm{\lvert x \rvert^{m+1} {\,} u_{0}}_{1} + \norm{u_{0}}_{1} + \norm{u_{0}}_{\infty} \right)^{p} \int_{0}^{+ \infty} \left( 1+s \right)^{- \frac{n}{2} \left( p-1 \right) + \frac{m+1}{2}} ds \\
		&\leq C \varepsilon^{p} \left( \norm{\lvert x \rvert^{m+1} {\,} \varphi}_{1} + \norm{\varphi}_{1} + \norm{\varphi}_{\infty} \right)^{p},
	\end{align*}
	whence follows
	\begin{align*}
		&\Biggl\lvert \MM_{\alpha} \left( u_{0} \right) + \sum_{\substack{\beta +2 \gamma = \alpha \\ \abs{\gamma} \leq \left( m+1 \right) /2}} \frac{\left( - \nu \right)^{\abs{\gamma}}}{\gamma !} \MM_{\beta} \left( \psi_{\abs{\gamma}} \right) \Biggr\rvert \\
		&\hspace{1cm} \geq \abs{\MM_{\alpha} \left( u_{0} \right)} - \sum_{\substack{\beta +2 \gamma = \alpha \\ \abs{\gamma} \leq \left( m+1 \right) /2}} \frac{\abs{\nu}^{\abs{\gamma}}}{\gamma !} \abs{\MM_{\beta} \left( \psi_{\abs{\gamma}} \right)} \\
		&\hspace{1cm} \geq \varepsilon \abs{\MM_{\alpha} \left( \varphi \right)} -C \varepsilon^{p} \left( \norm{\lvert x \rvert^{m+1} {\,} \varphi}_{1} + \norm{\varphi}_{1} + \norm{\varphi}_{\infty} \right)^{p} \\
		&\hspace{1cm} = \varepsilon \abs{\MM_{\alpha} \left( \varphi \right)} \left( 1- \frac{C \left( \norm{\lvert x \rvert^{m+1} {\,} \varphi}_{1} + \norm{\varphi}_{1} + \norm{\varphi}_{\infty} \right)^{p}}{\abs{\MM_{\alpha} \left( \varphi \right)}} \varepsilon^{p-1} \right).
	\end{align*}
	Thus, by taking account into the assumption that $\MM_{\alpha} \left( \varphi \right) \neq 0$ and setting
	\begin{align*}
		\varepsilon_{\varphi} \coloneqq \min \left\{ \frac{\varepsilon_{0}}{\norm{\varphi}_{1} + \norm{\varphi}_{\infty}}, {\ } \left( \frac{\abs{\MM_{\alpha} \left( \varphi \right)}}{2C \left( \norm{\lvert x \rvert^{m+1} {\,} \varphi}_{1} + \norm{\varphi}_{1} + \norm{\varphi}_{\infty} \right)^{p}} \right)^{\frac{1}{p-1}} \right\},
	\end{align*}
	we obtain
	\begin{align*}
		\Biggl\lvert \MM_{\alpha} \left( u_{0} \right) + \sum_{\substack{\beta +2 \gamma = \alpha \\ \abs{\gamma} \leq \left( m+1 \right) /2}} \frac{\left( - \nu \right)^{\abs{\gamma}}}{\gamma !} \MM_{\beta} \left( \psi_{\abs{\gamma}} \right) \Biggr\rvert \geq \frac{1}{2} \varepsilon \abs{\MM_{\alpha} \left( \varphi \right)} >0
	\end{align*}
	for any $\varepsilon \in \left( 0, \varepsilon_{\varphi} \right]$.
	This in turn implies $\AA_{m+1} \left( 1 \right) - \AA_{m} \left( 1 \right) \not\equiv 0$ by virtue of Theorem \ref{th:P_asymptotics_optimal}.
\end{proof}


\end{document}